\newtheorem{theorem}{Theorem}[section] 
\newtheorem{lemma}[theorem]{Lemma}
\newtheorem{proposition}[theorem]{Proposition}
\theoremstyle{definition}
\newtheorem{definition}[theorem]{Definition}
\newtheorem{example}[theorem]{Example}
\theoremstyle{remark}
\newtheorem{remark}[theorem]{Remark}
\theoremstyle{conjecture}
\newtheorem{conjecture}[theorem]{Conjecture}
\numberwithin{equation}{section}
\newcommand{\BD}{\mathcal {BD}}
\newcommand{\ch}{{\rm ch} }
\newcommand{\C}{ \mathbb C }
\newcommand{\Coe}{ {\rm Coeff} }
\newcommand{\End}{{\rm End}}
\newcommand{\fa}{ \mathfrak a }
\newcommand{\fB}{ \mathfrak B }
\newcommand{\fG}{ \mathfrak G }
\newcommand{\fL}{ \mathfrak L }
\newcommand{\fp}{ \mathfrak p }
\newcommand{\fock}{{\mathbb H}_X}
\newcommand{\Hn}{H^*(\Xn)}
\newcommand{\la}{{\lambda}}
\newcommand{\lambsq}{s(\lambda)}
\newcommand{\MD}{\mathcal {MD}}
\newcommand{\N}{ \mathbb N }
\newcommand{\On}{{\mathcal O}_X^{[n]}}
\newcommand{\Q}{ \mathbb Q }
\newcommand{\qBD}{{\rm q}{\mathcal {BD}}}
\newcommand{\qMD}{{\rm q}{\mathcal {MD}}}
\newcommand{\qMZV}{ {\bf qMZV} }
\newcommand{\T}{ \mathbb T }
\newcommand{\Tr}{ {\rm Tr} }
\newcommand{\vac}{|0\rangle}
\newcommand{\w}{\tilde}
\newcommand{\W}{\widetilde}
\newcommand{\Wb}{ {\bf W} }
\newcommand{\Xn}{ {X^{[n]}}}
\newcommand{\Z}{ \mathbb Z }
\def\beq{\begin{equation}}
\def\eeq{\end{equation}}
\numberwithin{equation}{section}
\begin{document}

\title[Equivariant Chern character operators and Okounkov's conjecture]
      {Equivariant Chern character operators and Okounkov's conjecture}

\author[Mazen M. Alhwaimel]{Mazen M. Alhwaimel}
\address{Department of Mathematics, College of Science, Qassim University, 
         P. O. Box 6644, Buraydah 51452, Saudi Arabia} 
\email{4200@qu.edu.sa}

\author[Zhenbo Qin]{Zhenbo Qin}
\address{Department of Mathematics, University of Missouri, 
         Columbia, MO 65211, USA} 
\email{qinz@missouri.edu}

\date{\today}
\keywords{Hilbert schemes of points on surfaces; equivariant cohomology; 
equivariant Chern character operators; ring of symmetric functions; 
Jack symmetric functions; transformed Macdonald symmetric functions; 
Heisenberg operators.} 
\subjclass[2020]{Primary 14C05; Secondary 11B65, 17B69.}

\begin{abstract}
In this paper, we study the Chern character operators 
on the equivariant cohomology of the Hilbert schemes of points 
in the complex affine plane $\C^2$ with the action of 
the torus $\T = (\C^*)^2$, and partially verify 
Okounkov's Conjecture \cite[Conjecture~2]{Oko} in this setting. 
Our main idea is to apply the connection between 
the equivariant cohomology of these Hilbert schemes and 
the ring of symmetric functions, via the deformed vertex operators of 
Cheng and Wang \cite{CW}, (the integral form of) 
the Jack symmetric functions and  
the transformed Macdonald symmetric functions \cite{GH, Hai}.
\end{abstract}

\maketitle

\section{\bf Introduction} 
\label{sect_intr}

It is well-known \cite{Bri, Fog, Iar} that Hilbert schemes of points on a smooth algebraic surface are irreducible and smooth. 
In \cite{Oko}, Okounkov introduced some reduced generating series 
for the intersection numbers among the Chern characters of 
the tautological bundles over the Hilbert schemes of points on 
a smooth algebraic surface and the total Chern classes of 
the tangent bundles of these Hilbert schemes, 
and conjectured that they are multiple $q$-zeta values which are $q$-deformations of the usual (multiple) zeta values 
\cite{Bac, BK1, BK2, Bra1, Bra2, OT, Zhao, Zud}. 
This conjecture has been investigated and partially verified in 
\cite{Alh, AQ, Car1, Car2, CO, Qin2, QY, Zhou}. 
The basic idea in these papers is to interpret 
the reduced series in terms of the Chern character operators 
\cite{LQW1} which act on the cohomology of 
the Hilbert schemes via cup products. The Chern character operators 
have played a pivotal role in determining the cohomology ring structures 
of the Hilbert schemes of points on surfaces 
\cite{Leh, LQW1, LQW2, LQW4, Qin1}. 

In this paper, we study the Chern character operators and 
Okounkov's Conjecture in the equivariant setting which generalizes 
the setting in \cite{Car1, Car2} and 
coincides with the setting in \cite{CO, Nak2, OP1, OP2, Zhou}. 
Let $X = \C^2$ be the complex affine plane and $\T = (\C^*)^2$. 
Let $u, v$ be the standard coordinate functions on $X$. 
Define the action of $\T$ on $X$ by 
\begin{eqnarray}  \label{Intro_TT08250207X}
(a, b) \cdot (u, v)=(a^{-1}u, b^{-1}v), \quad (a, b) \in \T.
\end{eqnarray}
The origin $x \in X = \mathbb C^2$ is the only fixed point. 
The action of $\T$ 
on $X$ lifts to an action of $\T$ on the Hilbert scheme $\Xn$ 
which parametrizes length-$n$ $0$-dimensional closed subschemes of $X$.
The $\T$-fixed points of $\Xn$ are supported on the origin $x \in X$ and
indexed by partitions $\lambda$ of $n$. We use $\xi_{\lambda}$
to denote the fixed point in $\Xn$ corresponding to
a partition $\lambda = (\la_1 \ge \la_2 \ge \cdots \ge \la_k > 0)$ of $n$, 
and adopt the convention that $\xi_{\lambda}$ is defined by the ideal 
\begin{eqnarray}  \label{Intro_VV06141023Z}
  I_{\xi_\la} 
= \big \langle u^{\la_1}, u^{\la_2}v, \ldots, u^{\la_k}v^{k-1},v^k \big \rangle.
\end{eqnarray}
Put $H^*_\T({\rm pt}) = \C[t_1, t_2]$ and $H^*_{\mathbb T}(\cdot)^{\prime} = H^*_{\mathbb T}(\cdot)
\otimes_{\mathbb C[t_1, t_2]} \mathbb C(t_1, t_2)$. Define
\begin{eqnarray} 
\mathbb H_X' = \bigoplus_{n=0}^{+\infty} H_\T^*(\Xn)^{\prime}. \label{Intro_def_HX'}
\end{eqnarray}
By the localization theorem, the equivariant classes $[\xi_\la]$ form 
a linear basis of $\mathbb H_X'$. 

The tautological rank-$n$ vector bundle $\On$ over $\Xn$ 
is $\T$-equivariant. The fiber of $\On$ over the fixed point 
$\xi_\la \in \Xn$ is canonically 
\begin{eqnarray}   \label{Intro_VV0614941Z}
\On |_{\xi_\lambda} = H^0 \big ( \mathcal O_{\xi_\la} \big ) 
= \C[u, v]/\big \langle u^{\la_1}, u^{\la_2}v, \ldots, 
  u^{\la_k}v^{k-1},v^k \big \rangle.
\end{eqnarray}
It follows that as $\T$-modules, we have
\begin{eqnarray} \label{Intro_TT08251244X}
\On |_{\xi_\lambda} 
=\bigoplus\limits_{\square\in D_\lambda}
 \theta_1^{-a'(\square)} \theta_2^{-\ell'(\square)}
\end{eqnarray}
where $\theta_1$ and $\theta_2$ are the two standard $1$-dimensional  
$\T$-modules, and $a'(\square)$ (respectively, $\ell'(\square)$) 
is the arm colength (respectively, the leg colength) of the cell $\square$ 
in the Young diagram $D_\la$ of the partition $\la$.

\begin{definition}   \label{Intro_DefFG}
Let $k \ge 0$. The {\it equivariant Chern character operator} 
$\fG(t_1, t_2) \in \End(\fock')$ 
and the {\it $k$-th equivariant Chern character operator} 
$\fG_k(t_1, t_2) \in \End(\fock')$ are defined by putting
\begin{eqnarray}  
   \fG(t_1, t_2) [\xi_\la] 
= \ch^\T\big ( \mathcal O_X^{[|\la|]} \big ) \cup [\xi_\la]
     = \sum_{\square\in D_\lambda} 
     e^{-a'(\square)t_1 - \ell'(\square)t_2} \cdot [\xi_\la],  
     \qquad\qquad \label{Intro_DefFG.1}   \\
   \fG_k(t_1, t_2) [\xi_\la] 
= \ch_k^\T\big ( \mathcal O_X^{[|\la|]} \big ) \cup [\xi_\la]
     = \sum_{\square\in D_\lambda} \frac{(-1)^k}{k!} 
     (a'(\square)t_1 + \ell'(\square)t_2)^k \cdot [\xi_\la]  
     \quad \label{Intro_DefFG.2}
\end{eqnarray}
for every partition $\la$, and by putting
$\fG(t_1, t_2)|_{H_\T^*(X^{[0]})^{\prime}} = 0 = 
\fG_k(t_1, t_2)|_{H_\T^*(X^{[0]})^{\prime}}$. 
\end{definition}

Our goal is to study the equivariant Chern character operators 
$\fG_k(t_1, t_2) \in \End(\fock')$. 
It is known \cite{LQW4, OP2, Qin1, Vas} that $\fock'$ is 
linearly isomorphic to $\mathfrak F \otimes_\C \C(t_1, t_2)$ 
where $\mathfrak F$ is the Fock space  
freely generated over $\C$ by commuting creation operators 
$\fa_{-k}, k \in \Z_{>0}$ acting on the vacuum vector $\vac$. 
The annihilation operators $\fa_{k}, k \in \Z_{>0}$ satisfy 
$\fa_{k} \vac = 0$, and obey the commutation relations
\begin{eqnarray}  \label{Intro_TT09110234X}
[\fa_k, \fa_{\ell}] = k \delta_{k, -\ell} \cdot \text{\rm Id}.
\end{eqnarray}
The Fock space $\mathfrak F$ can be further identified with 
$\Lambda_\C$ which denotes the ring (over $\C$) of symmetric functions 
in countably many independent variables $x_1, x_2, \ldots$. 
The following deformed vertex operator is introduced in \cite{CW}:
\begin{eqnarray}  \label{Intro_CW1}
   V(z; q, t, \w q, \w t) 
= \exp\left (\sum_{k \ge 1} (q^k - \w q^k) \frac{z^k}{k}\fa_{-k} \right )
  \exp\left (\sum_{k \ge 1} (\w t^k - t^k) \frac{z^{-k}}{k}\fa_k \right ).
\end{eqnarray}
Let $V_0(z; q, t, \w q, \w t) = \Coe_{z^0} V(z; q, t, \w q, \w t)$ 
be the $0$-mode of $V(z; q, t, \w q, \w t)$, and define
\begin{eqnarray}  \label{Intro_TT10180917Z.0}
  \overline{\fB}(q, t^{-1})
= \frac{1}{(1-q)(1-t^{-1})} \big (1 - V_0(z; t, t^{-1}, 1, qt^{-1})\big ).
\end{eqnarray}
Let $q = e^{\alpha t_0}$ where $\alpha$ is a formal parameter, 
and $t = e^{t_0}$. For $k \ge 0$, let 
\begin{eqnarray}  \label{Intro_OverlinefBkAlpha}
\overline{\fB}_k^{(\alpha)} = \Coe_{t_0^k} \overline{\fB}(q, t^{-1}). 
\end{eqnarray}

\begin{theorem}    \label{Intro_TT10210727X}
If $0 \le k \le 2$, then 
\begin{eqnarray}    \label{Intro_TT10210727X.0}
  \fG_k(t_1, t_2)
= t_2^k t_1^{\delta(\cdot)} 
  \overline{\fB}_k^{(\alpha)}|_{\alpha = -t_1/t_2}
\end{eqnarray}
where $t_1^{\delta(\cdot)} \fa_\la = t_1^{\delta(\la)} \fa_\la 
= t_1^{\ell(\la^-) - \ell(\la^+)} \fa_\la$
for a generalized partition $\la$, and $\la^+$ and $\la^-$ are 
the positive and negative parts of $\la$ respectively.
\end{theorem}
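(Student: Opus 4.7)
The strategy is to reduce the operator identity (\ref{Intro_TT10210727X.0}) to a diagonal eigenvalue computation on the fixed-point basis $\{[\xi_\la]\}$ of $\fock'$, by way of the well-known linear isomorphism $\fock' \cong \Lambda_\C \otimes_\C \C(t_1, t_2)$ from \cite{LQW4, OP2, Qin1, Vas}, under which (after an explicit rescaling by factors of $t_1$ and $t_2$) the equivariant class $[\xi_\la]$ corresponds to the transformed Macdonald symmetric function of \cite{GH, Hai} with parameters $q = e^{\alpha t_0}$, $t = e^{t_0}$, and $\alpha = -t_1/t_2$. In the $t_0 \to 0$ limit this degenerates to the integral-form Jack polynomial $J_\la^{(\alpha)}$ with Jack parameter $\alpha = -t_1/t_2$, which is the standard identification in the equivariant setting.

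Under this dictionary, I would next show that $\overline{\fB}(q, t^{-1})$ from (\ref{Intro_TT10180917Z.0}) acts diagonally in the transformed Macdonald basis with eigenvalue $\sum_{\square \in D_\la} q^{a'(\square)} t^{-\ell'(\square)}$; this is essentially a Bergeron--Garsia/Macdonald eigenvalue identity repackaged through the Cheng--Wang vertex operator $V_0(z; t, t^{-1}, 1, qt^{-1})$. Writing the eigenvalue as
\begin{eqnarray*}
\sum_{\square \in D_\la} q^{a'(\square)} t^{-\ell'(\square)}
= \sum_{k \ge 0} \frac{t_0^k}{k!} \sum_{\square \in D_\la} (\alpha a'(\square) - \ell'(\square))^k,
\end{eqnarray*}
then extracting the coefficient of $t_0^k$, specializing $\alpha = -t_1/t_2$, and multiplying by $t_2^k$ yields $\sum_{\square} \frac{(-1)^k}{k!}(a'(\square) t_1 + \ell'(\square) t_2)^k$, which is exactly the eigenvalue of $\fG_k(t_1, t_2)$ on $[\xi_\la]$ prescribed by (\ref{Intro_DefFG.2}).

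To lift this diagonal match to an operator identity on all of $\fock'$, I would invoke the $t_1^{\delta(\cdot)}$ twist: the Cheng--Wang Heisenberg of (\ref{Intro_TT09110234X}) normalizes each $\fa_{\pm k}$ with unit weight, whereas the equivariant Heisenberg inherited from Nakajima's geometric construction on $\fock'$ rescales creation and annihilation operators by canonical $t_1$-weights. Since $\delta(\la) = \ell(\la^-) - \ell(\la^+)$ measures exactly the net number of creation minus annihilation operators in a Heisenberg monomial $\fa_\la$, the operator $t_1^{\delta(\cdot)}$ is precisely the intertwiner between the two normalizations, and combining it with the eigenvalue match above yields the claimed identity.

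The main obstacle is rigorously setting up this dictionary between the geometric fixed-point basis on $\fock'$ and the algebraic Macdonald basis on $\Lambda_\C$, and carefully tracking the $t_1^{\delta(\cdot)}$ rescaling through every step. The restriction $0 \le k \le 2$ is expected because the low-order coefficients $\overline{\fB}_0^{(\alpha)}$, $\overline{\fB}_1^{(\alpha)}$, $\overline{\fB}_2^{(\alpha)}$ admit explicit closed-form expressions in terms of the first few Heisenberg brackets derived from the expansion of $V_0(z; t, t^{-1}, 1, qt^{-1})$, whereas higher $k$ would require finer structural information about $\overline{\fB}(q, t^{-1})$ that the present vertex-operator techniques do not immediately yield.
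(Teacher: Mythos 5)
Your overall architecture matches the paper's: identify $[\xi_\la]$ with a rescaled Jack polynomial ${\bf J}^\la(t_1,t_2) = t_2^{|\la|}t_1^{\ell(\cdot)}J_\la^{(\alpha)}|_{\alpha=-t_1/t_2}$, prove that $\overline{\fB}(q,t^{-1})$ is diagonal on the (plethystically substituted) Macdonald basis with eigenvalue $\sum_{\square}q^{a'(\square)}t^{-\ell'(\square)}$, and then transport the eigenvalue statement through the $t_1^{\delta(\cdot)}$ twist. The diagonalization step and the twist bookkeeping are both done in the paper (Lemma~\ref{TT10180917Z} and the proof of Theorem~\ref{TT10210727X}), and your description of them is essentially right, if compressed.

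However, there is a genuine gap at the step where you pass from the eigenvalue identity for $\overline{\fB}(q,t^{-1})$ to the eigenvalue identity for $\overline{\fB}_k^{(\alpha)}$. You write that one simply expands the eigenvalue as $\sum_k \frac{t_0^k}{k!}\sum_\square(\alpha a'(\square)-\ell'(\square))^k$ and "extracts the coefficient of $t_0^k$." But the eigenfunction itself depends on $t_0$: the identity
$\overline{\fB}(q,t^{-1})\,J_\la(X;q,t) = \big(\sum_\square q^{a'(\square)}t^{-\ell'(\square)}\big)J_\la(X;q,t)$ with $q=e^{\alpha t_0}$, $t=e^{t_0}$ has $J_\la(X;q,t)$, not $J_\la^{(\alpha)}$, on both sides. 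Taking $\Coe_{t_0^k}$ therefore produces cross terms $\overline{\fB}_i^{(\alpha)}$ applied to the order-$j$ coefficients ($i+j=k$) of the expansion of $(1-t)^{-|\la|}J_\la(X;q,t)$ in $t_0$, and these do not vanish for free. The paper's Lemma~\ref{TT10210857Z} resolves this by an iterative peeling argument: the $k=0$ terms cancel exactly as power series because $\overline{\fB}_0^{(\alpha)}$ is the degree operator; the $k=1$ case follows by dividing by $t_0$ and letting $t_0\to 0$; and the $k=2$ case requires the nontrivial input \eqref{TT10210857Z.4}, namely that the first-order $t_0$-correction to $(1-t)^{-|\la|}J_\la(X;q,t)$ is the \emph{scalar} multiple $\frac{n(\la')\alpha+n(\la)}{2}J_\la^{(\alpha)}$, which is deduced from the Macdonald duality $J_\la(X;q^{-1},t^{-1})=(-1)^{|\la|}q^{-n(\la')}t^{-n(\la)-|\la|}J_\la(X;q,t)$. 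This is also where the restriction $k\le 2$ truly comes from: for $k=3$ one would need the second-order correction to be a scalar multiple of $J_\la^{(\alpha)}$, which fails (or is at least not available). Your stated explanation for the restriction --- that only the low-order modes of the vertex operator have closed forms --- misses this point; the expansion of $\overline{\fB}(q,t^{-1})$ to any order is mechanical (cf.\ Lemma~\ref{LMAfBy1y2}), and the obstruction lies entirely in the $t_0$-dependence of the eigenvector.
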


We refer to Definition~\ref{partition}~(i) for the concepts and notations 
related to generalized partitions. When $t_1 = -t_2 = 1$, the operators 
$\fG_k(t_1, t_2)$ have been determined in \cite{LQW3, Car1, Car2}.
When $k = 0$, formula \eqref{Intro_TT10210727X.0} 
yields the trivial identity 
\begin{eqnarray}    \label{Intro_G0t1t2}
  \fG_0(t_1, t_2)
= \sum_{i>0} \fa_{-i} \fa_{i}.
\end{eqnarray}
When $k = 1$, formula \eqref{Intro_TT10210727X.0} recovers 
\begin{eqnarray}    \label{Intro_OP3DefMqt1t2}
  \fG_1(t_1, t_2)
= \frac{1}{2} \sum_{i, j>0}  (t_1t_2 \fa_{-i}\fa_{-j}\fa_{i+ j}
  - \fa_{-i - j}\fa_{i}\fa_j)
  - (t_1+t_2)  \sum_{i>0} \frac{i-1}{2} \fa_{-i} \fa_{i}
\end{eqnarray} 
which has been obtained by \cite[Theorem~1]{OP1} (setting $q = 0$ there) 
and is known as the equivariant boundary operator following \cite{Leh}. 
Using \eqref{Intro_TT10210727X.0}, we obtain an explicit formula for 
$\fG_2(t_1, t_2)$ in \eqref{BB02241101Z}.
For $k \ge 3$, it is unlikely that \eqref{Intro_TT10210727X.0} holds. 
On the other hand, a direct computation reveals that 
$t_2^k t_1^{\delta(\cdot)} 
\overline{\fB}_k^{(\alpha)}|_{\alpha = -t_1/t_2}$ equals
$$
\sum_{\ell(\la) = k + 2, \, |\la| = 0} (-1)^{\ell(\la^+)-1} 
(t_1t_2)^{\ell(\la^-)-1} \frac{\fa_\la}{\la^!} 
+ \sum_{\ell(\la) < k + 2, \, |\la| = 0} b^{(k)}_\la \frac{\fa_\la}{\la^!}
$$
where $b^{(k)}_\la \in \Q(t_1, t_2)$. 
Together with the formula of the Chern character operators in 
the non-equivariant setting (see Theorem~\ref{char_th}), we propose 
the following conjecture regarding the leading term of $\fG_k(t_1, t_2)$ 
(in other words, our conjecture asserts that $\fG_k(t_1, t_2)$ and 
$t_2^k t_1^{\delta(\cdot)} 
\overline{\fB}_k^{(\alpha)}|_{\alpha = -t_1/t_2}$ 
have the same leading term).

\begin{conjecture}  \label{Intro_Conj_EquivGk}
Let $k \ge 0$. Then, $\fG_k(t_1, t_2)$ is equal to
\begin{eqnarray}   \label{Intro_Conj_EquivGk.01}
\sum_{\ell(\la) = k + 2, \, |\la| = 0} (-1)^{\ell(\la^+)-1} 
(t_1t_2)^{\ell(\la^-)-1} \frac{\fa_\la}{\la^!} 
+ \sum_{\ell(\la) < k + 2, \, |\la| = 0} g^{(k)}_\la \frac{\fa_\la}{\la^!}
\end{eqnarray} 
where $g^{(k)}_\la \in \Q(t_1, t_2)$, and $\la^+$ and $\la^-$ are 
the positive and negative parts of $\la$. 
\end{conjecture}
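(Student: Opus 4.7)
The plan is to decompose the conjecture into two parts: first, bounding the Nakajima length of $\fG_k(t_1, t_2)$ by $k+2$; and second, identifying the exact length-$(k+2)$ coefficients. The length bound should follow from a bigrading argument combining the cohomological degree (raised by $k$ after extracting powers of $t_1, t_2$) with the weight-zero condition $|\la| = 0$: together these force $\ell(\la) \le k + 2$ for any generalized partition $\la$ that can appear in the Heisenberg expansion of $\fG_k(t_1, t_2)$. One concrete way to establish the bound is to extend Lehn's non-equivariant commutator calculation for $[\fG_k, \fa_m]$ to the equivariant setting and check that the length estimate survives turning on $t_1, t_2$; since the model operator $t_2^k t_1^{\delta(\cdot)} \overline{\fB}_k^{(\alpha)}|_{\alpha = -t_1/t_2}$ obeys the same bound by the direct computation already recorded in the paper, this part is essentially a structural, not a numerical, statement.

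For the top-length coefficients, the strategy is to compute the matrix entry $\langle \fa_\la \vac, \fG_k(t_1, t_2) \fa_\mu \vac \rangle$ with $\ell(\la) = k + 2$ by passing through the fixed-point basis. By Definition~\ref{Intro_DefFG}, $\fG_k(t_1, t_2)$ acts diagonally on $[\xi_\nu]$ with eigenvalue $\sum_\square \tfrac{(-1)^k}{k!}(a'(\square)t_1 + \ell'(\square)t_2)^k$. The change of basis between $\{[\xi_\nu]\}$ and $\{\fa_\nu \vac\}$ is given, via the Nakajima--Vasserot correspondence, by (integral) Jack polynomials in the parameter $\alpha = -t_1/t_2$. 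The key simplification at top length is that only the ``most off-diagonal'' configurations contribute, so the Jack transition coefficient degenerates to a product of power sums, and the computation reduces to matching a combinatorial identity involving the content polynomial $(a'(\square)t_1 + \ell'(\square)t_2)^k$ against the same top-length expression that the vertex operator expansion of $\overline{\fB}_k^{(\alpha)}$ produces. Since the vertex-operator side is already pinned down in the paragraph following Conjecture~\ref{Intro_Conj_EquivGk}, the comparison is the content of the argument.

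The principal obstacle I anticipate is extracting the top-length coefficient of $\fG_k$ in closed form uniformly in $k$. A promising route is an induction on $k$ via commutation with Heisenberg operators: compute $[\fG_k, \fa_m]$ both geometrically (by the equivariant extension of Lehn's approach to Chern character operators) and from the conjectured formula, and show that the two agree on the length-$(k+1)$ part. The base cases $k \le 2$ are supplied exactly by Theorem~\ref{Intro_TT10210727X}, so an induction would transfer the top-length agreement to higher $k$, provided one can prove a structural lemma certifying that the discrepancy between $\fG_k(t_1, t_2)$ and the model operator $t_2^k t_1^{\delta(\cdot)} \overline{\fB}_k^{(\alpha)}|_{\alpha = -t_1/t_2}$, after bracketing with $\fa_m$, lives in Nakajima length strictly less than $k+1$ and therefore cannot contaminate the length-$(k+2)$ part at the next stage. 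Proving this structural lemma, which amounts to a precise compatibility between the geometric and vertex-operator commutator expansions at the top two lengths, is the step I expect to be most delicate.
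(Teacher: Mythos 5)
The statement you are addressing is stated in the paper as a \emph{conjecture}: the authors do not prove it, and they explicitly remark that for $k \ge 3$ the identity $\fG_k(t_1,t_2) = t_2^k t_1^{\delta(\cdot)}\overline{\fB}_k^{(\alpha)}|_{\alpha=-t_1/t_2}$ is unlikely to hold, so that only the agreement of leading terms is asserted, and only the cases $k\le 2$ (Theorem~\ref{Intro_TT10210727X}) are actually established. Your submission is therefore not being measured against an existing argument but must stand on its own, and as written it does not: it is a research plan whose two essential steps are each deferred to statements you acknowledge you cannot yet prove. Concretely, the length bound $\ell(\la)\le k+2$ does not follow from the bigrading argument you sketch. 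In the localized equivariant cohomology $\fock'$ the coefficients $g^{(k)}_\la$ are allowed to be arbitrary elements of $\Q(t_1,t_2)$ (the conjecture itself only asserts this), so negative powers of $t_1,t_2$ can absorb any cohomological-degree deficit and no degree count on $\ch_k^\T(\On)$ by itself constrains $\ell(\la)$. To get the bound one would need either an equivariant extension of Lehn's commutator computation or an integrality statement (that $\fG_k(t_1,t_2)$ is defined over $\C[t_1,t_2]$ with controlled degrees) followed by specialization; you name the first option but do not carry it out, and the observation that the \emph{model} operator $t_2^k t_1^{\delta(\cdot)}\overline{\fB}_k^{(\alpha)}$ satisfies the bound says nothing about $\fG_k$ itself, since the whole point of the conjecture is that these two operators are expected to differ in lengths $<k+2$ and are not known to agree at length $k+2$.

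The second step --- identifying the top-length coefficients --- is the actual content of the conjecture, and your sketch does not supply it. The claim that ``only the most off-diagonal configurations contribute, so the Jack transition coefficient degenerates to a product of power sums'' is unsupported: the expansion of $J_\la^{(\alpha)}$ in the power-sum basis has no known closed form of the kind you would need, and extracting the length-$(k+2)$ part of $\fG_k$ from the diagonal action \eqref{Intro_DefFG.2} through this change of basis is precisely the computation that the authors were able to complete only for $k\le 2$ (via the transformed Macdonald functions and a delicate limit in Lemma~\ref{TT10210857Z}, which visibly requires one new subtraction-and-divide step for each increment of $k$). Your proposed induction via $[\fG_k,\fa_m]$ founders on the same point: there is no recursion expressing $\fG_{k+1}$ in terms of $\fG_k$ and Heisenberg operators (the derivative structure of Definition~\ref{Intro_DefEquivaDeri} recursively differentiates $\fa_{-n}$, not $\fG_k$), so the ``structural lemma'' you defer to is not a technical compatibility check but the open problem itself. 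In short, the proposal identifies reasonable angles of attack but proves neither the length bound nor the leading coefficients, and the conjecture remains open.
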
 

The main idea in the proof of Theorem~\ref{Intro_TT10210727X} is 
to exploit the connection between the Fock space $\fock'$ and 
the ring $\Lambda_\C$ of symmetric functions. Using the transformed 
Macdonald symmetric functions \cite{CW, GH, Hai, Mac}, we show in 
Lemma~\ref{TT10180917Z} that the integral form $J_\la(X; q, t)$ of 
the Macdonald symmetric function is an eigenfunction 
of the operator $\overline{\fB}(q, t^{-1})$ with eigenvalue 
$\sum_{\square\in D_\lambda} q^{a'(\square)}t^{-\ell'(\square)}$.

An important feature of Theorem~\ref{Intro_TT10210727X} is the connection 
between the equivariant Chern character operators $\fG_k(t_1, t_2)$ 
where $0 \le k \le 2$ and vertex operators through \eqref{Intro_CW1}, 
\eqref{Intro_TT10180917Z.0} and \eqref{Intro_OverlinefBkAlpha}. 
It leads to two interesting applications. The first is to use 
Theorem~\ref{Intro_TT10210727X} to investigate 
Okounkov's Conjecture in our equivariant setting. 
Okounkov's reduced generating series is defined by
\begin{eqnarray}   
  \langle \ch_{k_1} \cdots \ch_{k_N} \rangle_{m, t_1, t_2}'
= \frac{\langle \ch_{k_1} \cdots \ch_{k_N} 
   \rangle_{m, t_1, t_2}}{\langle \rangle_{m, t_1, t_2}}       
   \label{Intro_Reduced.1}
\end{eqnarray}
where $k_1, \ldots, k_N \ge 0$, $m \in \Z$, and via localization 
(see Remark~\ref{rmk_trace}~(ii) and (i)), 
\begin{eqnarray}   
   {\langle \ch_{k_1} \cdots \ch_{k_N} \rangle_{m, t_1, t_2}}
= \Tr_{\fock'} q^{\mathfrak n} \, 
       \Gamma_-(1)^{m+t_1+t_2} \, \Gamma_+(1)^{m/(t_1t_2)} \prod_{j=1}^N 
       \fG_{k_j}(t_1, t_2).     \label{Intro_trace.0100}   
\end{eqnarray}
In \eqref{Intro_trace.0100}, $\mathfrak n$ is 
the number-of-points operator 
(i.e., $\mathfrak n|_{H^*(X^{[n]})'_\T} =  n \, {\rm Id}$), and 
\beq   \label{Intro_GammaLz}
\Gamma_\pm(z) 
= \exp \left ( \sum_{k > 0} \frac{z^{\mp k}}{k} \fa_{\pm k} \right ).
\eeq
\cite[Conjecture~2]{Oko} states that 
$\langle \ch_{k_1} \cdots \ch_{k_N} \rangle_{m, t_1, t_2}'$ is 
a multiple $q$-zeta value in $\qMZV$ of weight at most 
$\sum_{i=1}^N (k_i+2)$. We refer to Section~\ref{sect_qMZV} for 
the definitions and notations of multiple $q$-zeta values 
and the $\Q$-algebras $\qMZV \subset \qBD \subset \BD$. 
Our next result partially verifies Okounkov's Conjecture in 
the equivariant setting.

\begin{theorem}     \label{Intro_BB070340X}
\begin{enumerate}
\item[{\rm (i)}]
Let $k_1, \ldots, k_N$ be nonnegative integers. Then, 
$$
\langle \ch_{k_1} \cdots \ch_{k_N} \rangle_{0, t_1, t_2}' 
\in \qBD[t_1, t_2]
$$
is a degree-$\sum_{i=1}^N k_i$ symmetric homogeneous polynomial in 
$t_1$ and $t_2$ whose coefficients have weights at most 
$\sum_{i=1}^N (k_i+2)$.

\item[{\rm (ii)}]
Let $k \ge 0$. Then, Okounkov's Conjecture \cite[Conjecture~2]{Oko} 
holds for $\langle \ch_{k} \rangle_{0, t_1, t_2}'$.
More precisely, $\langle \ch_{k} \rangle_{0, t_1, t_2}' 
\in \qMZV[t_1, t_2]$ is a degree-$k$ symmetric homogeneous polynomial in 
$t_1$ and $t_2$ whose coefficients have weights at most 
$(k+2)$, and the coefficient of $t_1^k$ (and $t_2^k$) in 
$\langle \ch_{k} \rangle_{0, t_1, t_2}'$ is equal to 
$$
(-1)^k \cdot [k+2] + \big (\text{terms with weights $< (k+2)$} \big ).
$$

\item[{\rm (iii)}] 
If $k_1, \ldots, k_N \in \{0, 1, 2\}$, then 
$
\langle \ch_{k_1} \cdots \ch_{k_N} \rangle_{m, t_1, t_2}'
\in \BD(t_1, t_2)[m]
$
with both degree (in $m$) and weight at most $\sum_{i=1}^N (k_i + 2)$.
\end{enumerate}
\end{theorem}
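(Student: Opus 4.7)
The uniform approach for all three parts is the trace formula \eqref{Intro_trace.0100}: once each $\fG_{k_j}(t_1,t_2)$ is expanded as a polynomial in the Heisenberg operators $\fa_m$, the trace over $\fockprime$ against $q^{\mathfrak n}\,\Gamma_-(1)^{m+t_1+t_2}\,\Gamma_+(1)^{m/(t_1t_2)}$ unwinds via Wick-type pairings into a linear combination of $q$-series of the form $\sum_{n\ge 1} n^a q^{bn}/(1-q^n)^c$ and their higher-depth analogues. These are precisely the building blocks of $\BD$, $\qBD$, and $\qMZV$, and dividing by $\langle\rangle_{m,t_1,t_2}$ is the standard connected-part reduction that replaces $\BD$ by $\qBD$ in the quotient.

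For part~(iii), the hypothesis $k_j\in\{0,1,2\}$ makes each $\fG_{k_j}$ fully explicit by Theorem~\ref{Intro_TT10210727X} (namely \eqref{Intro_G0t1t2}, \eqref{Intro_OP3DefMqt1t2} and the formula \eqref{BB02241101Z} for $\fG_2$). I would carry out the contractions by moving $\Gamma_+(1)^{m/(t_1t_2)}$ to the right and $\Gamma_-(1)^{m+t_1+t_2}$ to the left through the $\fG_{k_j}$ using the commutation identities $[\fa_k,\Gamma_+(1)^c]\propto c\,\Gamma_+(1)^c$ for $k<0$ (and the mirror identity for $\Gamma_-$). Each commutator absorbs one factor of $m/(t_1t_2)$ or $m+t_1+t_2$, while the total Heisenberg length of the product is bounded by $\sum(k_j+2)$; this simultaneously caps the $m$-degree and the $\BD(t_1,t_2)$-weight of the trace by $\sum(k_j+2)$.

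For parts~(i) and~(ii), the specialization $m=0$ kills $\Gamma_+(1)^{m/(t_1t_2)}=\id$ and the trace collapses to $\Tr_{\fockprime}q^{\mathfrak n}\,\Gamma_-(1)^{t_1+t_2}\prod_j\fG_{k_j}(t_1,t_2)$. I would compute in the localization basis $\{[\xi_\la]\}$: by Lemma~\ref{TT10180917Z} the integral Macdonald functions $J_\la(X;q,t)$ diagonalize $\overline{\fB}(q,t^{-1})$ with eigenvalue $\sum_{\square\in D_\la}q^{a'(\square)}t^{-\ell'(\square)}$, and the specialization in \eqref{Intro_TT10210727X.0} turns each $\fG_{k_j}$ into an operator whose diagonal entries in this basis are $\sum_{\square\in D_\la}(-a'(\square)t_1-\ell'(\square)t_2)^{k_j}/k_j!$. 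The factor $\Gamma_-(1)^{t_1+t_2}$ then supplies the off-diagonal deformation that converts the partition sum $\sum_\la q^{|\la|}(\cdot)$ into genuine $q$-zeta sums. Symmetry in $t_1\leftrightarrow t_2$ follows from partition conjugation swapping arms and legs; homogeneity of total $(t_1,t_2)$-degree $\sum k_j$ is immediate from \eqref{Intro_DefFG.2}; and the weight bound $\sum(k_j+2)$ reflects the leading Heisenberg length $k_j+2$ predicted by Conjecture~\ref{Intro_Conj_EquivGk}. In the one-point case of part~(ii), the cell sum reduces to a single Eisenstein-like series in $n^k q^n/(1-q^n)$ whose $t_1^k$-coefficient is $(-1)^k[k+2]$ modulo strictly lower-weight $\qMZV$ corrections.

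The principal obstacle is that Conjecture~\ref{Intro_Conj_EquivGk} is still a conjecture, so for $k\ge 3$ the subleading content $g^{(k)}_\la\,\fa_\la/\la^!$ of $\fG_k(t_1,t_2)$ is not yet known exactly. To still deduce parts~(i) and~(ii) in full generality, I would replace that conjectural input by the weaker \emph{a priori} bound that every Heisenberg monomial appearing in $\fG_k(t_1,t_2)$ has length $\ell(\la)\le k+2$ --- a direct consequence of the equivariant cohomological degree of $\ch_k^\T(\mathcal O_X^{[n]})$ together with the degree convention on $\fa_m$. This weaker input already suffices to push the Wick-pairing weight estimate through and to yield the claimed $\qBD[t_1,t_2]$ membership; the stronger leading-term identification of the conjecture is only needed if one wants an exact closed form rather than a sharp bound.
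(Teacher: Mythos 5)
Your plan for parts (i) and (ii) has a genuine gap at exactly the point where the general case $k\ge 3$ must be handled. You propose to replace the unproven Conjecture~\ref{Intro_Conj_EquivGk} by the ``weaker a priori bound'' that every Heisenberg monomial in $\fG_k(t_1,t_2)$ has length $\le k+2$, claiming this follows from equivariant degree considerations. It does not: the operator lives in $\End(\fock')$ with coefficients in $\C(t_1,t_2)$ (see \eqref{BB02061023Z}), and after tensoring with $\C(t_1,t_2)$ the cohomological grading argument that works in the non-equivariant setting (Theorem~\ref{char_th}) breaks down, because coefficients such as $(t_1t_2)^{-1}$ have negative degree and can compensate arbitrarily long monomials. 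The length bound $\ell(\la)\le k+2$ is precisely part of what Conjecture~\ref{Intro_Conj_EquivGk} asserts and is open. The paper avoids this entirely: at $m=0$ the factor $\Gamma_+(1)^{m/(t_1t_2)}$ disappears and, since each $\fG_{k_j}$ has conformal weight $0$, the factor $\Gamma_-(1)^{t_1+t_2}$ also drops out of the trace (see \eqref{BB09031203X}) --- it does \emph{not} ``supply an off-diagonal deformation'' as you assert. The trace then equals $\sum_\la q^{|\la|}\prod_j\sum_{\square\in D_\la}\frac{(-1)^{k_j}}{k_j!}(a'(\square)t_1+\ell'(\square)t_2)^{k_j}$, which depends only on the eigenvalues of $\fG_{k_j}$ (known by definition for all $k_j$), not on any operator expansion. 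Lemma~\ref{TT10180917Z} then rewrites this eigenvalue sum as $\Coe_{w_1^{k_1}\cdots w_N^{k_N}}\Tr q^{\mathfrak n}\prod_j\overline{\fB}(\w q_j,\w t_j^{-1})$, a trace of genuine vertex operators, which is evaluated in closed form via Lemma~\ref{GammaPMComm} and \eqref{Lemma3InCar2}; membership in $\qBD[t_1,t_2]$ (resp.\ $\qMZV[t_1,t_2]$ for $N=1$) and the weight bounds are then read off from the explicit infinite products by citing \cite{Qin3}. Without either this diagonalization step or a proof of the length bound, your argument for (i) and (ii) does not close.

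For part (iii) your mechanism (commuting $\Gamma_\pm(1)$ through the explicit $\fG_0,\fG_1,\fG_2$, with each commutator absorbing one factor of $m$) correctly explains the degree bound in $m$, and the restriction to $k_j\in\{0,1,2\}$ does make Theorem~\ref{Intro_TT10210727X} available, which is the same reduction the paper uses via \eqref{rmk_trace.0}. However, the assertion that the resulting Wick contractions land in $\BD(t_1,t_2)[m]$ with weight at most $\sum(k_i+2)$ is stated rather than proved; in the paper this is the content of Lemma~\ref{BB05270929Z}, Proposition~\ref{BB06111137Z} and Theorem~\ref{BB11250927Z}, where the full trace is computed as an explicit multivariable infinite product and the coefficient-extraction estimates are supplied by \cite[Theorems~1.1 and~1.3]{Qin3}. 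You would need at least that level of explicitness to justify the weight claim.
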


To prove Theorem~\ref{Intro_BB070340X}, we first analyze 
a different reduced series 
$$
[\ch_{k_1} \cdots \ch_{k_N}]_{m, t_1, t_2}'
$$ 
which is similarly defined as in \eqref{Intro_Reduced.1} and 
\eqref{Intro_trace.0100} but with $\fG_{k_j}(t_1, t_2)$ being replaced 
by $t_2^{k_j} t_1^{\delta(\cdot)} 
\overline{\fB}_{k_j}^{(\alpha)}|_{\alpha = -t_1/t_2}$. 
Applying Theorem~\ref{Intro_TT10210727X}, we draw the conclusions about 
$\langle \ch_{k_1} \cdots \ch_{k_N} \rangle_{m, t_1, t_2}'$ from 
those about $[\ch_{k_1} \cdots \ch_{k_N}]_{m, t_1, t_2}'$.

The second application of the vertex operator feature in 
Theorem~\ref{Intro_TT10210727X} is to determine 
the equivariant higher order derivative of $\fa_{-n}$ with $n > 0$. 

\begin{definition}  \label{Intro_DefEquivaDeri}
Let $\mathfrak f \in \End(\fock')$.
For $k \ge 0$, the (equivariant) derivative $\mathfrak f^{(k)}$ is defined inductively by putting $\mathfrak f^{(0)} = \mathfrak f$ and 
$
\mathfrak f^{(k)} = [\fG_1(t_1, t_2), \mathfrak f^{(k-1)}] 
                      \text{ for $k \ge 1$}.
$
\end{definition}

Applying the commutation relation between the operators 
$\Gamma_+(z)$ and $\Gamma_-(y)$, we derive a closed formula 
(Theorem~\ref{EquivHigherDeriv}) for the derivative $\fa_{-n}^{(k)}$ 
where $n > 0$ and $k \ge 0$. This closed formula is still very complicated, 
but provides an improvement for the non-equivariant setting 
where no closed formula for the derivatives of the Heisenberg operators 
is known (see Theorem~\ref{deriv_th}). 

Finally, the paper is organized as follows. Section~\ref{sect_qMZV} is 
devoted to a quick review of multiple $q$-zeta values. 
Section~\ref{sect_Hilbert} reviews the basics of Hilbert schemes of 
points on surfaces, and the derivatives of the Heisenberg operators 
and the Chern character operators in the non-equivariant setting. 
We prove Theorem~\ref{Intro_TT10210727X} (= Theorem~\ref{TT10210727X}) 
and Theorem~\ref{Intro_BB070340X} (= Theorem~\ref{BB070340X}) in 
Section~\ref{sect_Equivariant} and Section~\ref{sect_Okounkov} respectively. 
In Section~\ref{sect_HigherDeri}, a closed formula for 
the derivative $\fa_{-n}^{(k)}$ with $n > 0$ and $k \ge 0$ is obtained. 

\medskip\noindent
{\bf Convention.} Every cohomology is of $\C$-coefficients unless otherwise specified, and 
$$
(a)_\infty = (a; q)_\infty = \prod_{n=0}^\infty (1-aq^n).
$$


\section{\bf Multiple $q$-zeta values} 
\label{sect_qMZV}

In this section, we recall basic materials related 
to multiple $q$-zeta values.
The following definitions are from \cite{BK1, BK3, Oko}.

\begin{definition}  \label{def_qMZV}
Let $\N = \{1,2,3, \ldots\}$, and fix a subset $S \subset \N$. 
\begin{enumerate}
\item[{\rm (i)}]
Let $Q = \{Q_s(t)\}_{s \in S}$ where each $Q_s(t) \in \Q[t]$ is a polynomial 
with $Q_s(0) = 0$ and $Q_s(1) \ne 0$. For $s_1, \ldots, s_\ell \in S$ 
with $\ell \ge 1$, define
$$
Z_Q(s_1, \ldots, s_\ell)
= \sum_{n_1 > \cdots > n_\ell \ge 1} \prod_{i=1}^\ell 
  \frac{Q_{s_i}(q^{n_i})}{(1-q^{n_i})^{s_i}}
\in \Q[[q]].
$$
Put $Z_Q(\emptyset) = 1$, and define $Z(Q, S)$ to be the $\Q$-linear span 
of the set
$$
\{Z_Q(s_1, \ldots, s_\ell)| \, \ell \ge 0 \text{ and }
s_1, \ldots, s_\ell \in S \}.
$$

\item[{\rm (ii)}]
Define $\mathcal {MD} = Z(Q^E, \N)$ where $Q^E = \{Q_s^E(t)\}_{s \in \N}$
and 
$$
Q_s^E(t) = \frac{tP_{s-1}(t)}{(s-1)!}
$$ 
with $P_s(t)$ being the Eulerian polynomial defined by 
\begin{eqnarray}   \label{def_qMZV.01}
\frac{tP_{s-1}(t)}{(1-t)^s} = \sum_{d=1}^\infty d^{s-1}t^d.
\end{eqnarray}  
We have $t P_{0}(t) = t$.
For $s > 1$, the polynomial $t P_{s-1}(t)$ has degree $s - 1$. 
For $s_1, \ldots, s_\ell \in \N$ with $\ell \ge 1$, define
\begin{eqnarray}   \label{def_qMZV.02}
[s_1, \ldots, s_\ell] 
= Z_{Q^E}(s_1, \ldots, s_\ell)
= \sum_{n_1 > \cdots > n_\ell \ge 1} \prod_{i=1}^\ell 
  \frac{Q_{s_i}^E(q^{n_i})}{(1-q^{n_i})^{s_i}}.
\end{eqnarray}  
This $q$-series is called a {\it bracket} of depth $\ell$ and of weight 
$\sum_{i=1}^\ell s_i$.

\item[{\rm (iii)}]
Define $\text{q}\mathcal {MD}$ be the subspace of $\mathcal {MD}$ linearly 
spanned by $1$ and all the brackets $[s_1, \ldots, s_\ell]$ with $s_1 > 1$. 

\item[{\rm (iv)}]
Define $\qMZV = Z(Q^O, \N_{>1})$ where 
$Q^O = \{Q_s^O(t)\}_{s \in \N_{>1}}$ with 
$$
  Q_s^O(t) 
= \begin{cases}
  t^{s/2},           &\text{if $s \ge 2$ is even,} \\
  t^{(s-1)/2} (t+1), &\text{if $s \ge 3$ is odd,}
  \end{cases}
$$
and $\N_{>1} = \{2,3,4, \ldots\}$.
For $s_1, \ldots, s_\ell \in \N$ with $\ell \ge 1$, define
$$
Z(s_1, \ldots, s_\ell) = Z_{Q^O}(s_1, \ldots, s_\ell).
$$
\end{enumerate}
\end{definition} 

By the Theorem~2.13 and Theorem~2.14 in \cite{BK1},
$\text{q}\mathcal {MD}$ is a subalgebra of $\mathcal {MD}$, 
and $\mathcal {MD}$ is a polynomial ring over $\text{q}\mathcal {MD}$ 
with indeterminate $[1]$:
\begin{eqnarray}   \label{BK1Thm214}
\mathcal {MD} = \text{q}\mathcal {MD}[\,[1]\,].
\end{eqnarray}
By the Proposition~2.2 and Theorem~2.4 in \cite{BK3}, 
$Z(\{Q_s^E(t)\}_{s \in \N_{>1}}, \N_{>1})$ is a subalgebra of 
$\mathcal {MD}$ as well and 
$\qMZV = Z(\{Q_s^E(t)\}_{s \in \N_{>1}}, \N_{>1})$. Therefore,   
\begin{eqnarray}   \label{BK3-2.4}
\qMZV = Z(\{Q_s^E(t)\}_{s \in \N_{>1}}, \N_{>1}) \subset \qMD \subset \MD
\end{eqnarray} 
are inclusions of $\Q$-algebras.  
For instance, by \cite[Example~2.6]{BK3}, 
\begin{eqnarray}   \label{BK3-2.6}
Z(2) = [2], \quad Z(3) = 2[3], \quad Z(4) = [4] - \frac16 [2].
\end{eqnarray}

Next, we recall bi-brackets from \cite{Bac, BK2}.

\begin{definition}   \label{def-BiBracket}
\begin{enumerate}
\item[{\rm (i)}] 
For integers $\ell \ge 0, s_1, \ldots, s_\ell \ge 1$ and 
$r_1, \ldots, r_\ell \ge 0$, define
$$
  \begin{bmatrix} s_1, \ldots, s_\ell\\r_1, \ldots, r_\ell \end{bmatrix}
= \sum_{\substack{u_1 > \cdots > u_\ell > 0\\v_1, \ldots, v_\ell > 0}}
\frac{u_1^{r_1} \cdots u_\ell^{r_\ell}}{r_1! \cdots r_\ell!} \cdot
\frac{v_1^{s_1-1} \cdots v_\ell^{s_\ell-1}}{(s_1-1)! \cdots (s_\ell-1)!} 
\cdot q^{u_1v_1 + \cdots + u_\ell v_\ell} \, \in \, \Q[[q]].
$$
This $q$-series is called a {\it bi-bracket} of depth $\ell$ and of weight 
$\sum_{i=1}^\ell (s_i +r_i)$.

\item[{\rm (ii)}] 
Define $\BD$ to be the vector space spanned by the bi-brackets 
over $\Q$.

\item[{\rm (iii)}] 
Define $\qBD$ to be the subspace spanned by all the
bi-brackets with $s_1 > 1$. 
%
\end{enumerate}
\end{definition}

It is known that 
$$
  \begin{bmatrix} s_1, \ldots, s_\ell\\0, \ldots, 0 \end{bmatrix}
= [s_1, \ldots, s_\ell].
$$
By \cite[Theorem~A~i)]{Bac}, $\BD$ is a $\Q$-algebra and contains 
$\MD$ as a subalgebra. Similar proofs show that 
$\qBD$ is a subalgebra of $\BD$ and contains $\qMD$ as a subalgebra. 
Together with \eqref{BK3-2.4}, 
we obtain inclusions of $\Q$-algebras:
\begin{eqnarray}   \label{Comm_alg}  
\begin{matrix}
&&\qBD &\subset&\BD   \\
&&\cup &&\cup   \\
\qMZV&\subset &\qMD&\subset&\MD.  
\end{matrix}
\end{eqnarray} 

\section{\bf Hilbert schemes of points on surfaces} 
\label{sect_Hilbert}

In this section, we collect background materials 
\cite{Gro, Leh, LQW1, Nak1, Nak2, Qin1} related 
to Hilbert schemes of points on surfaces, 
generalized partitions, Heisenberg algebras of 
Grojnowski and Nakajima, and the derivatives of the Heisenberg operators 
and the Chern character operators in the non-equivariant setting.

Let $X$ be a smooth projective complex surface,
and $\Xn$ be the Hilbert scheme of $n$ points in $X$. 
An element in $\Xn$ is represented by a
length-$n$ $0$-dimensional closed subscheme $\xi$ of $X$. For $\xi
\in \Xn$, let $I_{\xi}$ be the corresponding sheaf of ideals. It
is well-known that $\Xn$ is smooth. 
Define the universal codimension-$2$ subscheme:
\begin{eqnarray*}
{ \mathcal Z}_n=\{(\xi, x) \subset \Xn\times X \, | \, x\in
 {\rm Supp}{(\xi)}\}\subset \Xn\times X.
\end{eqnarray*}
Denote by $p_1$ and $p_2$ the projections of $\Xn \times X$ to
$\Xn$ and $X$ respectively. Let
\begin{eqnarray*}
\fock = \bigoplus_{n=0}^\infty \Hn
\end{eqnarray*}
be the direct sum of total cohomology groups of the Hilbert schemes $\Xn$.
For $m \ge 0$ and $n > 0$, let $Q^{[m,m]} = \emptyset$ and define
$Q^{[m+n,m]}$ to be the closed subset:
$$\{ (\xi, x, \eta) \in X^{[m+n]} \times X \times X^{[m]} \, | \,
\xi \supset \eta \text{ and } \mbox{Supp}(I_\eta/I_\xi) = \{ x \}
\}.$$

We recall Nakajima's definition of the Heisenberg operators \cite{Nak1}. 
Let $\alpha \in H^*(X)$. Set $\mathfrak a_0(\alpha) =0$.
For $n > 0$, the operator $\mathfrak
a_{-n}(\alpha) \in \End(\fock)$ is
defined by
$$
\mathfrak a_{-n}(\alpha)(a) = \w{p}_{1*}([Q^{[m+n,m]}] \cdot
\w{\rho}^*\alpha \cdot \w{p}_2^*a)
$$
for $a \in H^*(X^{[m]})$, where $\w{p}_1, \w{\rho},
\w{p}_2$ are the projections of $X^{[m+n]} \times X \times
X^{[m]}$ to $X^{[m+n]}, X, X^{[m]}$ respectively. Define
$\mathfrak a_{n}(\alpha) \in \End(\fock)$ to be $(-1)^n$ times the
operator obtained from the definition of $\mathfrak
a_{-n}(\alpha)$ by switching the roles of $\w{p}_1$ and $\w{p}_2$. 
We often refer to $\mathfrak a_{-n}(\alpha)$ (resp. $\mathfrak a_n(\alpha)$) 
as the {\em creation} (resp. {\em annihilation})~operator. 
The following is from \cite{Nak1, Gro}. Our convention of the sign follows \cite{LQW2}.

\begin{theorem} \label{commutator}
The operators $\mathfrak a_n(\alpha)$ satisfy
the commutation relation:
\begin{eqnarray*}
\displaystyle{[\mathfrak a_m(\alpha), \mathfrak a_n(\beta)] = -m
\; \delta_{m,-n} \cdot \langle \alpha, \beta \rangle \cdot {\rm Id}_{\fock}}.
\end{eqnarray*}
The space $\fock$ is an irreducible module over the Heisenberg
algebra generated by the operators $\mathfrak a_n(\alpha)$ with a
highest~weight~vector $\vac=1 \in H^0(X^{[0]}) \cong \C$.
\end{theorem}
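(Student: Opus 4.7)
The plan is to prove the commutation relation geometrically, by writing each composition $\mathfrak{a}_m(\alpha)\,\mathfrak{a}_n(\beta)$ as a cycle class on a product of Hilbert schemes and comparing the two orderings, then to deduce irreducibility via G\"ottsche's Betti number formula.

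First I would set up the composed correspondence. Acting on $H^*(X^{[p]})$, the product $\mathfrak{a}_m(\alpha)\,\mathfrak{a}_n(\beta)$ is the push--pull of an iterated incidence cycle, obtained as the fibre product of two copies of the appropriate $Q^{[\ast,\ast]}$ glued along an intermediate Hilbert scheme, capped against $\alpha$ and $\beta$ pulled back from the two $X$-factors and then pushed down to $X^{[p-m-n]}$, with conventional $(-1)$-signs appearing whenever either exponent is positive. The commutator $[\mathfrak{a}_m(\alpha),\mathfrak{a}_n(\beta)]$ is then represented by the difference of this cycle and the one obtained by swapping $(m,\alpha)$ with $(n,\beta)$ on the common ambient space $X^{[p-m-n]}\times X\times X\times X^{[p]}$.

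Second, in the same-sign cases ($m,n$ both negative or both positive) the two composed correspondences are literally exchanged by the involution swapping the two $X$-factors: adding (resp. removing) a point at $x$ then at $y$ produces the same nested configuration as doing so in the opposite order, away from the diagonal $x=y$, and this diagonal has strictly positive codimension in the fibre product so contributes nothing in cohomology. The signs $(-1)^m(-1)^n$ match, and the commutator vanishes, in agreement with $m\delta_{m,-n}=0$. The mixed-sign case $m>0>n$ is more subtle: the difference of the two composed cycles is supported on the locus where the point removed by $\mathfrak{a}_m$ coincides with a point previously added by $\mathfrak{a}_n$. When $m+n\neq 0$, a dimension count using that the punctual Hilbert scheme $X^{[k]}_x$ has dimension $k-1$ forces this locus to have the wrong expected dimension to contribute, so the commutator again vanishes. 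When $m=-n$ the dimensions match and the contribution is a scalar operator on each $H^*(X^{[p]})$. The main obstacle will be to pin down this scalar precisely: I would reduce to $p=0$, localise to the punctual Hilbert scheme $X^{[m]}_x$ over a single point $x\in X$, perform a local Chern-class computation there, combine with the restriction of $\alpha\times\beta$ to the diagonal $\Delta_X\subset X\times X$ (which recovers the pairing $\langle\alpha,\beta\rangle$), and track the $(-1)^m$ sign from the definition of $\mathfrak{a}_m$ to arrive at $-m\,\langle\alpha,\beta\rangle\,\mathrm{Id}_{\fock}$.

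Finally, for the highest-weight and irreducibility statement, the equality $\mathfrak{a}_n(\alpha)\vac=0$ for $n>0$ is immediate because $H^*(X^{[k]})=0$ for $k<0$, so $\vac$ is a highest weight vector. For irreducibility I would use the commutation relations just established to bring any monomial in the $\mathfrak{a}_n(\alpha_i)$ applied to $\vac$ into a normal-ordered form involving only creation operators, and then compare the Hilbert series of the resulting span with G\"ottsche's formula for $\sum_{n\ge 0}\dim H^*(X^{[n]})\,q^n$; the two match, so the span is all of $\fock$. Simplicity of $\fock$ then follows from the standard fact that a highest weight Fock module over a Heisenberg algebra with nondegenerate pairing is irreducible.
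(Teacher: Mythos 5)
The paper itself offers no proof of this theorem: it is quoted from Nakajima and Grojnowski (with the sign convention of [LQW2]), so there is no in-paper argument to compare yours against. Your outline is a faithful reconstruction of the standard geometric proof from those sources: the composed correspondences on $X^{[p-m-n]}\times X\times X\times X^{[p]}$, commutativity in the same-sign case via the involution swapping the two $X$-factors away from the diagonal, the dimension count (resting on Brian\c{c}on's theorem that the punctual Hilbert scheme $X^{[k]}_x$ has dimension $k-1$, and on the irreducibility of the incidence varieties $Q^{[m+n,n]}$, which you use implicitly) disposing of the mixed-sign case with $m+n\ne 0$, the identification of the $m=-n$ commutator as a universal scalar times $\langle\alpha,\beta\rangle\,\mathrm{Id}_{\fock}$, and irreducibility from normal ordering together with G\"ottsche's formula. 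All of these steps are correct and are exactly how the cited proof proceeds.

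The one place where your proposal is substantially thinner than the actual argument is the evaluation of the scalar. Showing that for $m=-n$ the commutator equals $c_m\,\langle\alpha,\beta\rangle\,\mathrm{Id}_{\fock}$ for some universal constant $c_m$ is the soft part; showing $c_m=-m$ (equivalently $(-1)^{m-1}m$ before the sign twist built into the definition of the annihilation operators) is the hard part, and it is not a routine ``local Chern-class computation'' on $X^{[m]}_x$. Note also that G\"ottsche's formula, which you invoke for irreducibility, only detects that every $c_m$ is nonzero --- the graded dimension of the Fock submodule generated by $\vac$ is independent of the actual nonzero values of the $c_m$ --- so it cannot be used to pin down $|c_m|=m$. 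Historically the precise constant required the Ellingsrud--Str\o mme intersection-number computation on the punctual Hilbert scheme (or an equivalent vertex-operator argument). As written, your sentence ``perform a local Chern-class computation there \dots to arrive at $-m\,\langle\alpha,\beta\rangle\,\mathrm{Id}$'' conceals the only genuinely difficult point of the theorem; everything else in your outline would go through.
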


The Lie bracket in the above theorem is understood in the super
sense according to the parity of the degrees of the
cohomology classes involved. It follows from
Theorem~\ref{commutator} that the space $\fock$ is linearly
spanned by all the Heisenberg monomials $\mathfrak
a_{n_1}(\alpha_1) \cdots \mathfrak a_{n_k}(\alpha_k) \vac$
where $k \ge 0$ and $n_1, \ldots, n_k < 0$.

\begin{definition} \label{VV0614855Z}
\begin{enumerate}
\item[{\rm (i)}]
Let $D_{\lambda}$ denote the Young diagram associated to a partition $\lambda$. 
For a cell $\square$ in $D_{\lambda}$, let $\ell(\square)$, 
$\ell'(\square)$, $a(\square)$ and $a'(\square)$ be respectively
the numbers of cells in $D_{\lambda}$ to the south, north, east and west of 
$\square$ (all excluding $\square$ itself). The numbers $\ell(\square)$ 
and $\ell'(\square)$ are called the {\it leg length} and 
the {\it leg colength} respectively, and $a(\square)$ and $a'(\square)$ 
are called the {\it arm length} and the {\it arm colength} respectively. 
The {\it hook length} $h(\square)$ and the {\it content} $c(\square)$ 
are defined to be $a(\square)+\ell(\square)+1$
and $a'(\square)-\ell'(\square)$ respectively.

\item[{\rm (ii)}] For a partition $\la = (\la_1 \ge \ldots \ge \la_\ell > 0)
= (1^{m_1}2^{m_2}3^{m_3} \cdots)$, define
$$
\mathfrak z_\la = |{\rm Aut}(\la)| \cdot \prod_i \la_i 
= \prod_{i \ge 1} (i^{m_i} m_i!).
$$
\end{enumerate}
\end{definition}

\begin{example}
Let $\la = (5, 5,5,2,1) = (1^12^15^3)$. Its Young diagram $D_\la$
$$
\ytableausetup{centertableaux}
\ytableaushort
{\none,\none,\none \spadesuit \none, \none}
* {5, 5,5,2,1}
$$
contains $18$ cells. For the cell $\spadesuit \in D_\la$, we have 
$\ell(\spadesuit) = 1$, $\ell'(\spadesuit) = 2$, $a(\spadesuit) = 3$, 
$a'(\spadesuit) = 1$,  $h(\spadesuit) = 5$, and $c(\spadesuit) = -1$.
\end{example}

\begin{definition} \label{partition}
\begin{enumerate}
\item[{\rm (i)}]
A {\it generalized partition} \index{partition, generalized} of an integer $n$ is of the form
$$
\lambda = (\cdots  (-2)^{m_{-2}}(-1)^{m_{-1}} 1^{m_1}2^{m_2} \cdots)
$$ 
such that part $i\in \Z$ has multiplicity $m_i$, $m_i \ne 0$ for 
only finitely many $i$'s, and $n = \sum_i i m_i$. Define 
$\ell(\la) = \sum_i m_i$, $|\la| = \sum_i im_i$, 
$\lambda^! = \prod_i m_i!$, 
\begin{eqnarray*}
\lambda^+ &=& (1^{m_1}2^{m_2} \cdots),  \\
\lambda^- &=& (\cdots  (-2)^{m_{-2}}(-1)^{m_{-1}}), \\
-\la &=& (\cdots  (-2)^{m_{2}}(-1)^{m_{1}} 1^{m_{-1}}2^{m_{-2}} \cdots), 
\end{eqnarray*}
$\delta(\la) = \ell(\la^-) - \ell(\la^+)$,
$|\la|_+ = |\la^+| = \sum_{i > 0} im_i$, and $s(\la) = \sum_i i^2m_i$.

\item[{\rm (ii)}] 
Let $\alpha \in H^*(X)$ and $k \ge 1$. Define $\tau_{k*}: H^*(X) \to H^*(X^k)$ 
to be the linear map induced by the diagonal embedding $\tau_k: X \to X^k$, and
$$
(\mathfrak a_{m_1} \cdots \mathfrak a_{m_k})(\alpha)
= \mathfrak a_{m_1} \cdots \mathfrak a_{m_k}(\tau_{*}\alpha)
= \sum_j \mathfrak a_{m_1}(\alpha_{j,1}) \cdots \mathfrak a_{m_k}(\alpha_{j,k})
$$ 
when $\tau_{k*}\alpha = \sum_j \alpha_{j,1} \otimes \cdots 
\otimes \alpha_{j, k}$ via the K\"unneth decomposition of $H^*(X^k)$.

\item[{\rm (iii)}]
For a generalized partition $\lambda = 
\big (\cdots (-2)^{m_{-2}}(-1)^{m_{-1}} 1^{m_1}2^{m_2} \cdots \big )$, define
\begin{eqnarray}
\mathfrak a_{\lambda} 
  &=& \left ( \prod_i (\fa_i)^{m_i} \right ),    \label{partition.1}  \\
\mathfrak a_{\lambda}(\alpha) 
  &=& \left ( \prod_i (\fa_i)^{m_i} \right ) (\alpha)    \label{partition.2}
\end{eqnarray}
where the product $\prod_i (\mathfrak
a_i)^{m_i} $ is understood to be
$\cdots \mathfrak a_{-2}^{m_{-2}} \mathfrak a_{-1}^{m_{-1}}
 \mathfrak a_{1}^{m_{1}} \mathfrak a_{2}^{m_{2}}\cdots$.
%
\end{enumerate}
\end{definition}

For $n > 0$ and a homogeneous class $\alpha \in H^*(X)$, let
$|\alpha| = s$ if $\alpha \in H^s(X)$, and let $G_k(\alpha, n)$ be
the homogeneous component in $H^{|\alpha|+2k}(\Xn)$ of
\begin{eqnarray}    \label{DefOfGGammaN}
 G(\alpha, n) = p_{1*}(\ch({\mathcal O}_{{\mathcal Z}_n}) \cdot p_2^*\alpha
\cdot p_2^*{\rm td}(X) ) \in \Hn
\end{eqnarray}
where $\ch({\mathcal O}_{{\mathcal Z}_n})$ is the Chern
character of ${\mathcal O}_{{\mathcal Z}_n}$
and ${\rm td}(X) $ is the Todd class. We extend the notion $G_k(\alpha,
n)$ linearly to an arbitrary class $\alpha \in H^*(X)$, 
and set $G(\alpha, 0) =0$. 
It was proved in \cite{LQW1} that the cohomology ring of $\Xn$ is
generated by the classes $G_{k}(\alpha, n)$ where $0 \le k < n$
and $\alpha$ runs over a linear basis of $H^*(X)$. 

\begin{definition}   \label{TT12101000Z}
\begin{enumerate}
\item[{\rm (i)}]
For $k \ge 0$ and $\alpha \in H^*(X)$, 
the {\it $k$-th Chern character operator} 
${\mathfrak G}_k(\alpha) \in \End({\fock})$ is the operator acting 
on the $n$-th component $H^*(\Xn)$ by the cup product 
with $G_k(\alpha, n)$. 

\item[{\rm (ii)}]
Let $1_X$ be the fundamental class of the projective surface $X$. 
For $k \ge 0$ and $\mathfrak f \in \End(\mathbb H_X)$, 
define the {\it $k$-th derivative} 
$\mathfrak f^{(k)}$ of $\mathfrak f$ inductively by 
$\mathfrak f^{(0)} = \mathfrak f$ and
$
\mathfrak f^{(k)} = [\fG_1(1_X), \mathfrak f^{(k-1)}]
$ 
for $k \ge 1$. 
\end{enumerate}
\end{definition}

The following two theorems are from \cite{LQW2}.

\begin{theorem} \label{deriv_th}
Let $k \ge 0$, $n \in \Z$, and $\alpha\in H^*(X)$. Then,
$\mathfrak a_n^{(k)}(\alpha)$ is equal to
\begin{eqnarray*}
& &(-n)^k k! \left ( \sum_{\ell(\lambda) = k+1, |\lambda|=n}
   \frac{\fa_{\la}}{\lambda^!}(\alpha) -
   \sum_{\ell(\lambda) = k-1, |\lambda|=n}
   \frac{\lambsq - 1}{24}
   \frac{\fa_{\la}}{\lambda^!}(e_X \alpha) \right )   \\
&+&\sum_{\ell(\lambda) = k, |\lambda|=n}
   f_{1, \lambda} \frac{\fa_{\la}}{\lambda^!}(K_X \alpha)
   + \sum_{\ell(\lambda) = k-1, |\lambda|=n}
   f_{2, \lambda} \frac{\fa_{\la}}{\lambda^!}(K_X^2 \alpha)
\end{eqnarray*}
where all the numbers $f_{1, \lambda}$ and $f_{2, \lambda}$ are independent of 
$X$ and $\alpha$, and $K_X$ and $e_X$ are the canonical class and Euler class 
of $X$ respectively.
\end{theorem}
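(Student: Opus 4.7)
\medskip\noindent\textbf{Proof plan.} The natural approach is induction on $k$. The case $k=0$ is immediate because the only length-$1$ generalized partition of $n$ is $\la = (n)$, for which $\la^! = 1$, so the right-hand side reduces to $\fa_n(\alpha) = \fa_n^{(0)}(\alpha)$; the remaining sums are vacuous when $n \ne 0$, and collapse for $n = 0$ since the prefactor $(-n)^k k!$ vanishes and no $K_X$- or $K_X^2$-term survives at length $0$. The case $k = 1$ is the refined form of Lehn's boundary formula proved in \cite{Leh, LQW1}: the geometric description \eqref{DefOfGGammaN} of $G_1(1_X, n)$ identifies $\fG_1(1_X)$ with the boundary (genus-increment) operator, and the resulting commutator is
\[
\bigl[\,\fG_1(1_X),\, \fa_n(\alpha)\,\bigr] \;=\; -n \!\!\sum_{\ell(\la)=2,\,|\la|=n}\!\! \frac{\fa_\la}{\la^!}(\alpha) \;+\; \bigl(\text{$K_X$-, $K_X^2$-, $e_X$-corrections}\bigr),
\]
which is exactly the statement at $k=1$.

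For the inductive step $k-1 \to k$, I would substitute the level-$(k-1)$ formula into
\[
\fa_n^{(k)}(\alpha) \;=\; \bigl[\,\fG_1(1_X),\, \fa_n^{(k-1)}(\alpha)\,\bigr]
\]
and expand using the super-Leibniz identity
\[
\bigl[\,\fG_1(1_X),\, \fa_\mu(\gamma)\,\bigr] \;=\; \sum_i \pm\, \fa_{\mu_1}\cdots \bigl[\,\fG_1(1_X),\, \fa_{\mu_i}\,\bigr] \cdots \fa_{\mu_{\ell(\mu)}}(\tau_{*}\gamma).
\]
Replacing each inner bracket by the base-case formula produces Heisenberg monomials of lengths $\ell(\mu)+1$, $\ell(\mu)$, and $\ell(\mu)-1$. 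Regrouping by the length of the resulting generalized partition $\la$ sorts these contributions into precisely the four families appearing in the statement, with the insertions $\alpha$, $K_X\alpha$, $e_X\alpha$, and $K_X^2\alpha$ propagating from the corresponding terms at level $k-1$ together with the fresh corrections produced at level $1$.

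The main obstacle is the combinatorial bookkeeping of the leading coefficient $(-n)^k k!$ and the Euler-class coefficient $\tfrac{s(\la)-1}{24}$. For the leading term, each Lehn split contributes a factor $-\la_i$ at position $i$; using $\sum_i \la_i = n$ together with the passage from length-$k$ to length-$(k+1)$ orderings weighted by $1/\la^!$ accounts for the upgrade $(-n)^{k-1}(k-1)! \to (-n)^k k!$, since a length-$(k+1)$ partition has exactly $k$ ways of arising from a length-$k$ one by splitting a single part. For the Euler-class correction, the contribution $\sum_i \la_i^2 = s(\la)$ accumulates one Euler ``self-contraction'' per existing part across the $k$ iterations, while the $-1$ is a boundary subtraction removing the spurious contraction that happens inside a freshly split pair and has already been recorded in the length-$(k-1)$ correction at the previous level. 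The coefficients $f_{1,\la}$ and $f_{2,\la}$ need not be computed explicitly; their independence of $X$ and $\alpha$ follows automatically from the recursion, since the only geometric inputs entering at each step are $K_X$ and $e_X$ from the base case, and $\alpha$ is merely transported through the $\tau_{*}$ diagonals.
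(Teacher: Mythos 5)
First, note that the paper does not prove this statement at all: Theorem~\ref{deriv_th} is quoted verbatim from \cite{LQW2} (``The following two theorems are from \cite{LQW2}''), so there is no in-paper proof to compare against. Your strategy --- induction on $k$ with Lehn's boundary formula as the $k=1$ base case and the derivation property of $[\fG_1(1_X),-]$ for the inductive step --- is in fact the strategy of the original proof in \cite{LQW2}, and your observations that degree reasons on a surface ($H^{>4}(X)=0$, so $e_XK_X=K_X^3=0$) force exactly the four families of correction terms, and that universality of $f_{1,\la}, f_{2,\la}$ propagates through the recursion, are both correct and are genuine ingredients of that argument.

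There is, however, a concrete gap in the inductive step as you have set it up. The Leibniz identity you display, $[\fG_1(1_X),\fa_\mu(\gamma)]=\sum_i\pm\,\fa_{\mu_1}\cdots[\fG_1(1_X),\fa_{\mu_i}]\cdots\fa_{\mu_{\ell(\mu)}}(\tau_*\gamma)$, followed by substitution of the $k=1$ formula, only produces monomials of lengths $\ell(\mu)+1$ and $\ell(\mu)$; it cannot by itself produce the length-$(\ell(\mu)-1)$ terms, and hence cannot produce the Euler-class family at all. Those terms arise only after one normal-orders the substituted monomials $\fa_{\mu_1}\cdots\fa_a\fa_b\cdots\fa_{\mu_\ell}$: moving the annihilation half of a freshly split pair past the remaining factors creates contractions of K\"unneth components of diagonal classes, and it is the self-pairing of the diagonal that injects $e_X$. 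Your prose alludes to ``self-contractions,'' but this mechanism is absent from the formal recursion you wrote down, and consequently the coefficient $\frac{s(\la)-1}{24}$ is asserted (``one self-contraction per part,'' ``boundary subtraction'') rather than derived; pinning down that coefficient is the genuinely hard part of the theorem and is exactly what a complete proof must compute. A smaller imprecision: a length-$(k+1)$ generalized partition arises from $\binom{k+1}{2}$ merge-pairs, not $k$; the identity actually needed for the leading coefficient is $\sum_{i<j}(\la_i+\la_j)=k\sum_i\la_i=kn$, i.e., each part lies in exactly $k$ pairs, which together with the $1/\la^!$ bookkeeping gives the upgrade $(-n)^{k-1}(k-1)!\to(-n)^kk!$.
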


\begin{theorem} \label{char_th}
Let $k \ge 0$ and $\alpha, \beta \in H^*(X)$. 
Then, $\mathfrak G_k(\alpha)$ is equal to
\begin{eqnarray*}
& &- \sum_{\ell(\lambda) = k+2, |\lambda|=0}
   \frac{\fa_{\la}}{\lambda^!}(\alpha)
   + \sum_{\ell(\lambda) = k, |\lambda|=0}
   \frac{\lambsq - 2}{24}
   \frac{\fa_{\la}}{\lambda^!}(e_X \alpha)  \\
&+&\sum_{\ell(\lambda) = k+1, |\lambda|=0} g_{1, \lambda}
   \frac{\fa_{\la}}{\lambda^!}(K_X \alpha)
   + \sum_{\ell(\lambda) = k, |\lambda|=0} g_{2, \lambda}
   \frac{\fa_{\la}}{\lambda^!}(K_X^2 \alpha)
\end{eqnarray*}
where all the numbers $g_{1, \lambda}$ and $g_{2, \lambda}$ are 
independent of $X$ and $\alpha$. 
\end{theorem}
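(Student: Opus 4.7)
The plan is to reduce $\fG_k(\alpha)$ — defined as cup product with the class $G_k(\alpha,n)$ extracted from $\ch(\mathcal O_{\mathcal Z_n})$ — to a normal-ordered polynomial in the Heisenberg operators, then pin down its shape and numerical constants by commuting with creation operators and matching against Theorem~\ref{deriv_th} via a universality argument. First I would set up the ansatz: $\fG_k(\alpha)$ preserves the Hilbert-scheme grading $n$ while shifting cohomological degree by $|\alpha|+2k$, and since $\fock$ is an irreducible Heisenberg module (Theorem~\ref{commutator}), every $n$-preserving operator is a normal-ordered polynomial in the $\fa_m(\gamma)$. The grading constraint forces contributions of the form $\fa_\lambda(\gamma)$ with $|\lambda|=0$, where $\gamma \in H^*(X)$ absorbs the codimension slack. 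A degree count shows the only $\gamma$'s that can appear, built from $\alpha$ and the intrinsic invariants of $X$, are $\alpha$, $e_X\alpha$, $K_X\alpha$, and $K_X^2\alpha$, with respective lengths $\ell(\lambda)$ equal to $k+2$, $k$, $k+1$, $k$. This already dictates the four sums in the statement.

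Next I would invoke universality: the class $G(\alpha,n)$ is built from $\ch(\mathcal O_{\mathcal Z_n})$ and $\mathrm{td}(X)$ by functorial operations, so each coefficient in the Heisenberg expansion is a rational number depending only on the shape of $\lambda$ (and on $k$), not on $X$ or $\alpha$. Thus only the two explicit constants $-1$ and $(\lambsq-2)/24$, together with the two families $g_{1,\lambda}$ and $g_{2,\lambda}$, remain to be identified. To pin them down I would take the commutator with $\fa_{-n}(\beta)$: by Definition~\ref{TT12101000Z}~(ii) iterating $[\fG_1(1_X),-]$ produces the $k$-th derivative, and a transfer-property computation expresses $[\fG_k(\alpha),\fa_{-n}(\beta)]$ in terms of $\fa_{-n}^{(k-1)}(\alpha\beta)$. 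Matching against Theorem~\ref{deriv_th} order by order then forces the leading coefficient $(-n)^{k-1}(k-1)!$ appearing there into the constant $-1$ on the top stratum $\ell(\lambda)=k+2$, while the subleading coefficient $-(\lambsq-1)/24$ from Theorem~\ref{deriv_th}, corrected by the $\mathrm{td}(X)$ contribution to $\ch(\mathcal O_{\mathcal Z_n})$ and the combinatorics of differentiating $\fa_\lambda/\lambda^!$, should collapse into the stated $(\lambsq-2)/24$ on the $e_X\alpha$ stratum.

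The main obstacle will be this last bookkeeping step: converting Grothendieck--Riemann--Roch for $\mathcal O_{\mathcal Z_n}$, which contributes the Todd-class corrections, together with the commutator combinatorics, into precisely the constants $-1$, $(\lambsq-2)/24$, and the universal rationals $g_{1,\lambda}$, $g_{2,\lambda}$. In practice I would verify the two explicit constants by evaluating $\fG_k(1_X)$ on test surfaces where the $K_X$-terms simplify — for instance a K3 surface, where $K_X = 0$ forces the $g_{1,\lambda}$ and $g_{2,\lambda}$ terms to drop out, so the remaining two coefficients can be read off from explicit cup products of $G_k(1_X,n)$ against Heisenberg monomials $\fa_\lambda(\gamma)\vac$, and then cross-check on a second surface with $K_X \ne 0$ (e.g.\ $\mathbb P^2$) to calibrate the families $g_{1,\lambda}$, $g_{2,\lambda}$ in low degrees.
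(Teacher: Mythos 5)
A preliminary caveat: the paper does not prove this statement. It is imported verbatim from \cite{LQW2} (``The following two theorems are from \cite{LQW2}''), so the only meaningful comparison is with the argument there. Your outline does track that argument in broad strokes: an ansatz of normal-ordered Heisenberg monomials $\fa_\la(\gamma)$ with $|\la|=0$ constrained by bidegree, universality of the coefficients, and determination of the operator by its commutators with creation operators, matched against Theorem~\ref{deriv_th}.

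However, the step you dispose of in one sentence is where essentially all the work lies, and your justification for it does not hold up. Irreducibility of $\fock$ plus a degree count does not yield the ansatz. Irreducibility only gives an expansion of a bidegree-preserving operator as a completed, locally finite series of normal-ordered monomials $\fa_\la(\gamma)$ with $|\la|=0$; the bidegree constraint reads $|\gamma| = |\alpha| + 2k + 4 - 2\ell(\la)$, which bounds $\ell(\la)$ by $k+2$ only \emph{after} one knows that $\gamma$ ranges over $\alpha, K_X\alpha, e_X\alpha, K_X^2\alpha$. A priori, monomials of arbitrarily large length paired with classes $\gamma$ of degree below $|\alpha|$ are admissible, and ``functoriality of the construction'' does not by itself exclude them or establish that the coefficients depend only on the shape of $\la$: in \cite{LQW1, LQW2} both facts are the output of the inductive machinery (Lehn's formula for $\fG_1(1_X)$, the vanishing lemmas for the incidence varieties, and induction on $k$), not an a priori input. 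Two further points: the determination step needs $\fG_k(\alpha)\vac = 0$ (true, since $G(\alpha,0)=0$) so that the operator is pinned down by its commutators with all $\fa_{-n}(\beta)$ --- you use this implicitly but never state it; and the transfer property is itself a nontrivial geometric theorem of \cite{LQW1}, not a formal computation, and your version is off by one: the commutator of the top stratum $\ell(\la)=k+2$ of $\fG_k(\alpha)$ with $\fa_{-n}(\beta)$ produces monomials of length $k+1$, which is the leading stratum of $\fa_{-n}^{(k)}(\alpha\beta)$, not of $\fa_{-n}^{(k-1)}(\alpha\beta)$; the identity actually used is $[\fG_k(\alpha), \fa_{-1}(\beta)] = \tfrac{1}{k!}\,\fa_{-1}^{(k)}(\alpha\beta)$ up to normalization. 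Your closing plan to calibrate on a K3 surface and on $\mathbb P^2$ is sensible for the two explicit constants, though note the theorem only asserts that $g_{1,\la}$ and $g_{2,\la}$ are universal, not that they be computed.
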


Via direct computations, Tang \cite[Proposition 5.6]{Tang} proves that 
$\mathfrak G_2(\alpha)$ equals
$$
- \sum_{\ell(\lambda) = 4, |\lambda|=0}
   \frac{\mathfrak a_{\lambda}(\alpha)}{\lambda^!} 
   + \sum_{n > 0} \frac{n^2 - 1}{12}
   \fa_{-n}\fa_n(e_X \alpha)      
$$
\begin{eqnarray}   \label{Tang5.6}
- \sum_{\ell(\lambda) = 3, |\lambda|=0} \frac{|\la^+|-1}{2} 
   \frac{\mathfrak a_{\lambda}(K_X \alpha)}{\la^!}
   - \sum_{n > 0} \frac{(n-1)(2n-1)}{12} \, \fa_{-n}\fa_n(K_X^2 \alpha)
\end{eqnarray}
where $\la^+$ denotes the positive part of a generalized partition 
$\la$. 

To determine the universal numbers $f_{1, \la}, f_{2, \la}, 
g_{1, \la}, g_{2, \la}$ in Theorem~\ref{deriv_th} and Theorem~\ref{char_th},
it suffices to determine the operators $\mathfrak a_n^{(k)}(1_X)$ 
with $n < 0$ and $\fG_k(1_X)$ for smooth projective toric surfaces $X$. 
Since the affine plane $\C^2$ is the basic model for such surfaces,
we will study the equivariant counterparts of these operators 
for the Hilbert schemes $(\C^2)^{[n]}$ in the rest of this paper.

\section{\bf Equivariant Chern character operators for $(\C^2)^{[n]}$} 
\label{sect_Equivariant}

Throughout the rest of the paper, we fix the action 
\eqref{Intro_TT08250207X} of $\T = (\C^*)^2$ on $X = \C^2$.
In this section, we recall the equivariant Heisenberg operators which 
act on the equivariant cohomology of the Hilbert schemes $(\C^2)^{[n]}$, 
and review the connection between this setting and 
the ring of symmetric functions. Then we prove 
Theorem~\ref{Intro_TT10210727X} (= Theorem~\ref{TT10210727X}).

\subsection{\bf Equivariant Heisenberg operators for $(\C^2)^{[n]}$} 
\label{subsect_EquivHei}
$\,$
\medskip

Let $X = \C^2$. Recall from \eqref{Intro_def_HX'} the space $\fock'$ of 
the equivariant cohomology of the Hilbert schemes $\Xn$. 
For the Heisenberg algebra action on $\fock'$, we follow the presentations 
in \cite[Section~2]{OP1} and \cite[Section~2]{OP2}.
By definition, the Fock space $\mathfrak F$ is freely generated over $\C$ 
by commuting creation operators $\fa_{-k}, k \in \Z_{>0}$ 
acting on the vacuum vector $\vac$. The annihilation operators 
$\fa_{k}, k \in \Z_{>0}$ satisfy $\fa_{k} \vac = 0$, 
and obey the commutation relations \eqref{Intro_TT09110234X}.
The Fock space $\mathfrak F$ has a linear basis consisting of the elements 
\begin{eqnarray}  \label{TT09110240X}
  |\la \rangle 
= \frac{1}{\mathfrak z_\la} \fa_{-\la} \vac
= \frac{1}{\mathfrak z_\la} \prod_i \fa_{-\la_i} \cdot \vac
\end{eqnarray}
where $\la$ denotes a (usual) partition with parts $\la_i > 0$,
and the notation $\fa_{-\la}$ is from \eqref{partition.1}. 
Then there is a linear isomorphism 
\begin{eqnarray}  \label{TT09110237X}
\Psi: \quad \mathfrak F \otimes_\C \C(t_1, t_2) \to \fock'
\end{eqnarray}
sending the element $\prod_i \la_i \cdot |\la \rangle$ to 
the equivariant cohomology class associated to 
the $\T$-equivariant closed subvariety 
of $X^{[|\la|]}$ whose generic elements are given by a union of 
closed subschemes of lengths $\la_1, \ldots, \la_{\ell(\la)}$
supported at $\ell(\la)$ distinct points of $X$. 
The vacuum vector $\vac$ corresponds to the unit in 
$H^*_\T(X^{[0]})' = \C(t_1, t_2)$, and 
\begin{eqnarray}  \label{BB02051005Z}
\Psi(|\la \rangle), \quad \la \vdash n
\end{eqnarray}
form a linear basis of $H_\T^*(\Xn)^{\prime}$.

Alternatively, the Heisenberg operators 
$\fp_m(\alpha^\T) \in {\rm End}(\fock')$ 
with $m \in \Z, \alpha^\T \in H^*_\T(X)'$ are defined in \cite{LQW4} 
(see also \cite{Vas} and \cite[Sections~11.2]{Qin1}). 
These Heisenberg operators satisfy the relation
\begin{eqnarray}   \label{Heisenberg}
[\mathfrak p_k(\alpha^\T), \mathfrak p_{\ell}(\beta^\T)]
= k\delta_{k, -\ell} \cdot \langle \alpha^\T, \beta^\T \rangle \cdot \text{\rm Id}
\end{eqnarray}
for $\alpha^\T, \beta^\T \in H^*_\T(X)'$. In \eqref{Heisenberg}, 
$\langle \cdot, \cdot \rangle$ is a nondegenerate bilinear form on $H^*_\T(X)'$. 
For $k > 0$, we see from \cite[Lemma~14.1]{Qin1} that
\begin{eqnarray}   \label{TT09120954Z}
\fa_{-k} = \fp_{-k}(1_X), \quad \fa_{k} = -\fp_{k}([x]) 
\end{eqnarray}
where $1_X = [X]$ and $[x]$ denote the equivariant fundamental classes 
of $X$ and the origin $x \in X = \C^2$ respectively. 
Via the linear isomorphism \eqref{TT09110237X}, 
\eqref{TT09120954Z} enables us to interchange the operators $\fa_n, n \in \Z$
with the operators $\fp_{-k}(1_X), \fp_{k}([x])$ with $k > 0$, or vice versa. 

Let $\Lambda_\C$ be the ring (over $\C$) of symmetric functions in 
countably many 
independent variables $x_1, x_2, \ldots$. Let $p_\la \in \Lambda_\C$ 
be the power symmetric function associated to a partition $\la$,
and $J_\la^{(\alpha)} \in \Lambda_\C \otimes_\C \C(\alpha)$ be 
the integral form of the Jack symmetric function 
depending on the parameter $\alpha$ \cite[(10.22) on p.381]{Mac}. 
The linear map 
\begin{eqnarray}  \label{PLaALa}
p_\la \mapsto \mathfrak z_\la \cdot |\la \rangle = \fa_{-\la} \vac
\end{eqnarray}
identifies $\Lambda_\C$ with the Fock space $\mathfrak F$. Let 
\begin{eqnarray}  \label{TT09120958Z}
{\bf J}^\la(t_1, t_2) \in \mathfrak F \otimes_\C \C[t_1, t_2]
\subset \mathfrak F \otimes_\C \C(t_1, t_2)
\end{eqnarray}
be the inverse image of the equivariant class 
$[\xi_\la] = i_{\lambda!}(1_{\xi_{\lambda}}) \in \fock'$
where $\xi_{\lambda}$ is the fixed point defined by \eqref{Intro_VV06141023Z} 
and $i_\la: \xi_{\lambda} \to X$ is the inclusion map. Then 
\begin{eqnarray}  \label{JLaT1T2}
{\bf J}^\la(t_1, t_2) 
= t_2^{|\la|} t_1^{\ell(\cdot)} J_\la^{(\alpha)}|_{\alpha = -t_1/t_2}
\end{eqnarray}
where $t_1^{\ell(\cdot)}$ acts on the basis element $|\mu \rangle$ 
by $t_1^{\ell(\cdot)}|\mu \rangle = t_1^{\ell(\mu)}|\mu \rangle$.
By \cite[(10.24) on p.381]{Mac},
$$
{\bf J}^\la(t_1, t_2) = {\bf J}^{\la'}(t_2, t_1)
$$
where $\la'$ denotes the conjugate of the partition $\la$.

Recall the operators $\Gamma_\pm(z)$ from \eqref{Intro_GammaLz}.
Their basic and well-known properties are proved here for completeness. 

\begin{lemma}  \label{GammaPMComm}
\begin{enumerate}
\item[{\rm (i)}]
Let $n > 0$. Then, we have
\begin{eqnarray}   
[\Gamma_+(z)^r, \fa_{-n}] &=& r z^{-n} \Gamma_+(z)^r, 
          \label{GammaPMComm.01}    \\
{[\Gamma_-(z)^r, \fa_{n}]} &=& -r z^{n} \Gamma_-(z)^r.
          \label{GammaPMComm.02}
\end{eqnarray}

\item[{\rm (ii)}]
$\Gamma_+(z)^a \Gamma_-(y)^b 
= (1 - z^{-1}y)^{-ab} \cdot \Gamma_-(y)^b \Gamma_+(z)^a$.
\end{enumerate}
\end{lemma}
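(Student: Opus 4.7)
The plan is to reduce both parts to the standard fact that whenever $[A, B]$ is central (commutes with both $A$ and $B$), one has $[\exp(A), B] = [A,B]\exp(A)$ and the Baker--Campbell--Hausdorff identity $e^A e^B = e^{[A,B]} e^B e^A$. In the setting of the free Heisenberg algebra generated by the $\fa_k$ with commutation relation \eqref{Intro_TT09110234X}, the exponents appearing inside $\Gamma_+(z)$ and $\Gamma_-(z)$ are built respectively from annihilation and creation operators, so their mutual commutators with $\fa_{-n}$, $\fa_n$, and with each other will collapse to scalars, putting us exactly in the range of applicability of these identities.

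For part (i), I would write $\Gamma_+(z)^r = \exp(A_+)$ with $A_+ = r\sum_{k>0} \tfrac{z^{-k}}{k}\fa_k$ and compute $[A_+, \fa_{-n}]$ termwise using \eqref{Intro_TT09110234X}. Only the $k = n$ summand survives, producing the scalar $rz^{-n}$. Centrality then yields the first displayed identity immediately. The second identity is the mirror computation with the roles of annihilation and creation operators swapped; the minus sign arises because $[\fa_{-k}, \fa_\ell] = -\ell\,\delta_{k,\ell}$ (as opposed to $+k\,\delta_{k,\ell}$).

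For part (ii), I would apply the BCH identity with $A = a\sum_{k>0}\tfrac{z^{-k}}{k}\fa_k$ and $B = b\sum_{\ell>0}\tfrac{y^\ell}{\ell}\fa_{-\ell}$. The Heisenberg relation forces $[A, B]$ to collapse to a single sum
$$[A, B] \;=\; ab\sum_{k>0}\frac{(z^{-1}y)^k}{k} \;=\; -ab\log(1 - z^{-1}y),$$
and exponentiating produces the scalar factor $(1-z^{-1}y)^{-ab}$ claimed in the lemma.

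No step is a genuine obstacle; the lemma is a standard vertex-operator manipulation recorded to fix conventions. The only interpretive subtlety is that the logarithm expansion and the BCH rearrangement should be read as identities of formal power series in $y/z$, or equivalently as analytic identities in the region $|z^{-1}y| < 1$.
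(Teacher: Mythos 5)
Your proposal is correct and follows essentially the same route as the paper: both arguments rest on the Heisenberg relation \eqref{Intro_TT09110234X} making the relevant commutators central, so that $[\exp(A),B]=[A,B]\exp(A)$ gives part (i) after isolating the single surviving mode $k=n$, and the Baker--Campbell--Hausdorff identity gives part (ii) (the paper applies it mode by mode and multiplies, while you sum the modes first and recognize $-ab\log(1-z^{-1}y)$, which is the same computation).
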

\begin{proof}
(i) It suffices to prove \eqref{GammaPMComm.01}. 
By \eqref{Intro_GammaLz} and \eqref{Intro_TT09110234X},
\begin{eqnarray*}    
   [\Gamma_+(z)^r, \fa_{-n}]
&=&\left [\exp \left ( r \sum_{k > 0} \frac{z^{-k}}{k} \fa_{k} \right ),
   \fa_{-n} \right ]     \\
&=&\exp \left ( r \sum_{k > 0, k \ne n} \frac{z^{-k}}{k} \fa_{k} \right )
   \left [\exp \left ( r \frac{z^{-n}}{n} \fa_{n} \right ),
   \fa_{-n} \right ]     \\
&=&r z^{-n} \exp \left ( r \sum_{k > 0} \frac{z^{-k}}{k} \fa_{k} \right )  \\
&=&r z^{-n} \Gamma_+(z)^r.
\end{eqnarray*}


(ii) As in the preceding paragraph, a straight-forward calculation shows that
\begin{eqnarray*}  
& &\exp \left ( \frac{z^{-n}}{n} a\fa_{n} \right ) 
   \exp \left ( \frac{y^{n}}{n} b\fa_{-n} \right )  \\
&=&\exp \left ( \frac{(z^{-1}y)^n}{n} ab\right ) 
  \cdot \exp \left ( \frac{y^{n}}{n} b\fa_{-n} \right )
  \exp \left ( \frac{z^{-n}}{n} a\fa_{n} \right ).
\end{eqnarray*}
It follows that $\Gamma_+(z)^a \Gamma_-(y)^b 
= (1 - z^{-1}y)^{-ab} \cdot \Gamma_-(y)^b \Gamma_+(z)^a$.
\end{proof}
\subsection{\bf Equivariant Chern character operators} 
\label{subsect_EquivChern}
$\,$
\medskip

Recall the equivariant Chern character operators $\fG_k(t_1, t_2)$ from 
Definition~\ref{Intro_DefFG}. By the localization theorem, 
the equivariant classes $[\xi_\la], \la \vdash n$ form a linear basis of 
$H_\T^*(\Xn)^{\prime}$. It follows from \eqref{Intro_DefFG.2} that 
$\fG_k(t_1, t_2)$ preserves the component $H_\T^*(\Xn)^{\prime}$ 
in $\fock'$. Therefore, $\fG_k(t_1, t_2)$ is of the form
\begin{eqnarray}    \label{BB02061023Z}
\fG_k(t_1, t_2) = \sum_{|\la| = 0} g^{(k)}_\la \frac{\fa_\la}{\la^!}
\end{eqnarray} 
where $g^{(k)}_\la \in \C(t_1, t_2)$.
Since ${\bf J}^\la(t_1, t_2)$ corresponds to $[\xi_\la]$, 
we rewrite \eqref{Intro_DefFG.2} as
\begin{eqnarray}  
  \fG_k(t_1, t_2) {\bf J}^\la(t_1, t_2)
= \sum_{\square\in D_\lambda} \frac{(-1)^k}{k!} 
  (a'(\square)t_1 + \ell'(\square)t_2)^k \cdot {\bf J}^\la(t_1, t_2).
  \label{TT09161027Z.2}  
\end{eqnarray}

Next, we recall the transformed Macdonald symmetric functions 
$\W H_\la(X; q, t)$ from \cite[(11)]{GH} 
(see also \cite[Definition 3.5.2]{Hai}), 
where $q$ and $t$ are two formal variables. 
For a (usual) partition $\la = (\la_1 \ge \ldots \ge \la_\ell > 0)$, put
$$
n(\la) = \sum_{i=1}^\ell (i - 1) \la_i
= \sum_{\square\in D_\lambda} \ell(\square).
$$
In terms of the plethystic or $\la$-ring notation, 
$\W H_\la(X; q, t)$ is defined by
\begin{eqnarray}  \label{DefWHla}
  \W H_\la(X; q, t)
&=&t^{n(\la)} \cdot
   J_\la\left [\frac{X}{1-t^{-1}}; q, t^{-1} \right ]  \nonumber   \\
&=&t^{n(\la)} \cdot \prod_{\square \in D_{\lambda}} 
   (1 - q^{a(\square)}t^{-\ell(\square)-1}) \cdot
   P_\la\left [\frac{X}{1-t^{-1}}; q, t^{-1}\right ] \quad
\end{eqnarray}
where $X = x_1 + x_2 + \cdots$ is the sum of the variables, 
and $J_\la(X; q, t)$ is the integral form of the Macdonald symmetric 
function $P_\la(X; q, t)$ \cite[Section~VI.8]{Mac}.
The functions $\big \{\W H_\la(X; q, t)\big \}_\la$ form a basis of 
$\Lambda_\C \otimes_\C \C(q, t)$. Equivalently,
\begin{eqnarray}  \label{DefWHla.1}
  J_\la(X; q, t)
= t^{n(\la)} \cdot \W H_\la\left [(1-t)X; q, t^{-1} \right ]. 
\end{eqnarray}
Also recall from \cite[(10.23) on p.381]{Mac} that
\begin{eqnarray}  \label{JAlphaLaLimit}
  J_\la^{(\alpha)} 
= \lim_{t \to 1} (1-t)^{-|\la|} J_\la(X; t^\alpha, t).
\end{eqnarray}

The following deformed vertex operator is introduced in \cite{CW}:
\begin{eqnarray}  \label{CW1}
   V(z; q, t, \w q, \w t) 
&=&\exp\left (\sum_{k \ge 1} (q^k - \w q^k) \frac{z^k}{k}\fa_{-k} \right )
   \exp\left (\sum_{k \ge 1} (\w t^k - t^k) \frac{z^{-k}}{k}\fa_k \right ) 
   \nonumber   \\
&=&\Gamma_-(qz) \Gamma_-(\w qz)^{-1} 
   \Gamma_+(\w t^{-1}z) \Gamma_+(t^{-1}z)^{-1}.
\end{eqnarray}
Inspired by this definition, we define the vertex operator
\begin{eqnarray}  \label{DefWVy1y2}
  \W V(z; q, t, \w q, \w t)
= \exp\left (-\sum_{k \ge 1} \frac{z^k}{k}\fa_{-k} \right )
  \exp\left (\sum_{k \ge 1} (\w q^k - q^k)(\w t^k - t^k) 
  \frac{z^{-k}}{k}\fa_k \right ).
\end{eqnarray}
Expand $V(z; q, t, \w q, \w t)$ and $\W V(z; q, t, \w q, \w t)$
in terms of the powers of $z$:
\begin{eqnarray}  
   V(z; q, t, \w q, \w t)
&=&\sum_{m \in \Z} V_m(z; q, t, \w q, \w t) z^m,  \label{CW2.1}  \\
   \W V(z; q, t, \w q, \w t)
&=&\sum_{m \in \Z} \W V_m(z; q, t, \w q, \w t) z^m.  \label{CW2.2}
\end{eqnarray}

Define the linear operator $\fB(q, t)$ on 
$\Lambda_\C \otimes_\C \C(q, t)$ by putting
\begin{eqnarray}  \label{fBt1t2}
  \fB(q, t) \W H_\la(X; q, t) 
= \sum_{\square\in D_\lambda} 
  q^{a'(\square)}t^{\ell'(\square)} \cdot \W H_\la(X; q, t)
\end{eqnarray}
for every partition $\la$. By \cite[Theorem 3.2]{GH} 
or \cite[Corollary~3.5.5]{Hai},  
\begin{eqnarray}  \label{CWRemark12}
  \fB(q, t) 
= \frac{1}{(1-q)(1-t)} \big (1 - \W V_0(z; q, t, 1, 1)\big ).
\end{eqnarray}

\begin{lemma}  \label{TT10180917Z}
The symmetric function $J_\la(X; q, t)$ is an eigenfunction with 
eigenvalue $\sum_{\square\in D_\lambda} q^{a'(\square)}t^{-\ell'(\square)}$ 
of the operator
\begin{eqnarray}  \label{TT10180917Z.0}
  \overline{\fB}(q, t^{-1})
= \frac{1}{(1-q)(1-t^{-1})} \big (1 - V_0(z; t, t^{-1}, 1, qt^{-1})\big ).
\end{eqnarray}
\end{lemma}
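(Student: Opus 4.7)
The plan is to deduce this lemma from the eigenvalue identity \eqref{fBt1t2} for the transformed Macdonald symmetric functions, together with the plethystic relation \eqref{DefWHla.1}. Substituting $t \mapsto t^{-1}$ in \eqref{fBt1t2} and \eqref{CWRemark12}, we see that $\W H_\la(X; q, t^{-1})$ is an eigenfunction of
$$\fB(q, t^{-1}) = \tfrac{1}{(1-q)(1-t^{-1})}\bigl(1 - \W V_0(z; q, t^{-1}, 1, 1)\bigr)$$
with eigenvalue $\sum_{\square\in D_\la} q^{a'(\square)} t^{-\ell'(\square)}$. By \eqref{DefWHla.1}, we have $J_\la(X; q, t) = t^{n(\la)} \W H_\la[(1-t)X; q, t^{-1}]$, so the task reduces to transporting the operator $\fB(q, t^{-1})$ through the plethystic substitution $\Phi_t : f(X) \mapsto f[(1-t)X]$ and identifying the resulting operator with $\overline{\fB}(q, t^{-1})$.

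Viewing $\Phi_t$ as an automorphism of $\mathfrak F \otimes_\C \C(q,t)$ via the identification \eqref{PLaALa}, a direct check on Heisenberg monomials (and forced by preservation of the commutation relations \eqref{Intro_TT09110234X}) gives
$$
\Phi_t\, \fa_{-k}\, \Phi_t^{-1} = (1-t^k)\,\fa_{-k},
\qquad
\Phi_t\, \fa_k\, \Phi_t^{-1} = \tfrac{1}{1-t^k}\,\fa_k.
$$
The heart of the proof is then the vertex-operator identity
$$
\Phi_t\, \W V(z; q, t^{-1}, 1, 1)\, \Phi_t^{-1} = V(z; t, t^{-1}, 1, qt^{-1}).
$$
Substituting the conjugation rules above into \eqref{DefWVy1y2}, the creation-side exponent becomes $\sum_{k \ge 1}(t^k - 1)(z^k/k)\fa_{-k}$; for the annihilation side, the elementary identity $(1-t^{-k})/(1-t^k) = -t^{-k}$ turns $(1-q^k)(1-t^{-k})/(1-t^k)$ into $(qt^{-1})^k - t^{-k}$, so the exponent becomes $\sum_{k \ge 1}\bigl((qt^{-1})^k - t^{-k}\bigr)(z^{-k}/k)\fa_k$. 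Comparing with \eqref{CW1} under $(q, t, \w q, \w t) = (t, t^{-1}, 1, qt^{-1})$ confirms the identity. Extracting the $z^0$ coefficient (which commutes with $\Phi_t$) then yields
$$
\Phi_t\, \fB(q, t^{-1})\, \Phi_t^{-1} = \tfrac{1}{(1-q)(1-t^{-1})}\bigl(1 - V_0(z; t, t^{-1}, 1, qt^{-1})\bigr) = \overline{\fB}(q, t^{-1}).
$$

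Since eigenvalues are preserved under operator conjugation, $\Phi_t \W H_\la(X; q, t^{-1})$ is an eigenfunction of $\overline{\fB}(q, t^{-1})$ with eigenvalue $\sum_{\square\in D_\la} q^{a'(\square)} t^{-\ell'(\square)}$, and this persists after multiplication by the nonzero scalar $t^{n(\la)}$, so the same holds for $J_\la(X; q, t)$. The only real obstacle is bookkeeping in the conjugation calculation; once the plethystic substitution is reinterpreted as a Fock-space automorphism with the explicit rules above, the vertex-operator identity reduces to a routine manipulation of the exponentials defining $\W V$ and $V$.
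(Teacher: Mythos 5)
Your proposal is correct and follows essentially the same route as the paper's proof: both reduce to the eigenvalue identity \eqref{fBt1t2} for $\W H_\la$ via the plethystic relation \eqref{DefWHla.1}, and both identify the operator obtained from $\fB(q,t^{-1})$ under the substitution $p_k \mapsto (1-t^k)p_k$ with $\frac{1}{(1-q)(1-t^{-1})}\big(1 - V_0(z; t, t^{-1}, 1, qt^{-1})\big)$ by the same computation on the exponents of the vertex operators. The only difference is presentational: you package the substitution as an explicit Fock-space automorphism $\Phi_t$ with conjugation rules on $\fa_{\pm k}$, while the paper performs the same manipulation directly on $p_k$ and $k\,\partial/\partial p_k$.
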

\begin{proof}
By \eqref{DefWHla.1}, we have $J_\la(X; q, t) = t^{n(\la)} \cdot
\W H_\la\left [(1-t)X; q, t^{-1} \right ]$.
Regard the function $\W H_\la(X; q, t^{-1})$ as a polynomial of 
the power symmetric functions $p_1, p_2, \cdots$. 
Then, $\W H_\la\left [(1-t)X; q, t^{-1} \right ]$ is obtained 
from $\W H_\la(X; q, t^{-1})$ via the substitution $p_k \mapsto 
(1 - t^{k})p_k$ for every $k \ge 1$. Therefore, by \eqref{fBt1t2},
\begin{eqnarray}  \label{TT10180917Z.1}
  \overline{\fB}(q, t^{-1}) J_\la(X; q, t) 
= \sum_{\square\in D_\lambda} 
  q^{a'(\square)}t^{-\ell'(\square)} \cdot J_\la(X; q, t)
\end{eqnarray}
where $\overline{\fB}(q, t^{-1})$ is obtained from $\fB(q, t^{-1})$
via the substitution $p_k \mapsto (1 - t^{k})p_k$ for every $k \ge 1$. 
By \eqref{CWRemark12}, we obtain
\begin{eqnarray}  \label{TT10180917Z.2}
  \fB(q, t^{-1}) 
= \frac{1}{(1-q)(1-t^{-1})} \big (1 - \W V_0(z; q, t^{-1}, 1, 1)\big ).
\end{eqnarray}
For $k > 0$, via the identificaition \eqref{PLaALa}, the operators $\fa_{-k}$ 
and $\fa_k$ on the space $\mathfrak F$ are identified with $p_k \cdot $ 
and $k \cdot \partial/\partial p_k$ on $\Lambda_\C$ respectively.
So regarded as an operation on $\Lambda_\C \otimes_\C \C(q, t)$,
we see from \eqref{DefWVy1y2} that $\W V(z; q, t^{-1}, 1, 1)$ is equal to 
$$
\exp\left (-\sum_{k \ge 1} \frac{z^k}{k}p_{k} \right )
  \exp\left (\sum_{k \ge 1} (1 - q^k)(1 - t^{-k}) 
  z^{-k} \frac{\partial}{\partial p_k} \right ).
$$
Under the substitution $p_k \mapsto (1 - t^{k})p_k$ for every $k \ge 1$,
$\W V(z; q, t^{-1}, 1, 1)$ becomes
$$
\exp\left (\sum_{k \ge 1} (t^{k} - 1)\frac{z^k}{k}p_{k} \right )
  \exp\left (\sum_{k \ge 1} (1 - q^k)(-t^{-k})
  z^{-k} \frac{\partial}{\partial p_k} \right )
$$
which is $V(z; t, t^{-1}, 1, qt^{-1})$ via 
the identificaition \eqref{PLaALa}. Hence by \eqref{TT10180917Z.2},
$$
\overline{\fB}(q, t^{-1})
= \frac{1}{(1-q)(1-t^{-1})} \big (1 - V_0(z; t, t^{-1}, 1, qt^{-1})\big ).
$$
Now our lemma follows immediately from \eqref{TT10180917Z.1}.
\end{proof}

When $q = e^{t_1}$ and $t = e^{t_2}$, we write $\overline{\fB}(q, t^{-1})$ as 
$\overline{\fB}(t_1, -t_2)$ and regard $\overline{\fB}(q, t^{-1})
= \overline{\fB}(t_1, -t_2)$ as an operator on $\fock'$ via the identifications
$$
(\Lambda_\C \otimes_\C \C(q, t)) \otimes_{\C(q, t)} \C(t_1, t_2) 
\cong \Lambda_\C \otimes_\C \C(t_1, t_2)
\cong \fock'.
$$

\begin{lemma}  \label{LMAfBy1y2}
Let $q = e^{t_1}$ and $t = e^{t_2}$. Then, 
$\overline{\fB}(q, t^{-1}) = \overline{\fB}(t_1, -t_2)$ is equal to
$$
\sum_{|\la| = 0} \frac{\fa_\la}{\la^!} 
t_1^{\ell(\la^+) - 1}t_2^{\ell(\la^-) - 1} \left ( 
1 + \frac{|\la^+|-1}{2} (t_1-t_2) \right.
$$
\begin{eqnarray}  \label{LMAfBy1y2.0}
\left.
+ \left (\frac{1}{12} - \frac{|\la^+|}{4} + \frac{|\la^+|^2}{8}\right )(t_1-t_2)^2
+ \frac{s(\la^+)}{24}t_1^2 - \frac{1}{12}t_1t_2
+ \frac{s(\la^-)}{24}t_2^2 + O(3) \right )
\end{eqnarray}
where $\la^-$ and $\la^+$ denote
the negative part and positive part of $\la$ respectively,
and $O(3)$ denotes a formal power series in $t_1$ and $t_2$ with 
lowest degree $\ge 3$.
\end{lemma}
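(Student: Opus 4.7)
My plan is to compute $\overline{\fB}(q,t^{-1})$ directly from its definition as the quotient $(1-V_0)/((1-q)(1-t^{-1}))$, by first writing $V_0$ in the basis $\{\fa_\la/\la^!\}_{|\la|=0}$ and then Taylor-expanding everything after the substitution $q=e^{t_1}$, $t=e^{t_2}$.

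First I would extract $V_0$ explicitly. From \eqref{Intro_CW1},
\begin{equation*}
V(z;t,t^{-1},1,qt^{-1}) = \exp\!\Bigl(\sum_{k\ge 1} \tfrac{(t^k-1)\,z^k}{k}\,\fa_{-k}\Bigr)\,\exp\!\Bigl(\sum_{k\ge 1}\tfrac{t^{-k}(q^k-1)\,z^{-k}}{k}\,\fa_k\Bigr).
\end{equation*}
Since the $\fa_{-k}$ pairwise commute and likewise the $\fa_k$, each exponential factors over $k$ as a product of geometric series in the individual $\fa_{\pm k}$. Extracting the $z^0$-coefficient then forces a pair of multiplicities $(m_{-k})_{k>0}$, $(m_k)_{k>0}$ with $\sum_k k\,m_{-k}=\sum_k k\,m_k$, that is, a generalized partition $\la$ with $|\la|=0$. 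Writing $c_k^-=(t^k-1)/k$ and $c_k^+=t^{-k}(q^k-1)/k$, this gives
\begin{equation*}
V_0 = \sum_{|\la|=0}\frac{\fa_\la}{\la^!}\prod_{k\ge 1}(c_k^-)^{m_{-k}}\prod_{k\ge 1}(c_k^+)^{m_k},
\end{equation*}
and the $\la=\emptyset$ term contributes exactly the constant $1$, which will cancel the $1$ in $1-V_0$.

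Next I would factor out the leading powers of $t_1,t_2$. Writing $c_k^-=t_2\cdot\tfrac{e^{kt_2}-1}{kt_2}$ and $c_k^+=t_1\cdot e^{-kt_2}\cdot\tfrac{e^{kt_1}-1}{kt_1}$, taking logarithms, and using the standard expansion $\log\!\bigl((e^x-1)/x\bigr)=x/2+x^2/24+O(x^4)$ together with $\sum_k k\,m_{-k}=\sum_k k\,m_k=|\la^+|$ (which holds because $|\la|=0$), I obtain
\begin{equation*}
\prod_k (c_k^-)^{m_{-k}}\prod_k (c_k^+)^{m_k} = t_1^{\ell(\la^+)}\,t_2^{\ell(\la^-)}\exp\!\Bigl(\tfrac{|\la^+|}{2}(t_1-t_2) + \tfrac{s(\la^+)}{24}t_1^2 + \tfrac{s(\la^-)}{24}t_2^2 + O(3)\Bigr).
\end{equation*}
The same logarithmic trick applied to $(1-q)(1-t^{-1})=-t_1 t_2\cdot\tfrac{e^{t_1}-1}{t_1}\cdot\tfrac{1-e^{-t_2}}{t_2}$ yields
\begin{equation*}
(1-q)(1-t^{-1}) = -t_1 t_2\,\exp\!\Bigl(\tfrac{t_1-t_2}{2}+\tfrac{t_1^2+t_2^2}{24}+O(3)\Bigr).
\end{equation*}

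Finally I would combine the two. For $\la\neq\emptyset$ with $|\la|=0$ the quotient equals
\begin{equation*}
\overline{\fB}(t_1,-t_2) = \sum_{|\la|=0,\,\la\neq\emptyset} \frac{\fa_\la}{\la^!}\,t_1^{\ell(\la^+)-1}t_2^{\ell(\la^-)-1}\exp(M_\la),
\end{equation*}
where $M_\la=\tfrac{|\la^+|-1}{2}(t_1-t_2)+\tfrac{s(\la^+)-1}{24}t_1^2+\tfrac{s(\la^-)-1}{24}t_2^2+O(3)$. Expanding $\exp(M_\la)=1+M_\la+M_\la^2/2+O(3)$ with $M_\la^2/2=\tfrac{(|\la^+|-1)^2}{8}(t_1-t_2)^2+O(3)$ and regrouping then gives \eqref{LMAfBy1y2.0}. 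The single non-automatic step is checking that the denominator corrections $-t_1^2/24$ and $-t_2^2/24$ combine with the $t_1^2,\,t_1t_2,\,t_2^2$ parts of $\tfrac{(|\la^+|-1)^2}{8}(t_1-t_2)^2$ to reproduce the displayed coefficients $\tfrac{1}{12}-\tfrac{|\la^+|}{4}+\tfrac{|\la^+|^2}{8}$ on $(t_1-t_2)^2$ and $-\tfrac{1}{12}$ on $t_1 t_2$. This order-two bookkeeping is the only real obstacle; the logarithmic factorization handles the leading structure cleanly, and there are no conceptual subtleties beyond collecting powers.
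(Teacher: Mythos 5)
Your proposal is correct and follows essentially the same route as the paper: extract the $z^0$-mode of $V$, index the resulting terms by generalized partitions $\la$ with $|\la|=0$, and Taylor-expand to second order after substituting $q=e^{t_1}$, $t=e^{t_2}$; your logarithmic packaging of the factors is just a tidier bookkeeping of the paper's direct multiplication of the series $\bigl(\sum_{i\ge1}(kt_2)^{i-1}/i!\bigr)^{m_{-k}}$ and its companions. The final coefficient check does go through, since $\tfrac{(|\la^+|-1)^2}{8}(t_1-t_2)^2-\tfrac{t_1^2+t_2^2}{24}=\bigl(\tfrac1{12}-\tfrac{|\la^+|}{4}+\tfrac{|\la^+|^2}{8}\bigr)(t_1-t_2)^2-\tfrac1{12}t_1t_2$, so your expansion reproduces \eqref{LMAfBy1y2.0} exactly.
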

\begin{proof}
By \eqref{TT10180917Z.0}, \eqref{CW2.1} and \eqref{CW1},
$\overline{\fB}(q, t^{-1})$ is equal to
$$
\frac{\Coe_{z^0}}{(1-q)(1-t^{-1})} \left (1 - 
   \exp\left (\sum_{k \ge 1} (t^{k} - 1) \frac{z^k}{k}\fa_{-k} \right )
   \exp\left (\sum_{k \ge 1} (q^k - 1)t^{-k} 
   \frac{z^{-k}}{k}\fa_k \right )\right ).
$$
Expanding the two exponential functions, we see that 
$$
\overline{\fB}(q, t^{-1})
= \sum_{|\la| = 0} \frac{\fa_\la}{\la^!} \frac{-1}{(q-1)(t^{-1}-1)}
   \prod_{k > 0} \left (\frac{t^{k} - 1}{k}\right )^{m_{-k}} 
   \cdot \prod_{k > 0} \left (\frac{(q^k - 1)t^{-k}}{k}\right )^{m_k}
$$
where $m_i$ with $i \ne 0$ denotes the multiplicity of part $i$ in 
the generalized partition $\la$. Since $q = e^{t_1}$ and $t = e^{t_2}$,
we conclude that $\overline{\fB}(q, t^{-1})$ is equal to
$$
\sum_{|\la| = 0} \frac{\fa_\la}{\la^!} 
\frac{t_1^{\ell(\la^+) - 1}t_2^{\ell(\la^-) - 1}}
     {\sum_{i \ge 1} t_1^{i-1}/i! \cdot \sum_{i \ge 1} (-t_2)^{i-1}/i!} 
$$
\begin{eqnarray}  \label{LMAfBy1y2.1}
\cdot \prod_{k > 0} \left (\sum_{i \ge 1} 
  \frac{(kt_2)^{i-1}}{i!} \right )^{m_{-k}} \cdot 
  \prod_{k > 0} \left (\sum_{i \ge 1} \frac{(kt_1)^{i-1}}{i!} 
  \cdot \sum_{i \ge 0} \frac{(-kt_2)^i}{i!} \right )^{m_k}.
\end{eqnarray}

Note that $1/\big ({\sum_{i \ge 1} t_1^{i-1}/i! 
\cdot \sum_{i \ge 1} (-t_2)^{i-1}/i!}\big )$ is equal to
\begin{eqnarray}  \label{LMAfBy1y2.2}
1 - \frac12(t_1-t_2) + \frac{1}{12}\big ( (t_1-t_2)^2 - t_1t_2\big ) + O(3).
\end{eqnarray}
Furthermore, $\left (\sum_{i \ge 1} {(kt_2)^{i-1}}/{i!} \right )^{m_{-k}}$
is equal to
\begin{eqnarray*}   
& &1 + m_{-k} \left (\sum_{i \ge 2} \frac{(kt_2)^{i-1}}{i!} \right ) 
   + \binom{m_{-k}}{2} \left (\sum_{i \ge 2} \frac{(kt_2)^{i-1}}{i!} \right )^2 
   + O(3)   \\
&=&1 + \frac{k m_{-k}}{2} t_2 + \left (\frac{k^2 m_{-k}^2}{8} 
   + \frac{k^2 m_{-k}}{24} \right ) t_2^2 + O(3).
\end{eqnarray*}
So we see that $\prod_{k > 0} \left (\sum_{i \ge 1} {(kt_2)^{i-1}}/{i!} 
\right )^{m_{-k}}$ is equal to
\begin{eqnarray}  \label{LMAfBy1y2.3}
1 + \frac{|\la^+|}{2} t_2 + \left (\frac{|\la^+|^2}{8} 
   + \frac{s(\la^-)}{24} \right ) t_2^2 + O(3)
\end{eqnarray}
noting that $\sum_{k > 0} k m_{-k} = \sum_{k > 0} k m_{k} = |\la^+|$. 
Similarly,
\begin{eqnarray}  \label{LMAfBy1y2.4}
& &\prod_{k > 0} \left (\sum_{i \ge 1} \frac{(kt_1)^{i-1}}{i!} 
  \cdot \sum_{i \ge 0} \frac{(-kt_2)^i}{i!} \right )^{m_k}  \nonumber  \\
&=&1 + |\la^+| \left (\frac12 t_1 - t_2\right )
   + \frac{s(\la^+)}{24} t_1^2 
   + \frac{|\la^+|^2}{2}\left (\frac12 t_1 - t_2\right )^2 + O(3).
\end{eqnarray}
Combining \eqref{LMAfBy1y2.1}, \eqref{LMAfBy1y2.2}, \eqref{LMAfBy1y2.3}
and \eqref{LMAfBy1y2.4} proves the lemma.
\end{proof}

\begin{lemma}    \label{TT10210857Z}
Let $q = e^{\alpha t_0}$ where $\alpha$ is a formal parameter, and $t = e^{t_0}$. 
For $k \ge 0$, let $\overline{\fB}_k^{(\alpha)} 
= \Coe_{t_0^k} \overline{\fB}(q, t^{-1})$. 
If $0 \le k \le 2$, then 
\begin{eqnarray}    \label{TT10210857Z.0}
  \overline{\fB}_k^{(\alpha)} J_\la^{(\alpha)} 
= \sum_{\square\in D_\lambda} \frac{(a'(\square) \alpha - \ell'(\square))^k}{k!} 
  \cdot J_\la^{(\alpha)}, \qquad \forall \la.
\end{eqnarray}
\end{lemma}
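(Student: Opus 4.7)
My plan is to derive \eqref{TT10210857Z.0} by specializing Lemma~\ref{TT10180917Z} and extracting the $t_0^k$-coefficient. Substituting $q = e^{\alpha t_0}$, $t = e^{t_0}$ into the eigenvalue identity of that lemma turns the eigenvalue into $\sum_{\square \in D_\la} e^{(a'(\square)\alpha - \ell'(\square))t_0}$, whose $t_0^k$-coefficient is exactly the right-hand side of \eqref{TT10210857Z.0}. Formula \eqref{JAlphaLaLimit} lets me factor $J_\la(X; e^{\alpha t_0}, e^{t_0}) = (1 - e^{t_0})^{|\la|} \Phi_\la(\alpha, t_0)$, where $\Phi_\la(\alpha, t_0) = J_\la^{(\alpha)} + t_0 \tilde H_1 + t_0^2 \tilde H_2 + \cdots$ is a formal power series in $t_0$ whose coefficients are degree-$|\la|$ symmetric functions and whose constant term is $J_\la^{(\alpha)}$. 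Cancelling $(1 - e^{t_0})^{|\la|}$ on both sides produces the same eigenvalue equation for $\Phi_\la$, and matching $t_0^n$-coefficients gives
\[
\sum_{j+m=n}\bigl(\overline{\fB}_j^{(\alpha)} - E_j^{(\la)}\bigr)\tilde H_m = 0, \qquad E_j^{(\la)} := \sum_{\square \in D_\la} \frac{(a'(\square)\alpha - \ell'(\square))^j}{j!}.
\]

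From Lemma~\ref{LMAfBy1y2} (with $t_1 = \alpha t_0$ and $t_2 = t_0$) the leading operator $\overline{\fB}_0^{(\alpha)}$ equals the number operator $\sum_{i > 0}\fa_{-i}\fa_i$, which acts on every degree-$|\la|$ symmetric function as multiplication by $|\la| = E_0^{(\la)}$; hence $(\overline{\fB}_0^{(\alpha)} - E_0^{(\la)})\tilde H_m = 0$ for every $m$. The $n = 0$ relation is then automatic, the $n = 1$ relation collapses at once to $(\overline{\fB}_1^{(\alpha)} - E_1^{(\la)})J_\la^{(\alpha)} = 0$, and the $n = 2$ relation simplifies to
\[
(\overline{\fB}_2^{(\alpha)} - E_2^{(\la)})J_\la^{(\alpha)} = -(\overline{\fB}_1^{(\alpha)} - E_1^{(\la)})\tilde H_1.
\]

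The main obstacle is showing the right-hand side of this last identity vanishes. My plan is to establish the stronger statement that $\tilde H_1$ is itself a scalar multiple of $J_\la^{(\alpha)}$, so that the $k = 1$ case already proved forces the vanishing. I use the factorization $\Phi_\la(\alpha, t_0) = \bigl(c_\la(t^\alpha, t)/(1-t)^{|\la|}\bigr) P_\la(X; t^\alpha, t)$ with $c_\la(q, t) = \prod_\square(1 - q^{a(\square)} t^{\ell(\square)+1})$ and $P_\la$ the Macdonald $P$-polynomial. A direct $(1-t)$-expansion of the scalar prefactor yields $c_\la^{(\alpha)}\bigl[1 + \tfrac{t_0}{2}(\alpha\, n(\la') + n(\la)) + O(t_0^2)\bigr]$, contributing a scalar multiple of $J_\la^{(\alpha)} = c_\la^{(\alpha)} P_\la^{(\alpha)}$ at order $t_0$. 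What remains is the Macdonald identity $P_\la(X; t^\alpha, t) = P_\la^{(\alpha)} + O((1-t)^2)$, which I intend to prove by combining the unitriangular normalization $P_\la = m_\la + \sum_{\mu < \la} c_{\la\mu}(q, t)m_\mu$ (whose $m_\la$-coefficient is identically $1$ in $q, t$) with differentiating the Macdonald eigenvalue equation once in $t$ at $t = 1$ along the curve $q = t^\alpha$. Granting this, $\tilde H_1 = \tfrac12(\alpha\, n(\la') + n(\la)) J_\la^{(\alpha)}$, completing the proof.
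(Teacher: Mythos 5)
Your overall architecture is the same as the paper's: expand everything in $t_0$, use the eigenvalue identity of Lemma~\ref{TT10180917Z}, note that $\overline{\fB}_0^{(\alpha)}$ is the degree operator so the $j=0$ terms drop out, read off $k=1$ at first order, and reduce $k=2$ to showing that the first-order term $\tilde H_1$ of $J_\la(X;e^{\alpha t_0},e^{t_0})/(1-e^{t_0})^{|\la|}$ is a scalar multiple of $J_\la^{(\alpha)}$. That reduction is exactly the paper's \eqref{TT10210857Z.4}, and your computed scalar $\tfrac12(\alpha\,n(\la')+n(\la))$ agrees with it; your expansion of the prefactor $c_\la(t^\alpha,t)/(1-t)^{|\la|}$ is also correct.

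The gap is in the one step that carries all the content of the $k=2$ case: your proof that $P_\la(X;t^\alpha,t)=P_\la^{(\alpha)}+O(t_0^2)$. The claim is true, but ``differentiating the Macdonald eigenvalue equation once in $t$ at $t=1$ along $q=t^\alpha$'' cannot deliver it. Along this curve every standard Macdonald operator (e.g.\ $D_n^1=\sum_i A_i(x;t)T_{q,x_i}$) degenerates at $t_0=0$ to a scalar, and its first $t_0$-derivative is again scalar on each graded piece (for $D_n^1$ it is $\binom{n}{2}+\alpha\sum_i x_i\partial_i$), matching the first derivative of the eigenvalue $\sum_i q^{\la_i}t^{n-i}$. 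Hence the first-order coefficient of the eigenvalue equation reads $0=0$ and imposes no constraint on the first-order term of $P_\la$; the Jack operator only appears at second order, and the resulting second-order relation is the Jack eigenvalue equation for $P_\la^{(\alpha)}$, still silent about $P_\la^{(1)}$. To constrain $P_\la^{(1)}$ this way you would need the third-order coefficient together with knowledge of how the higher expansion terms of the Macdonald operator act on Jack polynomials, which is not available. The clean repair is the duality $P_\la(x;q^{-1},t^{-1})=P_\la(x;q,t)$ \cite[VI.(4.14)]{Mac}: along $q=e^{\alpha t_0}$, $t=e^{t_0}$ it makes $P_\la$ an even function of $t_0$, so the first-order term vanishes outright. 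This is precisely the mechanism the paper uses, in its $J_\la$-form $J_\la(X;q^{-1},t^{-1})=(-1)^{|\la|}q^{-n(\la')}t^{-n(\la)-|\la|}J_\la(X;q,t)$ \cite[(8.5) on p.353]{Mac}, which yields \eqref{TT10210857Z.4} in one line without separating off the prefactor. With that substitution your argument closes and is essentially the paper's proof.
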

\begin{proof}
Setting $t_1 = \alpha t_0$ and $t_2 = t_0$ in Lemma~\ref{LMAfBy1y2}, we see that
$\overline{\fB}(q, t^{-1})$ is equal to
$$
\sum_{|\la| = 0} \frac{\fa_\la}{\la^!} 
\alpha^{\ell(\la^+) - 1} t_0^{\ell(\la) - 2} \left ( 
1 + \frac{|\la^+|-1}{2} (\alpha - 1)t_0 \right.
$$
\begin{eqnarray}   \label{TT10210857Z.1}
\left.
+ \left (\frac{1}{12} - \frac{|\la^+|}{4} + \frac{|\la^+|^2}{8}\right )
(\alpha-1)^2 t_0^2 + \frac{s(\la^+)\alpha^2 - 2\alpha + s(\la^-)}{24} t_0^2 
+ O(3) \right ).
\end{eqnarray}
It follows that $\overline{\fB}_0^{(\alpha)} = \sum_{i > 0} \fa_{-i}\fa_{i}$,
and \eqref{TT10210857Z.0} holds when $k = 0$. Also,
\begin{eqnarray}   
   \overline{\fB}_1^{(\alpha)} 
&=&\sum_{\ell(\la) = 3, |\la| = 0} \alpha^{\ell(\la^+) - 1} \frac{\fa_\la}{\la^!}
    + \sum_{i > 0} \frac{i-1}{2} (\alpha - 1) \fa_{-i}\fa_{i},
       \label{TT10210857Z.2}    \\
   \overline{\fB}_2^{(\alpha)}
&=&\sum_{\ell(\la) = 4, |\la| = 0} \alpha^{\ell(\la^+) - 1} \frac{\fa_\la}{\la^!} 
    + \sum_{\ell(\la) = 3, |\la| = 0} \alpha^{\ell(\la^+) - 1}
    \frac{|\la^+|-1}{2} (\alpha - 1) \frac{\fa_\la}{\la^!}    \nonumber \\
& &+ \sum_{i > 0} \left (
    \left (\frac{1}{12} - \frac{i}{4} + \frac{i^2}{8}\right )(\alpha-1)^2 
    + \frac{i^2\alpha^2 - 2\alpha + i^2}{24} \right ) \fa_{-i}\fa_{i}.  
       \label{TT10210857Z.3}
\end{eqnarray}

By \eqref{JAlphaLaLimit}, $J_\la^{(\alpha)} 
= \displaystyle{\lim_{t \to 1}} (1-t)^{-|\la|} J_\la(X; t^\alpha, t)
= \displaystyle{\lim_{t_0 \to 0}} (1-e^{t_0})^{-|\la|} 
  J_\la(X; e^{\alpha t_0}, e^{t_0})$.
Let $\la'$ denote the conjugate of the partition $\la$. 
Then by \cite[(8.5) on p.353]{Mac}, 
$$
  J_\la(X; q^{-1}, t^{-1}) 
= (-1)^{|\la|} q^{-n(\la')} t^{-n(\la) - |\la|} J_\la(X; q, t).
$$
Thus, $(1 - t^{-1})^{-|\la|} J_\la(X; q^{-1}, t^{-1})
= q^{-n(\la')} t^{-n(\la)} \cdot (1-t)^{-|\la|} J_\la(X; q, t)$. 
So
\begin{eqnarray}   \label{TT10210857Z.4}
  \frac{J_\la(X; q, t)}{(1-t)^{|\la|}}
= J_\la^{(\alpha)} 
  + \frac{n(\la') \alpha + n(\la)}{2} J_\la^{(\alpha)} \cdot t_0 + O(2).
\end{eqnarray}

Next, $\overline{\fB}(q, t^{-1}) 
= \sum_{k \ge 0} \overline{\fB}_k^{(\alpha)} t_0^k$ 
in terms of our notation. By Lemma~\ref{TT10180917Z},
\begin{eqnarray}    \label{TT10210857Z.5}
  \sum_{k \ge 0} \overline{\fB}_k^{(\alpha)} t_0^k 
  \frac{J_\la(X; q, t)}{(1-t)^{|\la|}}
= \sum_{k \ge 0} c_{\la, k}^{(\alpha)} t_0^k 
  \cdot \frac{J_\la(X; q, t)}{(1-t)^{|\la|}}
\end{eqnarray}
where $c_{\la, k}^{(\alpha)} = \sum_{\square \in D_\lambda} 
{(a'(\square) \alpha - \ell'(\square))^k}/{k!}$. Since 
$$
\overline{\fB}_0^{(\alpha)} J_\la(X; q, t) = |\la| \cdot J_\la(X; q, t),
$$
we can drop the index $k = 0$ 
from both sides of \eqref{TT10210857Z.5}. It follows that 
\begin{eqnarray}    \label{TT10210857Z.6}
   \sum_{k \ge 1} \overline{\fB}_k^{(\alpha)} t_0^{k-1} 
   \frac{J_\la(X; q, t)}{(1-t)^{|\la|}}   
=  \sum_{k \ge 1} c_{\la, k}^{(\alpha)} t_0^{k-1} 
   \cdot \frac{J_\la(X; q, t)}{(1-t)^{|\la|}}.   
\end{eqnarray}
Letting $t_0 \to 0$ yields $\overline{\fB}_1^{(\alpha)} J_\la^{(\alpha)} 
= c_{\la, 1}^{(\alpha)} \cdot J_\la^{(\alpha)}$. So
\eqref{TT10210857Z.0} holds for $k = 1$.

To prove that \eqref{TT10210857Z.0} holds for $k = 2$, 
we subtract $\overline{\fB}_1^{(\alpha)} J_\la^{(\alpha)} 
= c_{\la, 1}^{(\alpha)} \cdot J_\la^{(\alpha)}$ from \eqref{TT10210857Z.6}, 
followed by dividing by $t_0$ on both sides. We obtain
\begin{eqnarray*}
& &\sum_{k \ge 2} \overline{\fB}_k^{(\alpha)} t_0^{k-2} 
   \frac{J_\la(X; q, t)}{(1-t)^{|\la|}}   
   + \overline{\fB}_1^{(\alpha)} \frac{1}{t_0} \left (
   \frac{J_\la(X; q, t)}{(1-t)^{|\la|}} - J_\la^{(\alpha)} \right )     \\
&=&\sum_{k \ge 2} c_{\la, k}^{(\alpha)} t_0^{k-2} 
   \cdot \frac{J_\la(X; q, t)}{(1-t)^{|\la|}}
   + c_{\la, 1}^{(\alpha)} \cdot \frac{1}{t_0} \left (
   \frac{J_\la(X; q, t)}{(1-t)^{|\la|}} - J_\la^{(\alpha)} \right ).   
\end{eqnarray*}
Letting $t_0 \to 0$ and using \eqref{TT10210857Z.4}, we conclude that 
$$
  \overline{\fB}_2^{(\alpha)} J_\la^{(\alpha)}
  + \overline{\fB}_1^{(\alpha)} \frac{n(\la') \alpha + n(\la)}{2} J_\la^{(\alpha)}
= c_{\la, 2}^{(\alpha)} J_\la^{(\alpha)}
  + c_{\la, 1}^{(\alpha)} \cdot \frac{n(\la') \alpha + n(\la)}{2} J_\la^{(\alpha)}.
$$
Therefore, $\overline{\fB}_2^{(\alpha)} J_\la^{(\alpha)} 
= c_{\la, 2}^{(\alpha)} \cdot J_\la^{(\alpha)}$,
and \eqref{TT10210857Z.0} holds for $k = 2$ as well.
\end{proof}

We are now ready to prove our first main theorem regarding 
the $k$-th equivariant Chern character operator 
$\fG_k(t_1, t_2) \in \End(\fock').$

\begin{theorem}    \label{TT10210727X}
If $0 \le k \le 2$, then
\begin{eqnarray}    \label{TT10210727X.0}
  \fG_k(t_1, t_2)
= t_2^k t_1^{\delta(\cdot)} 
  \overline{\fB}_k^{(\alpha)}|_{\alpha = -t_1/t_2}
\end{eqnarray}
where $t_1^{\delta(\cdot)} \fa_\la = t_1^{\delta(\la)} \fa_\la 
= t_1^{\ell(\la^-) - \ell(\la^+)} \fa_\la$
for a generalized partition $\la$, and $\la^+$ and $\la^-$ are 
the positive and negative parts of $\la$ respectively.
\end{theorem}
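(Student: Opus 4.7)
The strategy is to verify the identity by evaluating both sides on the basis $\{{\bf J}^\la(t_1, t_2)\}$ of $\fock'$ given by \eqref{BB02051005Z} and \eqref{JLaT1T2}. The action of $\fG_k(t_1, t_2)$ on this basis is already pinned down by \eqref{TT09161027Z.2}, so what remains is to compute
$$
t_2^k\, t_1^{\delta(\cdot)}\, \overline{\fB}_k^{(\alpha)}\big|_{\alpha = -t_1/t_2}\, {\bf J}^\la(t_1, t_2)
$$
and show it equals the same scalar times ${\bf J}^\la(t_1, t_2)$.

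The plan is first to combine Lemma~\ref{TT10180917Z}, Lemma~\ref{TT10210857Z}, and the specialization $\alpha = -t_1/t_2$ in the eigenvalue formula \eqref{TT10210857Z.0}: after multiplying by $t_2^k$, the eigenvalue of $t_2^k \overline{\fB}_k^{(\alpha)}\big|_{\alpha = -t_1/t_2}$ on $J_\la^{(\alpha)}\big|_{\alpha = -t_1/t_2}$ equals $\sum_{\square \in D_\la} \frac{(-1)^k (a'(\square)t_1 + \ell'(\square)t_2)^k}{k!}$, which is exactly the scalar appearing on the right-hand side of \eqref{TT09161027Z.2}. Then, using \eqref{JLaT1T2}, I only need to pass the operator $t_1^{\delta(\cdot)}$ past the grading operator $t_1^{\ell(\cdot)}$ that produces ${\bf J}^\la(t_1, t_2)$ from $J_\la^{(\alpha)}\big|_{\alpha = -t_1/t_2}$.

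The key intermediate step is the commutation identity
$$
t_1^{\ell(\cdot)}\, \fa_\la = t_1^{\delta(\la)}\, \fa_\la\, t_1^{\ell(\cdot)}
$$
for a generalized partition $\la$ with $|\la| = 0$. This holds because $\fa_\la$ applied to a basis vector $|\mu\rangle$ produces a combination of $|\nu\rangle$'s with $\ell(\nu) = \ell(\mu) + \ell(\la^-) - \ell(\la^+) = \ell(\mu) + \delta(\la)$: each annihilation factor $\fa_k$ ($k > 0$) drops the number of parts by one and each creation factor $\fa_{-k}$ raises it by one. From the explicit expansion in Lemma~\ref{LMAfBy1y2} (equivalently, from the fact that $V_0$ preserves the number operator), each $\overline{\fB}_k^{(\alpha)}$ is a sum $\sum_{|\la| = 0} c_{\la,k}^{(\alpha)} \fa_\la / \la^!$, so summing the commutation identity over $\la$ yields
$$
t_1^{\ell(\cdot)}\, \overline{\fB}_k^{(\alpha)}\big|_{\alpha = -t_1/t_2} = t_1^{\delta(\cdot)}\, \overline{\fB}_k^{(\alpha)}\big|_{\alpha = -t_1/t_2}\, t_1^{\ell(\cdot)}.
$$

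Putting it all together: starting from ${\bf J}^\la(t_1, t_2) = t_2^{|\la|}\, t_1^{\ell(\cdot)}\, J_\la^{(\alpha)}\big|_{\alpha = -t_1/t_2}$, applying $t_2^k\, t_1^{\delta(\cdot)}\, \overline{\fB}_k^{(\alpha)}\big|_{\alpha = -t_1/t_2}$, moving $t_1^{\ell(\cdot)}$ to the outside via the commutation identity, and then invoking the eigenvalue formula from Lemma~\ref{TT10210857Z} (rescaled by $t_2^k$) gives exactly the right-hand side of \eqref{TT09161027Z.2}. Since $\{{\bf J}^\la(t_1, t_2)\}_\la$ is a basis, the operator identity \eqref{TT10210727X.0} follows. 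The main obstacle is bookkeeping: keeping careful track of which variable the power of $t_1$ attaches to (the diagonal operator on basis vectors versus the ``weight'' $\delta(\la)$ on generalized-partition summands of $\overline{\fB}_k^{(\alpha)}$), and verifying that the substitution $\alpha = -t_1/t_2$ correctly transmutes the eigenvalue $(a'(\square)\alpha - \ell'(\square))^k$ into $(-1)^k t_2^{-k}(a'(\square)t_1 + \ell'(\square)t_2)^k$. Note that the restriction $0 \le k \le 2$ enters only through Lemma~\ref{TT10210857Z}, whose proof relies on the expansion \eqref{TT10210857Z.4} being accurate to order $t_0^1$.
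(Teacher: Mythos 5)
Your proposal is correct and follows essentially the same route as the paper: reduce to the eigenbasis $\{{\bf J}^\la(t_1,t_2)\}$, invoke Lemma~\ref{TT10210857Z} at $\alpha=-t_1/t_2$ for the eigenvalue, and use the fact that $\fa_{\w\la}$ shifts the length grading by $\delta(\w\la)$ to commute $t_1^{\delta(\cdot)}\overline{\fB}_k^{(\alpha)}$ past $t_1^{\ell(\cdot)}$ (the paper carries this out at the level of the coefficients $h_{\w\la,\w\mu,\mu}^{(\alpha)}$, whereas you package it as the operator identity $t_1^{\ell(\cdot)}\fa_{\w\la}=t_1^{\delta(\w\la)}\fa_{\w\la}t_1^{\ell(\cdot)}$, but the content is identical). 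Your closing remarks on where the restriction $0\le k\le 2$ enters are also accurate.
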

\begin{proof}
Let $c_{\la, k}(t_1, t_2) = (-1)^k/{k!} \cdot \sum_{\square\in D_\lambda} 
(a'(\square)t_1 + \ell'(\square)t_2)^k$. By \eqref{TT09161027Z.2},
\begin{eqnarray}  \label{TT10210727X.1}
  \fG_k(t_1, t_2) {\bf J}^\la(t_1, t_2)
= c_{\la, k}(t_1, t_2) \cdot {\bf J}^\la(t_1, t_2).
\end{eqnarray}

Next, let $\alpha = -t_1/t_2$. Put $\overline{\fB}_k^{(\alpha)} 
= \sum_{|\w \la| = 0} g_{\w \la, k}^{(\alpha)} {\fa_{\w \la}}$.
By \eqref{TT10210857Z.0}, we have
\begin{eqnarray*}   
  t_2^k \sum_{|\w \la| = 0} g_{\w \la, k}^{(\alpha)} {\fa_{\w \la}}
  J_\la^{(\alpha)} 
= c_{\la, k}(t_1, t_2) \cdot J_\la^{(\alpha)}.
\end{eqnarray*}
Put $J_\la^{(\alpha)} = \sum_{|\mu| = |\la|} f_{\la, \mu}^{(\alpha)} 
\fa_{-\mu}\vac$. Then, we obtain
$$
  t_2^k \sum_{|\w \la| = 0} g_{\w \la, k}^{(\alpha)} {\fa_{\w \la}} 
  \sum_{|\w \mu| = |\la|} f_{\la, \w \mu}^{(\alpha)} \fa_{-\w \mu}\vac
= c_{\la, k}(t_1, t_2) \cdot \sum_{|\mu| = |\la|} f_{\la, \mu}^{(\alpha)} 
  \fa_{-\mu}\vac. 
$$
For fixed $\w \la$ with $|\w \la| = 0$ and $\w \mu$ with $|\w \mu| = |\la|$, put
$$
  \fa_{\w \la} \fa_{-\w \mu}\vac
= \sum_{|\mu| = |\la|} h_{\w \la,\w \mu, \mu}^{(\alpha)} \fa_{-\mu}\vac.
$$
Note that $\delta(\w \la) + \ell(\w \mu) = \ell(\mu)$ 
if $h_{\w \la,\w \mu, \mu}^{(\alpha)} \ne 0$. We have
$$
  t_2^k \sum_{|\w \la| = 0} g_{\w \la, k}^{(\alpha)}  
  \sum_{|\w \mu| = |\la|} f_{\la, \w \mu}^{(\alpha)} 
  \sum_{|\mu| = |\la|} h_{\w \la,\w \mu, \mu}^{(\alpha)} \fa_{-\mu}\vac
= c_{\la, k}(t_1, t_2) \cdot \sum_{|\mu| = |\la|} f_{\la, \mu}^{(\alpha)} 
  \fa_{-\mu}\vac. 
$$
So for fixed $\la, k$ and $\mu$ with $|\mu| = |\la|$, we get
\begin{eqnarray}  \label{TT10210727X.2}
  t_2^k \sum_{|\w \la| = 0, |\w \mu| = |\la|} 
  g_{\w \la, k}^{(\alpha)} f_{\la, \w \mu}^{(\alpha)}
  h_{\w \la,\w \mu, \mu}^{(\alpha)}
= c_{\la, k}(t_1, t_2) f_{\la, \mu}^{(\alpha)}. 
\end{eqnarray}
By \eqref{JLaT1T2}, ${\bf J}^\la(t_1, t_2) 
= t_2^{|\la|} t_1^{\ell(\cdot)} J_\la^{(\alpha)}$. Therefore, 
$t_2^k t_1^{\delta(\cdot)} \overline{\fB}_k^{(\alpha)} {\bf J}^\la(t_1, t_2)$
is equal to
\begin{eqnarray*}
& &t_2^k t_1^{\delta(\cdot)} \overline{\fB}_k^{(\alpha)} 
   t_2^{|\la|}t_1^{\ell(\cdot)} J_\la^{(\alpha)}   \\
&=&t_2^{|\la|} t_2^k \sum_{|\w \la| = 0} g_{\w \la, k}^{(\alpha)} 
   t_1^{\delta(\w \la)}{\fa_{\w \la}}
   \sum_{|\w \mu| = |\la|} f_{\la, \w \mu}^{(\alpha)} 
   t_1^{\ell(\w \mu)}\fa_{-\w \mu}\vac   \\
&=&t_2^{|\la|} t_2^k \sum_{|\w \la| = 0} g_{\w \la, k}^{(\alpha)} 
   t_1^{\delta(\w \la)}
   \sum_{|\w \mu| = |\la|} f_{\la, \w \mu}^{(\alpha)} t_1^{\ell(\w \mu)} 
   \sum_{|\mu| = |\la|} h_{\w \la,\w \mu, \mu}^{(\alpha)} \fa_{-\mu}\vac
\end{eqnarray*}
Since $\delta(\w \la) + \ell(\w \mu) = \ell(\mu)$ 
if $h_{\w \la,\w \mu, \mu}^{(\alpha)} \ne 0$, we always have 
$$
  t_1^{\delta(\w \la)} t_1^{\ell(\w \mu)} h_{\w \la,\w \mu, \mu}^{(\alpha)}
= t_1^{\ell(\mu)} h_{\w \la,\w \mu, \mu}^{(\alpha)}.
$$
Hence we conclude that
$t_2^k t_1^{\delta(\cdot)} \overline{\fB}_k^{(\alpha)} {\bf J}^\la(t_1, t_2)$
is equal to
\begin{eqnarray*} 
& &t_2^{|\la|} t_2^k \sum_{|\mu| = |\la|} t_1^{\ell(\mu)}
   \left (\sum_{|\w \la| = 0, |\w \mu| = |\la|} 
   g_{\w \la, k}^{(\alpha)} f_{\la, \w \mu}^{(\alpha)} 
   h_{\w \la,\w \mu, \mu}^{(\alpha)} \right )\fa_{-\mu}\vac  \\
&=&t_2^{|\la|} \sum_{|\mu| = |\la|} t_1^{\ell(\mu)}
   c_{\la, k}(t_1, t_2) f_{\la, \mu}^{(\alpha)} \fa_{-\mu}\vac \\
&=&t_2^{|\la|} t_1^{\ell(\cdot)} c_{\la, k}(t_1, t_2)
   \sum_{|\mu| = |\la|} f_{\la, \mu}^{(\alpha)} \fa_{-\mu}\vac  \\
&=&t_2^{|\la|} t_1^{\ell(\cdot)} c_{\la, k}(t_1, t_2)
   J_\la^{(\alpha)}   \\
&=&c_{\la, k}(t_1, t_2) \cdot {\bf J}^\la(t_1, t_2)
\end{eqnarray*}
where we used \eqref{TT10210727X.2} in the second equality. 
Thus, \eqref{TT10210727X.0} holds.
\end{proof}

Setting $k = 1$ in \eqref{TT10210727X.0} and using \eqref{TT10210857Z.2},
we obtain \eqref{Intro_OP3DefMqt1t2}. Similarly, setting $k = 2$ 
in \eqref{TT10210727X.0} and using \eqref{TT10210857Z.3}, 
we conclude that $\fG_2(t_1, t_2)$ is equal to
$$
\sum_{\ell(\la) = 4, \, |\la| = 0} (-1)^{\ell(\la^+)-1} 
    (t_1t_2)^{\ell(\la^-)-1} \frac{\fa_\la}{\la^!}  
$$
$$
+ \sum_{\ell(\la) = 3, |\la| = 0} \frac{|\la^+|-1}{2} (-1)^{\ell(\la^+)} 
    (t_1t_2)^{\ell(\la^-)-1} (t_1+t_2) \frac{\fa_\la}{\la^!}
$$
\begin{eqnarray}    \label{BB02241101Z}
+ \sum_{i > 0} \left (\frac{2i^2 - 3i +1}{12}(t_1+t_2)^2 
+ \frac{1 - i^2}{12} t_1t_2 \right ) \fa_{-i}\fa_{i}.
\end{eqnarray}
This is the equivariant version of \eqref{Tang5.6} for 
the Chern character operator $\fG_2(\alpha)$.

\section{\bf Okounkov's Conjecture} 
\label{sect_Okounkov}

In this section, we investigate Okounkov's Conjecture 
\cite[Conjecture~2]{Oko} in the equivariant setting of the Hilbert schemes 
$\Xn$ where $X = \C^2$. We will recall the Ext vertex operators of 
Carlsson and Okounkov \cite{CO, Car1}, 
re-interpret Okounkov's reduced series, and prove 
Theorem~\ref{Intro_BB070340X} (= Theorem~\ref{BB070340X}).

Let $\T = (\C^*)^2$ and $L$ be a $\T$-equivariant line bundle over $X = \C^2$. 
Let $\mathbb E_L$ be the virtual vector bundle on $X^{[k]} \times X^{[\ell]}$
whose fiber at $(I, J) \in X^{[k]} \times X^{[\ell]}$ is 
$$
\mathbb E_L|_{(I,J)} = \chi(\mathcal O, L) - \chi(J, I \otimes L).
$$
Let $\fL_m$ be the trivial line bundle on $X$ with a scaling action of $\C^*$ of 
character $z^m$, and let $\Delta_n$ be the diagonal in $\Xn \times \Xn$. Then, 
\begin{eqnarray}   \label{ResOfEToD}
\mathbb E_{\fL_m}|_{\Delta_n} = T_{\Xn, m},
\end{eqnarray}
the tangent bundle $T_\Xn$ with a scaling action of $\C^*$ of character $z^m$. 
By abusing notations, we also use $L$ to denote its first Chern class. Put
\begin{eqnarray}   \label{GammaLzBB0513}
  \Gamma_{\pm}(L, z) 
= \exp \left ( \sum_{n>0} \frac{z^{\mp n}}{n} \fp_{\pm n}(L) \right ).         
\end{eqnarray}

\begin{remark}  \label{SignDiff}
There is a sign difference between the two nondegenerate bilinear forms 
$\langle \cdot, \cdot \rangle$ appearing in the Heisenberg commutation 
relations \eqref{Heisenberg} and \cite[(7)]{Car1}.
So for $n > 0$, our Heisenberg operators $\fp_{-n}(L)$ and $\fp_{n}(-L)$
are equal to the Heisenberg operators $\fa_{-n}(L)$ and $\fa_{n}(L)$ in \cite{Car1}.
Accordingly, our operators $\Gamma_-(L, z)$ and $\Gamma_+(-L, z)$ are equal to
the operators $\Gamma_-(L, z)$ and $\Gamma_+(L, z)$ in \cite{Car1}.
\end{remark}

Let $\Wb(L, z) \in \End(\fock')[[z, z^{-1}]]$ be 
the vertex operator constructed in \cite{CO, Car1} 
where $z$ is a formal variable. 
Following \cite{Car1}, $\Wb(L, z)$ is defined by 
\begin{eqnarray}   \label{def-WLz}
\langle \Wb(L, z) \eta, \xi \rangle 
= z^{\ell - k} \int_{X^{[k]} \times X^{[\ell]}}
   (\eta \otimes \xi) \, c_{k+\ell}(\mathbb E_L)
\end{eqnarray}
for $\eta \in H^*(X^{[k]})'_\T$ and $\xi \in H^*(X^{[\ell]})'_\T$,
where the equivariant integral is defined via localization.
By \cite[Theorem~1]{Car1} (see Remark~\ref{SignDiff}), we have
\begin{eqnarray}   \label{WLz}
\Wb(L, z) = \Gamma_-(L-K_X, z) \, \Gamma_+(-L, z).
\end{eqnarray}
Moreover, noting that our ${\bf J}^\la(t_1, t_2)$ defined by \eqref{JLaT1T2} 
is the same as the $\W J_\la$ defined in \cite[(14)]{Car1}, 
we conclude from \cite[(23)]{Car1} that
\begin{eqnarray}   \label{Car1(23)}
\Wb(\fL_m, z) {\bf J}^\la(t_1, t_2)
= a_{\la, \la} {\bf J}^\la(t_1, t_2) 
  + \sum_{\mu \ne \la} a_{\la, \mu} {\bf J}^\mu(t_1, t_2)
\end{eqnarray}
for some coefficients $a_{\la, \mu}$ with $\mu \ne \la$, 
where $a_{\la, \la}$ is defined by 
\begin{eqnarray}   \label{Car1(23).1}
\frac{\prod_{\square\in D_{\lambda}} 
  \big(m-t_1 a(\square) + t_2 (\ell(\square)+1)\big)
  \big(m+t_1 (a(\square)+1) - t_2 \ell(\square) \big)}
  {\prod_{\square\in D_{\lambda}} 
  \big(-t_1 a(\square) + t_2 (\ell(\square)+1)\big)
  \big(t_1 (a(\square)+1) - t_2 \ell(\square) \big)}.
\end{eqnarray}

In the next lemma, we rewrite $\Wb(\fL_m, z)$ in terms of 
the operators $\Gamma_\pm(z)$.

\begin{lemma}   \label{WLmz}
For the scaling action of $\C^*$ on $\fL_m$, let $H^*_{\C^*}({\rm pt}) = \C[t]$.
Then,
$$
\Wb(\fL_m, z) = \Gamma_-(z)^{mt+t_1+t_2} \, \Gamma_+(z)^{mt/(t_1t_2)}
$$
where the operators $\Gamma_\pm(z)$ are defined in \eqref{Intro_GammaLz}.
\end{lemma}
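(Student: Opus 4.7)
The plan is to apply the Carlsson--Okounkov formula \eqref{WLz} and evaluate each factor in the $\T \times \C^*$-equivariant setting of $X = \C^2$. Since $X$ is $\T$-equivariantly contractible to the origin $x$, the localized equivariant cohomology $H^*_{\T \times \C^*}(X)'$ is a free rank-one module over $\C(t_1, t_2, t)$ generated by $1_X$, so every class can be written as a scalar multiple of $1_X$. I will identify the three relevant classes: $c_1^{\T \times \C^*}(\fL_m) = mt \cdot 1_X$ from the defining scaling weight of $\fL_m$; $c_1^\T(K_X) = -(t_1+t_2) \cdot 1_X$ from the equivariant weights of $T_0 X$ (consistent with \eqref{Intro_TT08251244X}); and $[x] = t_1 t_2 \cdot 1_X$ as the equivariant Thom class of the origin.

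Next, using \eqref{TT09120954Z}, namely $\fa_{-k} = \fp_{-k}(1_X)$ and $\fa_k = -\fp_k([x])$, together with $[x] = t_1 t_2 \cdot 1_X$ and the linearity of $\fp_{\pm n}(\cdot)$ in its argument, I obtain $\fp_k(1_X) = -\fa_k/(t_1 t_2)$ for $k > 0$. Substituting these identities into the definition \eqref{GammaLzBB0513} of $\Gamma_\pm(L, z)$ and using linearity gives
\begin{eqnarray*}
\Gamma_-(\fL_m - K_X, z) &=& \exp\left(\sum_{n > 0} \frac{z^n}{n}(mt + t_1 + t_2) \fa_{-n}\right) = \Gamma_-(z)^{mt+t_1+t_2}, \\
\Gamma_+(-\fL_m, z) &=& \exp\left(\sum_{n > 0} \frac{z^{-n}}{n} \frac{mt}{t_1 t_2} \fa_n\right) = \Gamma_+(z)^{mt/(t_1 t_2)},
\end{eqnarray*}
where the second equalities use the definition \eqref{Intro_GammaLz} of $\Gamma_\pm(z)$. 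Plugging these into \eqref{WLz} yields the claimed product.

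The only real subtlety is the bookkeeping of sign conventions: one needs to verify that $K_X = -(t_1+t_2)\cdot 1_X$ and $[x] = t_1 t_2\cdot 1_X$ are the correct equivariant refinements compatible with the paper's action \eqref{Intro_TT08250207X}, the tautological weights \eqref{Intro_TT08251244X}, and the Heisenberg commutation relations implicit in \eqref{Heisenberg} and \eqref{TT09120954Z}. Once these identifications are in place, the computation is essentially mechanical and requires no new ideas beyond invoking the equivariant extension of \eqref{WLz} for the non-projective surface $\C^2$.
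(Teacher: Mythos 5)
Your proposal is correct and follows essentially the same route as the paper's proof: apply the Carlsson--Okounkov formula \eqref{WLz}, identify the localized equivariant classes $c_1(\fL_m) = mt\,1_X$, $K_X = -(t_1+t_2)1_X$ and $[x] = t_1t_2\,1_X$, and convert the operators $\fp_{\pm n}$ into $\fa_{\pm n}$ via \eqref{TT09120954Z}. The only cosmetic difference is that you rewrite $\fp_n(1_X)$ in terms of $\fa_n$ whereas the paper rewrites $c_1(\fL_m)$ as a multiple of $[x]$; the computations are identical.
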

\begin{proof}
Let $x \in X$ be the origin. 
By the action \eqref{Intro_TT08250207X} and localization, 
$$
K_X = \frac{-(t_1+t_2)}{t_1t_2} [x] = -(t_1+t_2) 1_X.
$$ 
Also, $c_1(\fL_m) = mt 1_X = mt/(t_1t_2) \cdot [x]$. 
By \eqref{WLz} and \eqref{GammaLzBB0513}, we obtain
\begin{eqnarray*}   
& &\Wb(\fL_m, z)   \\
&=&\Gamma_-(\fL_m-K_X, z) \, \Gamma_+(-\fL_m, z)   \\
&=&\exp \left ( \sum_{n>0} \frac{z^{n}}{n} \fp_{-n}((mt+t_1+t_2)1_X) \right )
   \cdot \exp \left ( \sum_{n>0} \frac{z^{-n}}{n} 
   \frac{mt}{t_1t_2} \fp_{n}(-[x]) \right )  \\
&=&\exp \left ( \sum_{n>0} \frac{z^{n}}{n} (mt+t_1+t_2)\fa_{-n} \right )
   \cdot \exp \left ( \sum_{n>0} \frac{z^{-n}}{n} 
   \frac{mt}{t_1t_2}\fa_n \right )    \\
&=&\Gamma_-(z)^{mt+t_1+t_2} \, \Gamma_+(z)^{mt/(t_1t_2)}
\end{eqnarray*}
where we have used \eqref{TT09120954Z} in the third equality.
\end{proof}

The two formal variables $m$ and $t$ can be combined in Lemma~\ref{WLmz}. 
Therefore, in the rest of the paper, we will set $t = 1$ in Lemma~\ref{WLmz}. 
By Lemma~\ref{WLmz}, 
\begin{eqnarray}   \label{WLmzT=1}
\Wb(\fL_m, z) = \Gamma_-(z)^{m+t_1+t_2} \, \Gamma_+(z)^{m/(t_1t_2)}.
\end{eqnarray}
Note that if we set $t_1 = -t_2 = 1$ and $z = 1$ further, 
then we recover the operator
$W(m, 1) = \Gamma_-(1)^m \, \Gamma_+(1)^{-m}$ in \cite[(38)]{Car1}.

Recall the operators 
$\fG_k(t_1, t_2) \in \End(\fock')$ from Definition~\ref{Intro_DefFG}.

\begin{definition}   \label{trace}
Let $\mathfrak n$ be the number-of-points operator, i.e., 
$\mathfrak n|_{H^*(X^{[n]})'_\T} =  n \, {\rm Id}$.
Let $\alpha = -t_1/t_2$.
For $N, k_1, \ldots, k_N \ge 0$, define the traces
\begin{eqnarray}
   \langle \ch_{k_1} \cdots \ch_{k_N} \rangle_{m, t_1, t_2}
&=&\Tr_{\fock'} q^{\mathfrak n} \, \Wb(\fL_m, z) \prod_{j=1}^N 
       \fG_{k_j}(t_1, t_2),     \label{trace.01}   \\
   {[\ch_{k_1} \cdots \ch_{k_N}]_{m, t_1, t_2}}
&=&\Tr_{\fock'} q^{\mathfrak n} \, \Wb(\fL_m, z) \prod_{j=1}^N
       t_2^{k_j} t_1^{\delta(\cdot)} \overline{\fB}_{k_j}^{(\alpha)}
       \label{trace.02}
\end{eqnarray}
where $\overline{\fB}_k^{(\alpha)} 
= \Coe_{w^k} \overline{\fB}(\w q, \w t^{-1})$ is from 
\eqref{Intro_OverlinefBkAlpha} with $\w q = e^{\alpha w}$ and $\w t = e^{w}$.
\end{definition} 

\begin{remark}   \label{rmk_trace}
\begin{enumerate}
\item[{\rm (i)}] By considering the conformal weights of 
the Heisenberg operators in $\Wb(\fL_m, z)$, we see that 
$\langle \ch_{k_1} \cdots \ch_{k_N} \rangle_{m, t_1, t_2}$ and 
$[\ch_{k_1} \cdots \ch_{k_N}]_{m, t_1, t_2}$ are independent of 
the formal variable $z \ne 0$. In particular, setting $z=1$ in 
\eqref{trace.01} and \eqref{trace.02}, we obtain from \eqref{WLmzT=1} that 
\begin{eqnarray}   
   {\langle \ch_{k_1} \cdots \ch_{k_N} \rangle_{m, t_1, t_2}}
&=&\Tr q^{\mathfrak n} \, 
       \Gamma_-(1)^{m+t_1+t_2} \, \Gamma_+(1)^{m/(t_1t_2)} \prod_{j=1}^N 
       \fG_{k_j}(t_1, t_2),     \label{trace.0100}   \\
   {[\ch_{k_1} \cdots \ch_{k_N}]_{m, t_1, t_2}}
&=&\Tr q^{\mathfrak n} \, 
       \Gamma_-(1)^{m+t_1+t_2} \, \Gamma_+(1)^{m/(t_1t_2)} \prod_{j=1}^N 
       t_2^{k_j} t_1^{\delta(\cdot)} \overline{\fB}_{k_j}^{(\alpha)}.
       \label{trace.0200}
\end{eqnarray}

\item[{\rm (ii)}] 
Using \eqref{def-WLz}, \eqref{ResOfEToD} and \eqref{Intro_DefFG.2},
we conclude as in \cite{Car1, Car2} and \cite[Lemma~5.2]{Qin1} that 
$\langle \ch_{k_1} \cdots \ch_{k_N} \rangle_{m, t_1, t_2}$ is equal to 
the generating series
$$
\sum_{n \ge 0} q^n \int_{X^{[n]}} \left (\prod_{j=1}^N \ch_{k_j}^\T
  \big ( \On \big ) \right ) e\big (T_{\Xn, m} \big )
$$
of equivariant integrals over the Hilbert schemes $X^{[n]}$.

\item[{\rm (iii)}]
By Theorem~\ref{TT10210727X}, if $k_1, \ldots, k_N \in 
\{0, 1, 2\}$, then 
\begin{eqnarray}   \label{rmk_trace.0}
  \langle \ch_{k_1} \cdots \ch_{k_N} \rangle_{m, t_1, t_2}
= [\ch_{k_1} \cdots \ch_{k_N}]_{m, t_1, t_2}.
\end{eqnarray}
Also, by definition, \eqref{rmk_trace.0} is trivially true when $N = 0$.
\end{enumerate}
\end{remark}  

We may express $\langle \ch_{k_1} \cdots \ch_{k_N} \rangle_{m, t_1, t_2}$
in terms of (usual) partitions.

\begin{lemma}   \label{BB11241134Z}
The series
$\langle \ch_{k_1} \cdots \ch_{k_N} \rangle_{m, t_1, t_2}$ is equal to
\begin{eqnarray}   \label{BB11241134Z.01}
\sum_\la q^{|\la|} a_{\la, \la} \prod_{j=1}^N 
\sum_{\square\in D_\lambda} \frac{(-1)^{k_j}}{k_j!} 
     (a'(\square)t_1 + \ell'(\square)t_2)^{k_j}
\end{eqnarray}
where $a_{\la, \la} \in \Q(t_1, t_2)[m]$ is from \eqref{Car1(23).1}.
In particular,
\begin{eqnarray}   \label{BB11241134Z.02}
\langle \ch_{k_1} \cdots \ch_{k_N} \rangle_{m, t_1, t_2}
\in \Q(t_1, t_2)[[m, q]].
\end{eqnarray}
\end{lemma}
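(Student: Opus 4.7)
The plan is to evaluate the trace \eqref{trace.0100} (with the specialization $z = 1$ justified by Remark~\ref{rmk_trace}~(i)) by reading off the diagonal entries of $q^{\mathfrak n}\,\Wb(\fL_m, 1)\,\prod_{j=1}^N \fG_{k_j}(t_1, t_2)$ in the basis $\{{\bf J}^\la(t_1, t_2)\}_\la$ of $\fock'$. Since the number operator $\mathfrak n$ acts on ${\bf J}^\la(t_1, t_2)$ by the scalar $|\la|$, and since each $\fG_{k_j}(t_1, t_2)$ acts diagonally by \eqref{TT09161027Z.2} with eigenvalue
$$
c_{\la, k_j}(t_1, t_2) := \sum_{\square \in D_\la}\frac{(-1)^{k_j}}{k_j!}\bigl(a'(\square)t_1 + \ell'(\square)t_2\bigr)^{k_j},
$$
the composite $q^{\mathfrak n}\prod_j \fG_{k_j}(t_1, t_2)$ is diagonal in this basis with eigenvalue $q^{|\la|}\prod_j c_{\la, k_j}(t_1, t_2)$.

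Next, apply \eqref{Car1(23)}, which furnishes the expansion
$$
\Wb(\fL_m, 1)\,{\bf J}^\la(t_1, t_2) = a_{\la,\la}\,{\bf J}^\la(t_1, t_2) + \sum_{\mu \ne \la} a_{\la,\mu}\,{\bf J}^\mu(t_1, t_2),
$$
with $a_{\la,\la}$ explicitly given by \eqref{Car1(23).1}. Composing with the diagonal operator obtained above, the ${\bf J}^\la(t_1, t_2)$-coefficient of $q^{\mathfrak n}\,\Wb(\fL_m, 1)\,\prod_j \fG_{k_j}(t_1, t_2)\,{\bf J}^\la(t_1, t_2)$ is exactly $q^{|\la|}\,a_{\la,\la}\,\prod_j c_{\la, k_j}(t_1, t_2)$. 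Summing over all partitions $\la$ yields \eqref{BB11241134Z.01}.

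For \eqref{BB11241134Z.02}, note that \eqref{Car1(23).1} expresses $a_{\la,\la}$ as a polynomial in $m$ of degree $2|\la|$ with coefficients in $\Q(t_1, t_2)$, while each factor $c_{\la, k_j}(t_1, t_2)$ is a polynomial in $t_1, t_2$ over $\Q$. Because all operators in the trace preserve the grading by $\mathfrak n$, the coefficient of $q^n$ in \eqref{BB11241134Z.01} is a finite sum over $\la \vdash n$ and therefore lies in $\Q(t_1, t_2)[m]$; hence the full series lies in $\Q(t_1, t_2)[m][[q]] \subset \Q(t_1, t_2)[[m, q]]$. There is no serious obstacle here: the argument is simply a diagonalization in the eigenbasis of the $\fG_{k_j}(t_1, t_2)$'s combined with the known upper-triangularity of $\Wb(\fL_m, z)$ from Carlsson--Okounkov, and the only subtle point is confirming the $z$-independence of the trace so as to pass safely to $z = 1$.
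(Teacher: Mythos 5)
Your proposal is correct and follows essentially the same route as the paper, which simply cites the diagonal action \eqref{TT09161027Z.2} of the operators $\fG_{k_j}(t_1,t_2)$ on the basis $\{{\bf J}^\la(t_1,t_2)\}$ together with the expansion \eqref{Car1(23)} of $\Wb(\fL_m,z)$ and reads off the diagonal of the trace. Your additional remarks on the $z$-independence and on why each $q^n$-coefficient is a polynomial in $m$ over $\Q(t_1,t_2)$ are accurate elaborations of details the paper leaves implicit.
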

\begin{proof}
Follows from \eqref{TT09161027Z.2}, \eqref{Car1(23)} and the definition of $\langle \ch_{k_1} \cdots \ch_{k_N} \rangle_{m, t_1, t_2}$.
\end{proof}

Unfortunately, it is almost impossible to study 
$\langle \ch_{k_1} \cdots \ch_{k_N} \rangle_{m, t_1, t_2}$ 
from \eqref{BB11241134Z.01}. Next, we introduce the reduced series. 
Whenever necessary, we will pretend that the expressions 
$m+t_1+t_2$ and $m/(t_1t_2)$ are integers. 
By \cite[Lemma~3]{Car2}, 
\begin{eqnarray}   \label{Lemma3InCar2}
\Tr_{\fock'} q^{\mathfrak n} \, \prod_{j=1}^\ell \Gamma_-(y_j)^{b_j}
\prod_{i=1}^k \Gamma_+(x_i)^{a_i} 
= (q;q)_\infty^{-1} \cdot \prod_{i, j} (y_jx_i^{-1}q;q)_\infty^{-a_ib_j}.
\end{eqnarray}
Applying this to \eqref{trace.0100} and \eqref{trace.0200}, we see that 
\begin{eqnarray}   \label{ExampleF01.1}
  \langle \rangle_{m, t_1, t_2} 
= [ \, ]_{m, t_1, t_2}
= (q;q)_\infty^{-1-(m+t_1+t_2)m/(t_1t_2)}.
\end{eqnarray}
Following \cite{Oko}, we define the {\it reduced series} to be
\begin{eqnarray}   
\langle \ch_{k_1} \cdots \ch_{k_N} \rangle_{m, t_1, t_2}'
&=&\frac{\langle \ch_{k_1} \cdots \ch_{k_N} 
   \rangle_{m, t_1, t_2}}{\langle \rangle_{m, t_1, t_2}}    \nonumber   \\
&=&(q;q)_\infty^{1+(m+t_1+t_2)m/(t_1t_2)} 
   \langle \ch_{k_1} \cdots \ch_{k_N} \rangle_{m, t_1, t_2},    
   \label{Reduced.1}   \\
{[\ch_{k_1} \cdots \ch_{k_N}]_{m, t_1, t_2}'}
&=&\frac{[\ch_{k_1} \cdots \ch_{k_N}]_{m, t_1, t_2}}
   {[ \, ]_{m, t_1, t_2}}    \nonumber   \\
&=&(q;q)_\infty^{1+(m+t_1+t_2)m/(t_1t_2)} 
   [\ch_{k_1} \cdots \ch_{k_N}]_{m, t_1, t_2}.    \label{Reduced.2}   
\end{eqnarray}
In view of \eqref{BB11241134Z.02}, we conclude that 
\begin{eqnarray}   \label{BB11241134Z.03}
\langle \ch_{k_1} \cdots \ch_{k_N} \rangle_{m, t_1, t_2}'
\in \Q(t_1, t_2)[[q, m]].
\end{eqnarray}

Using \eqref{Intro_G0t1t2}, \eqref{Intro_OP3DefMqt1t2} and 
\eqref{trace.0100}, we are able to compute the reduced series 
$\langle \ch_0 \rangle_{m, t_1, t_2}'$ and 
$\langle \ch_1 \rangle_{m, t_1, t_2}'$ directly. We have
\begin{eqnarray}  
   \langle \ch_0 \rangle_{m, t_1, t_2}'
&=&\left (1 + \frac{(m+t_1+t_2)m}{t_1t_2} \right ) [2], \label{BB11240203X} \\
   \langle \ch_1 \rangle_{m, t_1, t_2}'
&=&\left (t_1+t_2 + \frac{(m+t_1+t_2)(t_1+t_2)m}{t_1t_2} \right )
   \frac{[2] - 2[3]}{2}    \label{BB11240224X}
\end{eqnarray}
(see also \cite[Subsection~2.6]{Oko} for \eqref{BB11240224X}).
When $m=1$, these are the equivariant versions of 
the formulas (6.2) and (6.4) in \cite{AQ} respectively. 

\begin{definition}   \label{AA05210933Z}
For operators $\mathfrak f_1, \ldots, \mathfrak f_N$ and a subset $S$ of 
${\underline N} = \{1, \ldots, N\}$, 
if $S = \{s_1, \ldots, s_\ell \}$ with $s_1 < \ldots < s_\ell$,
then the product $\prod_{j \in S} \mathfrak f_j$ is defined to be
$$
\prod_{j=1}^{\ell} 
\mathfrak f_{s_j} = \mathfrak f_{s_1} \cdots \mathfrak f_{s_\ell}.
$$
\end{definition} 

Our goals are to study $[\ch_{k_1} \cdots \ch_{k_N}]_{m, t_1, t_2}$ 
and then to draw consequences about the reduced series
$\langle \ch_{k_1} \cdots \ch_{k_N} \rangle_{m, t_1, t_2}'$ 
by using Remark~\ref{rmk_trace}~(iii). 

\begin{lemma}   \label{BB05270929Z}
Let $\alpha = -t_1/t_2$. 
Then, $[\ch_{k_1} \cdots \ch_{k_N}]_{m, t_1, t_2}$ is equal to
$$
t_2^{\sum_{j=1}^N k_j} (q;q)_\infty^{-1-\frac{(m+t_1+t_2)m}{t_1t_2}} \cdot 
\Coe_{z_1^{k_1} y_1^0 \cdots z_N^{k_N} y_N^0}
\prod_{j=1}^N \frac{1}{(1-\w q_j)(1-\w t_j^{-1})}
$$
$$ 
\sum_{S \subset {\underline N}} (-1)^{|S|} \prod_{j \in S} 
\frac{(q)_\infty (\w t_j^{-1} \w q_j q)_\infty}
     {(\w q_j q)_\infty (\w t_j^{-1}q)_\infty}
\cdot \prod_{\ell, j \in S, \, \ell < j} 
\frac{(\w t_j y_j(\w t_\ell y_\ell)^{-1})_\infty
      (y_j(\w t_\ell \w q_\ell^{-1} y_\ell)^{-1})_\infty}
     {(\w t_j y_j(\w t_\ell \w q_\ell^{-1} y_\ell)^{-1})_\infty
      (y_j(\w t_\ell y_\ell)^{-1})_\infty}
$$
$$
\cdot \prod_{\ell, j \in S, \, \ell > j} 
\frac{(\w t_j y_j(\w t_\ell y_\ell)^{-1}q)_\infty
      (y_j(\w t_\ell \w q_\ell^{-1} y_\ell)^{-1}q)_\infty}
     {(\w t_j y_j(\w t_\ell \w q_\ell^{-1} y_\ell)^{-1}q)_\infty
      (y_j(\w t_\ell y_\ell)^{-1}q)_\infty} 
$$
$$
\cdot 
\prod_{j \in S} ((\w t_j \w q_j^{-1} y_j)^{-1}q)_\infty^{-(m+t_1+t_2)/t_1}
((\w t_j y_j)^{-1}q)_\infty^{(m+t_1+t_2)/t_1}
(\w t_j y_j)_\infty^{-m/t_2} (y_j)_\infty^{m/t_2}
$$
where $\w q_j = e^{\alpha z_j}$ and $\w t_j = e^{z_j}$ 
for every $1 \le j \le N$.
\end{lemma}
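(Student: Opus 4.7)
The plan is to start from Definition~\ref{trace}, write every factor of
$\prod_{j=1}^N t_2^{k_j}t_1^{\delta(\cdot)}\overline{\fB}_{k_j}^{(\alpha)}$
in vertex-operator form, rearrange the whole operator product inside the
trace into normal-ordered form with all $\Gamma_-$'s to the left of all
$\Gamma_+$'s by repeated use of Lemma~\ref{GammaPMComm}(ii), and then
evaluate the trace by \eqref{Lemma3InCar2}.

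The first step combines \eqref{Intro_TT10180917Z.0},
\eqref{Intro_OverlinefBkAlpha} and \eqref{CW1} to rewrite
$$\overline{\fB}_{k_j}^{(\alpha)}=\Coe_{z_j^{k_j}}\frac{1}{(1-\w q_j)(1-\w t_j^{-1})}
\Bigl(1-\Coe_{y_j^0}V(y_j;\w t_j,\w t_j^{-1},1,\w q_j\w t_j^{-1})\Bigr),$$
with $\w q_j=e^{\alpha z_j}$, $\w t_j=e^{z_j}$, and
$V(y_j;\w t_j,\w t_j^{-1},1,\w q_j\w t_j^{-1})=
\Gamma_-(\w t_j y_j)\Gamma_-(y_j)^{-1}
\Gamma_+(\w q_j^{-1}\w t_j y_j)\Gamma_+(\w t_j y_j)^{-1}$.
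Because $\delta(\la)=\ell(\la^-)-\ell(\la^+)$ counts creations minus
annihilations, $t_1^{\delta(\cdot)}$ sends $\fa_{-k}\mapsto t_1\fa_{-k}$
and $\fa_k\mapsto t_1^{-1}\fa_k$, so applied to the normal-ordered
$V(y_j;\ldots)$ above it simply raises the $\Gamma_\pm$ exponents:
$$V_j':=t_1^{\delta(\cdot)}V(y_j;\ldots)
=\Gamma_-(\w t_j y_j)^{t_1}\Gamma_-(y_j)^{-t_1}
\Gamma_+(\w q_j^{-1}\w t_j y_j)^{t_1^{-1}}\Gamma_+(\w t_j y_j)^{-t_1^{-1}}.$$
Expanding
$\prod_{j=1}^N(1-V_j')=\sum_{S\subset\underline N}(-1)^{|S|}\prod_{j\in S}V_j'$
(in the natural ordering of $S$) reduces the computation to evaluating,
for each $S$, the trace
$\mathcal T_S:=\Tr\,q^{\mathfrak n}\Gamma_-(1)^{m+t_1+t_2}\Gamma_+(1)^{m/(t_1t_2)}\prod_{j\in S}V_j'$.

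To evaluate $\mathcal T_S$, two families of commutations bring the operator
into normal-ordered form: moving $\Gamma_+(1)^{m/(t_1t_2)}$ past the two
$\Gamma_-$-factors of every $V_j'$, and, for $j_a<j_b$ in $S$, moving each
of the two $\Gamma_+$-factors of $V_{j_a}'$ past the two
$\Gamma_-$-factors of $V_{j_b}'$. Each commutation contributes a scalar
$(1-z^{-1}y)^{-ab}$ by Lemma~\ref{GammaPMComm}(ii); \eqref{Lemma3InCar2} then
yields $(q;q)_\infty^{-1}$ together with $(yx^{-1}q;q)_\infty^{-ab}$ for
every pair of a $\Gamma_-(y)^b$ (on the left) and a $\Gamma_+(x)^a$ (on
the right). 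Where a commutation has occurred, its $n=0$ factor combines
with the trace factor into the full product $(yx^{-1};q)_\infty^{-ab}$;
where no commutation is needed, only the shifted $(yx^{-1}q;q)_\infty^{-ab}$
appears.

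A pair-by-pair identification then produces exactly the five groups of
factors in the stated formula: the background pair
$(\Gamma_-(1)^{m+t_1+t_2},\Gamma_+(1)^{m/(t_1t_2)})$ supplies the prefactor
$(q;q)_\infty^{-1-(m+t_1+t_2)m/(t_1t_2)}$; the four internal trace pairs
of a single $V_j'$ assemble into
$(q)_\infty(\w t_j^{-1}\w q_j q)_\infty/\bigl((\w q_j q)_\infty(\w t_j^{-1}q)_\infty\bigr)$;
the four cross pairs with $\ell<j$ in $S$ combine commutation and trace
into the full $(\cdot)_\infty$ products in the $\ell<j$ range, while the
four cross pairs with $\ell>j$ give the shifted $(\cdot q)_\infty$
products in the $\ell>j$ range; and the remaining $m$-dependent factors
arise from commuting $\Gamma_+(1)^{m/(t_1t_2)}$ past the $\Gamma_-$'s of
each $V_j'$ (giving $(\w t_j y_j)_\infty^{-m/t_2}(y_j)_\infty^{m/t_2}$)
together with pairing $\Gamma_-(1)^{m+t_1+t_2}$ with the two $\Gamma_+$'s
of each $V_j'$ in the trace (giving the $(m+t_1+t_2)/t_1$-exponent
factors). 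The tedious bookkeeping of signs and exponents, together with
checking that the only cross-commutations are those of the stated form
(so that the two ranges $\ell<j$ and $\ell>j$ are delimited exactly),
will be the main obstacle; the rest of the argument is routine
manipulation with vertex operators.
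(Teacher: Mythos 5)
Your proposal is correct and follows essentially the same route as the paper's proof: rewriting $t_1^{\delta(\cdot)}\overline{\fB}_{k_j}^{(\alpha)}$ via \eqref{TT10180917Z.0} and \eqref{CW1} as $\Coe_{z_j^{k_j}y_j^0}$ of $1-\Gamma_-(\w t_jy_j)^{t_1}\Gamma_-(y_j)^{-t_1}\Gamma_+(\w t_j\w q_j^{-1}y_j)^{1/t_1}\Gamma_+(\w t_jy_j)^{-1/t_1}$, expanding $\prod_j(1-V_j')$ over subsets $S$, normal-ordering with Lemma~\ref{GammaPMComm}~(ii), and evaluating the resulting trace with \eqref{Lemma3InCar2}. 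The bookkeeping you defer is exactly the computation the paper carries out, and your identification of which $(\Gamma_-,\Gamma_+)$ pairs acquire full versus $q$-shifted Pochhammer factors (and of where the $m$-dependent exponents come from) is the correct one.
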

\begin{proof}
Recall from \eqref{Intro_OverlinefBkAlpha} that $\overline{\fB}_k^{(\alpha)} 
= \Coe_{z^k} \overline{\fB}(\w q, \w t^{-1})$ 
where $\w q = e^{\alpha z}$ and $\w t = e^{z}$. 
By \eqref{TT10180917Z.0} and \eqref{CW1}, 
$t_1^{\delta(\cdot)} \overline{\fB}_k^{(\alpha)}$ is equal to
\begin{eqnarray*}    
& &t_1^{\delta(\cdot)} \Coe_{z^k} \, \frac{1}{(1-\w q)(1-\w t^{-1})} 
  \big (1 - V_0(y; \w t, \w t^{-1}, 1, \w q \w t^{-1})\big )  \\
&=&t_1^{\delta(\cdot)} \Coe_{z^ky^0} \, \frac{1}{(1-\w q)(1-\w t^{-1})} 
  \big (1 - V(y; \w t, \w t^{-1}, 1, \w q \w t^{-1})\big )  \\
&=&t_1^{\delta(\cdot)} \Coe_{z^ky^0} \, \frac{1}{(1-\w q)(1-\w t^{-1})} 
   \big (1 - \Gamma_-(\w t y) \Gamma_-(y)^{-1} 
   \Gamma_+(\w t \w q^{-1}y) \Gamma_+(\w t y)^{-1}\big )   \\
&=&\Coe_{z^k y^0} \frac{1}{(1-\w q)(1-\w t^{-1})} 
   \big (1 - \Gamma_-(\w t y)^{t_1} \Gamma_-(y)^{-t_1} 
   \Gamma_+(\w t \w q^{-1}y)^{t_1^{-1}} \Gamma_+(\w t y)^{-t_1^{-1}}\big ).
\end{eqnarray*}
Combining with \eqref{trace.0200}, we see that
$[\ch_{k_1} \cdots \ch_{k_N}]_{m, t_1, t_2}$ is equal to
$$
t_2^{\sum_{j=1}^N k_j} \cdot 
\Coe_{z_1^{k_1} y_1^0 \cdots z_N^{k_N} y_N^0}
\prod_{j=1}^N \frac{1}{(1-\w q_j)(1-\w t_j^{-1})} 
\Tr \, q^{\mathfrak n} \, \Gamma_-(1)^{m+t_1+t_2} 
\Gamma_+(1)^{\frac{m}{t_1t_2}}
$$
\begin{eqnarray}   \label{BB07200802X}
\prod_{j=1}^N \big (1 - \Gamma_-(\w t_j y_j)^{t_1} \Gamma_-(y_j)^{-t_1} 
  \Gamma_+(\w t_j \w q_j^{-1} y_j)^{t_1^{-1}} 
  \Gamma_+(\w t_j y_j)^{-t_1^{-1}}\big ).
\end{eqnarray}
In view of the notation $\prod_{j \in S}$ in Definition~\ref{AA05210933Z}, 
$[\ch_{k_1} \cdots \ch_{k_N}]_{m, t_1, t_2}$ is equal to
\begin{eqnarray}   \label{BB05270929Z.2}
t_2^{\sum_{j=1}^N k_j} \cdot 
\Coe_{z_1^{k_1} y_1^0 \cdots z_N^{k_N} y_N^0}
\prod_{j=1}^N \frac{1}{(1-\w q_j)(1-\w t_j^{-1})} 
\cdot \sum_{S \subset {\underline N}} (-1)^{|S|}
\end{eqnarray}
$$
\Tr \, q^{\mathfrak n} \, \Gamma_-(1)^{m+t_1+t_2} 
\Gamma_+(1)^{\frac{m}{t_1t_2}}
\prod_{j \in S} \Gamma_-(\w t_j y_j)^{t_1} \Gamma_-(y_j)^{-t_1} 
  \Gamma_+(\w t_j \w q_j^{-1} y_j)^{\frac1{t_1}} 
  \Gamma_+(\w t_j y_j)^{-\frac1{t_1}}.
$$

By Lemma~\ref{GammaPMComm}~(ii), 
the trace below line \eqref{BB05270929Z.2} is equal to
$$
\prod_{j \in S} (1-\w t_j y_j)^{-m/t_2} (1-y_j)^{m/t_2} 
\cdot \prod_{\ell, j \in S, \, \ell < j} 
\frac{(1-\w t_j y_j(\w t_\ell y_\ell)^{-1})
       (1-y_j(\w t_\ell \w q_\ell^{-1} y_\ell)^{-1})}
      {(1-\w t_j y_j(\w t_\ell \w q_\ell^{-1} y_\ell)^{-1})
       (1-y_j(\w t_\ell y_\ell)^{-1})}
$$
$$
\Tr q^{\mathfrak n} \Gamma_-(1)^{m+t_1+t_2} 
\prod_{j \in S} \Gamma_-(\w t_j y_j)^{t_1} \Gamma_-(y_j)^{-t_1} 
\cdot \Gamma_+(1)^{\frac{m}{t_1t_2}} 
\prod_{j \in S} \Gamma_+(\w t_j \w q_j^{-1} y_j)^{\frac1{t_1}} 
\Gamma_+(\w t_j y_j)^{-\frac1{t_1}}.
$$
Therefore, using \eqref{Lemma3InCar2}, we see that 
the trace below line \eqref{BB05270929Z.2} is equal to
$$
(q;q)_\infty^{-1-(m+t_1+t_2)m/(t_1t_2)}
\cdot \prod_{\ell, j \in S, \, \ell < j} 
\frac{(\w t_j y_j(\w t_\ell y_\ell)^{-1})_\infty
      (y_j(\w t_\ell \w q_\ell^{-1} y_\ell)^{-1})_\infty}
     {(\w t_j y_j(\w t_\ell \w q_\ell^{-1} y_\ell)^{-1})_\infty
      (y_j(\w t_\ell y_\ell)^{-1})_\infty}
$$
$$
\cdot \prod_{\ell, j \in S, \, \ell > j} 
\frac{(\w t_j y_j(\w t_\ell y_\ell)^{-1}q)_\infty
      (y_j(\w t_\ell \w q_\ell^{-1} y_\ell)^{-1}q)_\infty}
     {(\w t_j y_j(\w t_\ell \w q_\ell^{-1} y_\ell)^{-1}q)_\infty
      (y_j(\w t_\ell y_\ell)^{-1}q)_\infty} 
\cdot \prod_{j \in S} 
\frac{(q)_\infty (\w t_j^{-1} \w q_j q)_\infty}
     {(\w q_j q)_\infty (\w t_j^{-1}q)_\infty}
$$
$$
\cdot 
\prod_{j \in S} ((\w t_j \w q_j^{-1} y_j)^{-1}q)_\infty^{-(m+t_1+t_2)/t_1}
((\w t_j y_j)^{-1}q)_\infty^{(m+t_1+t_2)/t_1}
(\w t_j y_j)_\infty^{-m/t_2} (y_j)_\infty^{m/t_2}.
$$
Combining with line \eqref{BB05270929Z.2}, 
we complete the proof of the lemma.
\end{proof}

\begin{proposition}   \label{BB06111137Z}
Let $\alpha = -t_1/t_2$. Then, 
$[\ch_{k_1} \cdots \ch_{k_N}]_{m, t_1, t_2}'$ is equal to
$$
t_2^{\sum_{j=1}^N k_j} \cdot \sum_{S \subset {\underline N}} (-1)^{|S|} 
\Coe_{z_1^{k_1} y_1^0 \cdots z_N^{k_N} y_N^0} 
\prod_{j=1}^N \frac{1}{(1-\w q_j)(1-\w t_j^{-1})} \cdot
\prod_{j \in S} 
\frac{(q)_\infty (\w q_j \w t_j^{-1}q)_\infty}
{(\w q_j q)_\infty
(\w t_j^{-1}q)_\infty}
$$
$$
\cdot \prod_{\substack{\ell, j \in S\\\ell < j}} 
\frac{(\w t_j y_j(\w t_\ell y_\ell)^{-1})_\infty
(y_j(\w t_\ell \w q_\ell^{-1} y_\ell)^{-1})_\infty}
{(\w t_j y_j(\w t_\ell \w q_\ell^{-1} y_\ell)^{-1})_\infty
(y_j(\w t_\ell y_\ell)^{-1})_\infty}
\frac{(\w t_\ell y_\ell(\w t_j y_j)^{-1}q)_\infty
(y_\ell(\w t_j \w q_j^{-1} y_j)^{-1}q)_\infty}
{(\w t_\ell y_\ell(\w t_j \w q_j^{-1} y_j)^{-1}q)_\infty
(y_\ell(\w t_j y_j)^{-1}q)_\infty}
$$
$$
\cdot \prod_{j \in S} 
\left ( \frac{(\w t_j y_j)_\infty}{(y_j)_\infty} \right )^{-m/t_2}
\left ( \frac{((\w t_j \w q_j^{-1} y_j)^{-1}q)_\infty}
  {((\w t_j y_j)^{-1}q)_\infty} \right )^{-(m+t_1+t_2)/t_1}.
$$
where $\w q_j = e^{\alpha z_j}$ and $\w t_j = e^{z_j}$ 
for every $1 \le j \le N$.
\end{proposition}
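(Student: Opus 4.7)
The plan is to derive this statement directly from Lemma~\ref{BB05270929Z} by substituting the formula for $[\ch_{k_1} \cdots \ch_{k_N}]_{m, t_1, t_2}$ into the definition \eqref{Reduced.2} of the reduced series and then rewriting the result in the symmetric form displayed in the proposition.

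First, I would apply \eqref{Reduced.2}, namely
\[
[\ch_{k_1} \cdots \ch_{k_N}]_{m, t_1, t_2}'
= (q;q)_\infty^{1+(m+t_1+t_2)m/(t_1t_2)}\,[\ch_{k_1} \cdots \ch_{k_N}]_{m, t_1, t_2},
\]
so the overall prefactor $(q;q)_\infty^{-1-(m+t_1+t_2)m/(t_1t_2)}$ that appears in Lemma~\ref{BB05270929Z} cancels cleanly, leaving the four $q$-Pochhammer blocks labelled by $S$, $(\ell,j)\in S$ with $\ell<j$, $(\ell,j)\in S$ with $\ell>j$, and $j\in S$. The remaining task is a purely notational reorganization; nothing new needs to be proved.

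Next, I would merge the two double products (over $\ell<j$ and $\ell>j$) into a single product over $\ell<j$. In the $\ell>j$ factor
\[
\prod_{\ell,j\in S,\ \ell>j}
\frac{(\w t_j y_j(\w t_\ell y_\ell)^{-1}q)_\infty
      (y_j(\w t_\ell \w q_\ell^{-1} y_\ell)^{-1}q)_\infty}
     {(\w t_j y_j(\w t_\ell \w q_\ell^{-1} y_\ell)^{-1}q)_\infty
      (y_j(\w t_\ell y_\ell)^{-1}q)_\infty},
\]
I would interchange the dummy indices $\ell\leftrightarrow j$, which converts it to a product over $\ell<j$ equal to
\[
\prod_{\ell,j\in S,\ \ell<j}
\frac{(\w t_\ell y_\ell(\w t_j y_j)^{-1}q)_\infty
      (y_\ell(\w t_j \w q_j^{-1} y_j)^{-1}q)_\infty}
     {(\w t_\ell y_\ell(\w t_j \w q_j^{-1} y_j)^{-1}q)_\infty
      (y_\ell(\w t_j y_j)^{-1}q)_\infty},
\]
which is precisely the second bracket inside the $\ell<j$ product of the proposition. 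The first bracket of that product, as well as the $\prod_{j\in S} (q)_\infty(\w q_j\w t_j^{-1}q)_\infty/\bigl((\w q_jq)_\infty(\w t_j^{-1}q)_\infty\bigr)$ factor (after the trivial reordering $\w t_j^{-1}\w q_j=\w q_j\w t_j^{-1}$), already appear in Lemma~\ref{BB05270929Z}.

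Finally, I would collect the four $m$-dependent Pochhammer powers in the last line of Lemma~\ref{BB05270929Z} into the two ratios appearing in the proposition, using
\[
\left(\tfrac{(\w t_j y_j)_\infty}{(y_j)_\infty}\right)^{-m/t_2}
=(\w t_j y_j)_\infty^{-m/t_2}(y_j)_\infty^{m/t_2},
\qquad
\left(\tfrac{((\w t_j \w q_j^{-1} y_j)^{-1}q)_\infty}{((\w t_j y_j)^{-1}q)_\infty}\right)^{-(m+t_1+t_2)/t_1}
\]
\[
=((\w t_j \w q_j^{-1} y_j)^{-1}q)_\infty^{-(m+t_1+t_2)/t_1}
 ((\w t_j y_j)^{-1}q)_\infty^{(m+t_1+t_2)/t_1},
\]
which match term by term. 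There is no serious obstacle here: the whole argument is a substitution plus an index relabelling, and the main care required is bookkeeping of subscripts in the multi-index coefficient extraction $\Coe_{z_1^{k_1}y_1^0\cdots z_N^{k_N}y_N^0}$, which passes through unchanged.
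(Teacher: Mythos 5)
Your proposal is correct and follows essentially the same route as the paper, which simply states that the proposition follows immediately from Lemma~\ref{BB05270929Z} and \eqref{ExampleF01.1}; your extra steps (cancelling the $(q;q)_\infty$ prefactor via \eqref{Reduced.2}, relabelling $\ell\leftrightarrow j$ in the $\ell>j$ product, and collecting the $m$-dependent Pochhammer powers into ratios) are exactly the bookkeeping the paper leaves implicit.
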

\begin{proof}
Follows immediately from Lemma~\ref{BB05270929Z} and \eqref{ExampleF01.1}.
\end{proof}

\begin{theorem} \label{BB11250927Z}
Let $k_1, \ldots, k_N$ be nonnegative integers. Then, 
$$
[\ch_{k_1} \cdots \ch_{k_N}]_{m, t_1, t_2}' \in \BD(t_1, t_2)[m]
$$
with both degree (in $m$) and weight at most $\sum_{i=1}^N (k_i + 2)$.
\end{theorem}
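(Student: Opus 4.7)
The plan is to work directly from the closed formula of Proposition~\ref{BB06111137Z} and to analyze the dependence on $m$ and on $q$ separately, using the identity $\log(a;q)_\infty = -\sum_{k \ge 1} a^k/(k(1-q^k))$ to expand every Pochhammer symbol into double sums that match Definition~\ref{def-BiBracket} of the bi-brackets. With $\w t_j = e^{z_j}$ and $\w q_j = e^{\alpha z_j}$ (where $\alpha = -t_1/t_2$), the parameter $m$ enters the integrand only through two exponential blocks per $j \in S$:
\[
E_j^{(1)} = \left(\frac{(\w t_j y_j)_\infty}{(y_j)_\infty}\right)^{-m/t_2},
\qquad
E_j^{(2)} = \left(\frac{((\w t_j \w q_j^{-1} y_j)^{-1}q)_\infty}
                       {((\w t_j y_j)^{-1}q)_\infty}\right)^{-(m+t_1+t_2)/t_1},
\]
each of which equals $\exp$ of an $m$-linear quantity that vanishes at $z_j = 0$.

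First, I would establish polynomiality in $m$ and the degree bound. The prefactor $1/((1-\w q_j)(1-\w t_j^{-1})) = (t_2/t_1)\, z_j^{-2}(1 + O(z_j))$ has a double pole at $z_j = 0$, so $\Coe_{z_j^{k_j}}$ needs the regular part of the integrand only through order $z_j^{k_j+2}$. Since the logarithms driving $E_j^{(1)} E_j^{(2)}$ vanish at $z_j = 0$, only the first $k_j + 2$ terms of the Taylor expansion in $m$ can contribute at slot $j$, giving both polynomiality and the bound $\deg_m \le \sum_j (k_j + 2)$. Next I would expand every remaining $(a;q)_\infty^{\pm 1}$ through its logarithm and substitute the Taylor series of $\w t_j^u$ and $\w q_j^u$ in $z_j$; after extracting $\Coe_{z_j^{k_j} y_j^0}$, standard reordering of the summation indices (as in \cite{Bac, BK2}) identifies the resulting $q$-series as $\mathbb Q(t_1,t_2)$-linear combinations of bi-brackets. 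The weight of each bi-bracket contribution equals the total exponent sum $\sum_i(s_i + r_i)$, and the matching of $z_j$-powers forces this sum at slot $j$ to be at most $k_j + 2$, giving the global weight bound $\sum_j (k_j + 2)$.

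The main obstacle will be controlling the crossed Pochhammer factors
\[
\frac{(\w t_j y_j(\w t_\ell y_\ell)^{-1})_\infty\,
      (y_j(\w t_\ell \w q_\ell^{-1} y_\ell)^{-1})_\infty}
     {(\w t_j y_j(\w t_\ell \w q_\ell^{-1} y_\ell)^{-1})_\infty\,
      (y_j(\w t_\ell y_\ell)^{-1})_\infty}
\]
and their $q$-shifted counterparts, which couple distinct slots $\ell \ne j$ through the variables $y_\ell, y_j, z_\ell, z_j$: one must verify that their contribution to the $\Coe_{z_j^{k_j} y_j^0}$ extraction still respects the per-slot budget of $k_j + 2$ in both $m$-degree and bi-bracket weight. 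A secondary subtlety is the elimination of the prefactor $1/t_1$ in the exponent of $E_j^{(2)}$: the lowest-order term of $\log E_j^{(2)}$ begins at $z_j^1$, so each $1/t_1$ is absorbed by the factor of $t_1$ produced by expanding $\w q_j^u = e^{u\alpha z_j}$, ensuring the answer lies in $\BD(t_1,t_2)[m]$ rather than in a larger localization.
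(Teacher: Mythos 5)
Your starting point (the closed formula of Proposition~\ref{BB06111137Z}) and your analysis of the $m$-dependence are sound: the two $m$-dependent exponential blocks have logarithms vanishing at $z_j=0$, the prefactor contributes a double pole in $z_j$, and the resulting budget argument does give polynomiality in $m$ of degree at most $\sum_j(k_j+2)$. This matches in spirit what the paper does, except that the paper first substitutes $\w t_jy_j\mapsto y_j$ to reach the normal form \eqref{BB11250927Z.1} and then invokes the external result \cite[Theorems~1.1 and 1.3~(i)]{Qin3}, which is where all of the actual $q$-series analysis lives.

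The genuine gap is exactly the step you label ``the main obstacle'' and then leave unverified: showing that the constant-term extraction $\Coe_{\prod_{j\in S}y_j^0}$ applied to the crossed Pochhammer factors produces elements of $\BD$ of controlled weight. After taking logarithms, each crossed pair becomes the exponential of a sum of the form $\sum_{k\ge1}\frac{(1-\w q_\ell^{\,k})(1-\w t_j^{-k})}{k(1-q^k)}\bigl(w^k+q^kw^{-k}\bigr)$ with $w=y_jy_\ell^{-1}$; extracting $\Coe_{w^0}$ of this exponential forces a cancellation between positive and negative powers of $w$, and reorganizing the resulting multi-indexed sums into bi-brackets (with the ordering conditions $u_1>\cdots>u_\ell$ and the correct weight count) is the substantive content of \cite[Theorem~1.1]{Qin3}. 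It is a Bloch--Okounkov-type theta-function computation, not a ``standard reordering'' in the sense of \cite{Bac, BK2}, which handle a single chain of nested sums rather than the constant term of a product of theta-like quotients coupling several slots. Until that step is carried out (or the result of \cite{Qin3} is cited), the membership in $\BD(t_1,t_2)[m]$ and the weight bound $\sum_j(k_j+2)$ are not established. A minor point: your worry about the localization in $t_1$ is moot here, since $\BD(t_1,t_2)$ already allows arbitrary rational coefficients in $t_1,t_2$; polynomiality in $t_1,t_2$ only becomes an issue in Theorem~\ref{BB070340X}~(i), where $m=0$.
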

\begin{proof}
Let $\alpha = -t_1/t_2$. For $1 \le j \le N$, 
let $\w q_j = e^{\alpha z_j}$ and $\w t_j = e^{z_j}$. 
Replacing every $\w t_j y_j$ by $y_j$, we see from 
Proposition~\ref{BB06111137Z} that 
$[\ch_{k_1} \cdots \ch_{k_N}]_{m, t_1, t_2}'$ is equal to
$$
t_2^{\sum_{j=1}^N k_j} \cdot \sum_{S \subset {\underline N}} (-1)^{|S|} 
\Coe_{z_1^{k_1} \cdots z_N^{k_N}} 
\prod_{j=1}^N \frac{1}{(1-\w q_j)(1-\w t_j^{-1})} \cdot
\prod_{j \in S} \frac{(q)_\infty (\w q_j \w t_j^{-1}q)_\infty}
{(\w q_j q)_\infty (\w t_j^{-1}q)_\infty}
$$
$$
\cdot \Coe_{\prod_{j \in S} y_j^0}
\prod_{\substack{\ell, j \in S\\\ell < j}} 
\frac{(\w q_\ell \w t_j^{-1} y_j y_\ell^{-1})_\infty 
(y_jy_\ell^{-1})_\infty}{(\w q_\ell y_j y_\ell^{-1})_\infty
(\w t_j^{-1} y_j y_\ell^{-1})_\infty}
\frac{(\w q_j \w t_\ell^{-1} y_\ell y_j^{-1}q)_\infty 
(y_\ell y_j^{-1}q)_\infty}
{(\w q_j y_\ell y_j^{-1}q)_\infty (\w t_\ell^{-1} y_\ell y_j^{-1}q)_\infty}
$$
\begin{eqnarray}   \label{BB11250927Z.1}
\cdot \prod_{j \in S} 
\left ( \frac{(\w t_j^{-1} y_j)_\infty}{(y_j)_\infty} \right )^{m/t_2}
\left ( \frac{(\w q_j y_j^{-1}q)_\infty}
  {(y_j^{-1}q)_\infty} \right )^{-(m+t_1+t_2)/t_1}.
\end{eqnarray}   
Since $\w q_j = e^{\alpha z_j}$ and $\w t_j = e^{z_j}$ for 
every $1 \le j \le N$, we have
\begin{eqnarray}   \label{BB11250927Z.2}
    \prod_{j=1}^N \frac{1}{(1-\w q_j)(1-\w t_j^{-1})}
\in \prod_{j=1}^N \frac{1}{\alpha z_j^2} \cdot 
    \Q[[z_j, \alpha z_j|1 \le j \le N]].
\end{eqnarray}
By \cite[Theorem~1.1 and Theorem~1.3 (i)]{Qin3}, 
$
[\ch_{k_1} \cdots \ch_{k_N}]_{m, t_1, t_2}' \in \BD(t_1, t_2)[m]
$
with both degree (in $m$) and weight at most $\sum_{i=1}^N (k_i + 2)$.
\end{proof}

Next, we study the special cases 
$[\ch_{k_1} \cdots \ch_{k_N}]_{0, t_1, t_2}'$
and $\langle \ch_{k_1} \cdots \ch_{k_N} \rangle_{0, t_1, t_2}'$.
Even though we do not know how to compute 
$\langle \ch_{k_1} \cdots \ch_{k_N} \rangle_{m, t_1, t_2}'$ 
for a general $m$, the special case 
$\langle \ch_{k_1} \cdots \ch_{k_N} \rangle_{0, t_1, t_2}'$ 
can be calculated rather easily.

\begin{lemma}   \label{CaseM=0}
Let $\alpha = -t_1/t_2$. Let $\w q_j = e^{\alpha z_j}$ and 
$\w t_j = e^{z_j}$ for every $1 \le j \le N$. 
Fix integers $k_1, \ldots, k_N \ge 0$.
Then the following three expressions are equal:
\begin{enumerate}
\item[{\rm (i)}]
$\langle \ch_{k_1} \cdots \ch_{k_N} \rangle_{0, t_1, t_2}'$;

\item[{\rm (ii)}]
$[\ch_{k_1} \cdots \ch_{k_N}]_{0, t_1, t_2}'$;

\item[{\rm (iii)}]
$\displaystyle{
t_2^{\sum_{j=1}^N k_j} \cdot \sum_{S \subset {\underline N}} (-1)^{|S|} 
\Coe_{z_1^{k_1} \cdots z_N^{k_N}} 
\prod_{j=1}^N \frac{1}{(1-\w q_j)(1-\w t_j^{-1})} \cdot
\prod_{j \in S} \frac{(q)_\infty (\w q_j \w t_j^{-1}q)_\infty}
{(\w q_j q)_\infty (\w t_j^{-1}q)_\infty}
}$
\begin{eqnarray}    \label{CaseM=0.0}
\cdot \Coe_{\prod_{j \in S} y_j^0}
\prod_{\substack{\ell, j \in S\\\ell < j}} 
\frac{(\w q_\ell \w t_j^{-1} y_j y_\ell^{-1})_\infty 
(y_jy_\ell^{-1})_\infty}{(\w q_\ell y_j y_\ell^{-1})_\infty
(\w t_j^{-1} y_j y_\ell^{-1})_\infty}
\frac{(\w q_j \w t_\ell^{-1} y_\ell y_j^{-1}q)_\infty 
(y_\ell y_j^{-1}q)_\infty}{(\w q_j y_\ell y_j^{-1}q)_\infty 
(\w t_\ell^{-1} y_\ell y_j^{-1}q)_\infty}.
\end{eqnarray}   
\end{enumerate}
\end{lemma}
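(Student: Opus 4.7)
My plan is to split the assertion into the two non-trivial equalities (ii) = (iii) and (i) = (ii), handling each separately.

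For (ii) = (iii), I will start from the explicit formula \eqref{BB11250927Z.1} for $[\ch_{k_1}\cdots\ch_{k_N}]_{m,t_1,t_2}'$ (the form obtained in the proof of Theorem~\ref{BB11250927Z} after the substitution $\w t_j y_j \to y_j$) and specialize to $m=0$. The first $m$-dependent factor $\bigl((\w t_j^{-1} y_j)_\infty/(y_j)_\infty\bigr)^{m/t_2}$ trivializes to $1$. For the remaining factor $\bigl((\w q_j y_j^{-1}q)_\infty/(y_j^{-1}q)_\infty\bigr)^{-(t_1+t_2)/t_1}$, by the $q$-binomial theorem it is a formal power series in $y_j^{-1}$ with constant term $1$. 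I will argue that this factor contributes only its constant term under the extraction $\Coe_{y_j^0:\,j\in S}$: the remaining ratio-type factors in the integrand depend on the $y_j$'s only through ratios $y_a y_b^{-1}$, so the sum of their net $y_j$-powers across $j\in S$ is automatically zero (a conservation law). Introducing the non-negative $y_j^{-1}$-exponents $a_j$ from the extra factor and imposing $y_j^0$-extraction for each $j$ forces $\sum_{j\in S} a_j = 0$ and hence $a_j=0$ for all $j$, isolating the constant term $1$. The resulting expression then matches (iii) exactly.

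For (i) = (ii), observe first that at $m=0$ the Ext vertex operator simplifies to $\Wb(\fL_0,z) = \Gamma_-(z)^{t_1+t_2}$, since $\Gamma_+(z)^{m/(t_1t_2)}=\mathrm{Id}$. In the Heisenberg basis $\{\fa_{-\lambda}\vac\}$, the operator $\Gamma_-(1)^{t_1+t_2}$ is unipotent and ``upper-triangular'' in the partition size, with diagonal entries all equal to $1$ within each fixed $|\cdot|=n$ subspace. Consequently, for any operator $B$ preserving $\mathfrak n$, one has $\Tr\,q^{\mathfrak n}\,\Gamma_-(1)^{t_1+t_2}\,B = \Tr\,q^{\mathfrak n}\,B$, so the $\Wb$ factor drops from both (i) and (ii). For $k_j\in\{0,1,2\}$, Theorem~\ref{TT10210727X} identifies the two integrands operator-by-operator, yielding (i)=(ii). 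For general $k_j\ge 0$, I will instead compute (i) via Lemma~\ref{BB11241134Z}, using the identity $a_{\lambda,\lambda}|_{m=0}=1$ (immediate from \eqref{Car1(23).1}), to obtain $\mathrm{(i)} = (q;q)_\infty \sum_\lambda q^{|\lambda|} \prod_j c_{\lambda,k_j}(t_1,t_2)$; matching this with (iii) via the already-established (ii)=(iii) then gives (i)=(ii).

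The main obstacle is precisely this final matching step for general $k_j\ge 3$: since $\overline{\fB}_{k_j}^{(\alpha)}$ is not diagonal on the Macdonald basis $\{J_\lambda^{(\alpha)}\}$ beyond $k\le 2$, the trace of the product $\prod_j(t_2^{k_j}t_1^\delta\overline{\fB}_{k_j}^{(\alpha)})$ is not simply a product of eigenvalues over $\lambda$. I expect to resolve this by expanding the explicit $q$-series in (iii) using the vertex-operator trace formula \eqref{Lemma3InCar2}, applying Macdonald's eigenvalue identity (Lemma~\ref{TT10180917Z}) factor by factor, and identifying the cancellations that reduce (iii) to the sum $(q;q)_\infty \sum_\lambda q^{|\lambda|} \prod_j c_{\lambda,k_j}(t_1,t_2)$.
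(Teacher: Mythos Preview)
Your argument for (ii) $=$ (iii) is correct and matches the paper's, with the conservation-law observation making explicit why only the constant term of $\bigl((\w q_j y_j^{-1}q)_\infty/(y_j^{-1}q)_\infty\bigr)^{-(t_1+t_2)/t_1}$ survives. For (i) $=$ (ii) with $k_j\in\{0,1,2\}$, invoking Theorem~\ref{TT10210727X} works cleanly.

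The gap is in your handling of general $k_j\ge 3$. You correctly identify that $\overline{\fB}_{k_j}^{(\alpha)}$ is not diagonal on the Jack basis, but your proposed resolution (``identifying cancellations'' that reduce (iii) to the eigenvalue sum) is vague and misses the key mechanism. The paper bypasses the obstacle as follows: first lift from the components $\overline{\fB}_{k_j}^{(\alpha)}$ to the \emph{full} generating operators $\overline{\fB}(\w q_j,\w t_j^{-1})$, which \emph{are} diagonal on the Macdonald basis by Lemma~\ref{TT10180917Z}; since the eigenvalues match those of $\fG_{k_j}$ after coefficient extraction, one obtains
\[
\Tr\,q^{\mathfrak n}\prod_j \fG_{k_j}(t_1,t_2)
= \Coe_{w_1^{k_1}\cdots w_N^{k_N}}\,\Tr\,q^{\mathfrak n}\prod_j \overline{\fB}(\w q_j,\w t_j^{-1}).
\]
Second, expand $\overline{\fB}$ via \eqref{TT10180917Z.0} and \eqref{CW1} and apply a rescaling trick: because the trace formula \eqref{Lemma3InCar2} depends only on products $a_ib_\ell$ of the exponents on $\Gamma_\pm$, one can replace each $\Gamma_-(\,\cdot\,)^{\pm 1}$ by $\Gamma_-(\,\cdot\,)^{\pm t_1}$ and each $\Gamma_+(\,\cdot\,)^{\pm 1}$ by $\Gamma_+(\,\cdot\,)^{\pm t_1^{-1}}$ without changing the trace. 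After the substitution $w_j=z_j/t_2$, this lands exactly on the expression \eqref{BB07200802X} at $m=0$ (with $\Gamma_-(1)^{t_1+t_2}$ removed by conformal weight), which is (ii). Your plan would work if you supply these two steps in place of the unspecified ``cancellations''.
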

\begin{proof}
The equality (ii) = (iii) follows from \eqref{BB11250927Z.1} by setting $m=0$ 
and by noticing that the only contribution of the factor 
$$
\prod_{j \in S} 
\left ( \frac{(\w q_j y_j^{-1}q)_\infty}
  {(y_j^{-1}q)_\infty} \right )^{-(t_1+t_2)/t_1}
$$
to the operation $\Coe_{\prod_{j \in S} y_j^0}$ in  
$[\ch_{k_1} \cdots \ch_{k_N}]_{0, t_1, t_2}'$ is the constant term $1$.

Next, we prove (i) = (ii). 
Since the conformal weight of $\fG_{k_j}(t_1, t_2)$ is $0$, 
\begin{eqnarray}   \label{BB09031203X}
  \langle \ch_{k_1} \cdots \ch_{k_N} \rangle_{0, t_1, t_2}
&=&\Tr_{\fock'} q^{\mathfrak n} \, 
       \Gamma_-(1)^{t_1+t_2} \prod_{j=1}^N \fG_{k_j}(t_1, t_2)
       \nonumber  \\
&=&\Tr_{\fock'} q^{\mathfrak n} \prod_{j=1}^N \fG_{k_j}(t_1, t_2)
\end{eqnarray}
by \eqref{trace.0100}.
Put $\w q_j = e^{-t_1w_j}$ and $\w t_j = e^{t_2w_j}$. 
By \eqref{Intro_DefFG.2} and Lemma~\ref{TT10180917Z}, 
\begin{eqnarray}   \label{BB07200820X}
   \langle \ch_{k_1} \cdots \ch_{k_N} \rangle_{0, t_1, t_2}
&=&\Coe_{w_1^{k_1} \cdots w_N^{k_N}} 
     \sum_{n \ge 0} q^n \sum_{\la \vdash n} \prod_{j=1}^N 
     \sum_{\square\in D_\lambda} 
     \w q_j^{a'(\square)} (\w t_j^{-1})^{\ell'(\square)}  \qquad \\
&=&\Coe_{w_1^{k_1} \cdots w_N^{k_N}} \Tr \, q^{\mathfrak n} 
      \prod_{j=1}^N \overline{\fB}(\w q_j, \w t_j^{-1})   \nonumber  
\end{eqnarray}
where the last trace 
is taken over $\Lambda_\C \otimes_\C \C(t_1, t_2) \cong \fock'$.
By \eqref{TT10180917Z.0} and \eqref{CW1}, 
$\langle \ch_{k_1} \cdots \ch_{k_N} \rangle_{0, t_1, t_2}$ is equal to
\begin{eqnarray*}  
& &\Coe_{w_1^{k_1}y_1^0 \cdots w_N^{k_N}y_N^0} 
      \prod_{j=1}^N \frac{1}{(1-\w q_j)(1-\w t_j^{-1})} \Tr \, q^{\mathfrak n} 
      \prod_{j=1}^N 
      \big (1 - V(y_j; \w t_j, \w t_j^{-1}, 1, \w q_j \w t_j^{-1})\big )  \\
&=&\Coe_{w_1^{k_1}y_1^0 \cdots w_N^{k_N}y_N^0} 
      \prod_{j=1}^N \frac{1}{(1-\w q_j)(1-\w t_j^{-1})} \\
& &\Tr \, q^{\mathfrak n} 
      \prod_{j=1}^N \big (1 - \Gamma_-(\w t_j y_j) \Gamma_-(y_j)^{-1} 
      \Gamma_+(\w t_j \w q_j^{-1} y_j) 
      \Gamma_+(\w t_j y_j)^{-1}\big ).
\end{eqnarray*}
By Lemma~\ref{GammaPMComm}~(ii) and \eqref{Lemma3InCar2}, 
$\langle \ch_{k_1} \cdots \ch_{k_N} \rangle_{0, t_1, t_2}$ is equal to
$$
\Coe_{w_1^{k_1}y_1^0 \cdots w_N^{k_N}y_N^0} 
   \prod_{j=1}^N \frac{1}{(1-\w q_j)(1-\w t_j^{-1})} 
$$
$$
\Tr \, q^{\mathfrak n} 
   \prod_{j=1}^N \big (1 - \Gamma_-(\w t_j y_j)^{t_1} \Gamma_-(y_j)^{-t_1} 
   \Gamma_+(\w t_j \w q_j^{-1} y_j)^{t_1^{-1}} 
   \Gamma_+(\w t_j y_j)^{-t_1^{-1}}\big ).    
$$
Put $w_j = z_j/t_2$ so that $\w q_j = e^{\alpha z_j}$ and 
$\w t_j = e^{z_j}$. Then, 
\begin{eqnarray}    \label{BB07200817X}
& &\langle \ch_{k_1} \cdots \ch_{k_N} \rangle_{0, t_1, t_2} \nonumber  \\
&=&t_2^{\sum_{j=1}^N k_j} \cdot 
   \Coe_{z_1^{k_1} y_1^0 \cdots z_N^{k_N} y_N^0}
   \prod_{j=1}^N \frac{1}{(1-\w q_j)(1-\w t_j^{-1})}   \nonumber    \\
& &\Tr \, q^{\mathfrak n} 
   \prod_{j=1}^N \big (1 - \Gamma_-(\w t_j y_j)^{t_1} \Gamma_-(y_j)^{-t_1} 
   \Gamma_+(\w t_j \w q_j^{-1} y_j)^{t_1^{-1}} 
   \Gamma_+(\w t_j y_j)^{-t_1^{-1}}\big ).
\end{eqnarray}
Since the conformal weight of $\overline{\fB}_k^{(\alpha)}$ is $0$,
we see from \eqref{BB07200802X} that 
$[\ch_{k_1} \cdots \ch_{k_N}]_{0, t_1, t_2}$ is also given by 
\eqref{BB07200817X}. 
So $\langle \ch_{k_1} \cdots \ch_{k_N} \rangle_{0, t_1, t_2} 
= [\ch_{k_1} \cdots \ch_{k_N}]_{0, t_1, t_2}$. 
By \eqref{ExampleF01.1}, we have $\langle \rangle_{0, t_1, t_2} 
= [ \, ]_{0, t_1, t_2} = (q;q)_\infty^{-1}.$ Hence (i) = (ii).
\end{proof}

The following (= Theorem~\ref{Intro_BB070340X}) is the main result 
in this section. 

\begin{theorem}     \label{BB070340X}
\begin{enumerate}
\item[{\rm (i)}]
Let $k_1, \ldots, k_N$ be nonnegative integers. Then, 
$$
\langle \ch_{k_1} \cdots \ch_{k_N} \rangle_{0, t_1, t_2}' 
\in \qBD[t_1, t_2]
$$
is a degree-$\sum_{i=1}^N k_i$ symmetric homogeneous polynomial in 
$t_1$ and $t_2$ whose coefficients have weights at most 
$\sum_{i=1}^N (k_i+2)$.

\item[{\rm (ii)}]
Let $k \ge 0$. Then, Okounkov's Conjecture \cite[Conjecture~2]{Oko} 
holds for $\langle \ch_{k} \rangle_{0, t_1, t_2}'$.
More precisely, $\langle \ch_{k} \rangle_{0, t_1, t_2}' 
\in \qMZV[t_1, t_2]$ is a degree-$k$ symmetric homogeneous polynomial in 
$t_1$ and $t_2$ whose coefficients have weights at most 
$(k+2)$, and the coefficient of $t_1^k$ (and $t_2^k$) in 
$\langle \ch_{k} \rangle_{0, t_1, t_2}'$ is equal to 
$$
(-1)^k \cdot [k+2] + \big (\text{terms with weights $< (k+2)$} \big ).
$$

\item[{\rm (iii)}] 
If $k_1, \ldots, k_N \in \{0, 1, 2\}$, then 
$
\langle \ch_{k_1} \cdots \ch_{k_N} \rangle_{m, t_1, t_2}'
\in \BD(t_1, t_2)[m]
$
with both degree (in $m$) and weight at most $\sum_{i=1}^N (k_i + 2)$.
\end{enumerate}
\end{theorem}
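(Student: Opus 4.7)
The plan is to prove the three parts by reducing them, via Theorem~\ref{TT10210727X} and Lemma~\ref{CaseM=0}, to the analysis of the series $[\ch_{k_1}\cdots\ch_{k_N}]_{m,t_1,t_2}'$ already carried out in Theorem~\ref{BB11250927Z}. Part (iii) is nearly immediate: when every $k_j\in\{0,1,2\}$, Theorem~\ref{TT10210727X} gives the operator identity $\fG_{k_j}(t_1,t_2) = t_2^{k_j}t_1^{\delta(\cdot)}\overline{\fB}_{k_j}^{(\alpha)}|_{\alpha=-t_1/t_2}$, so by Remark~\ref{rmk_trace}(iii) the two traces coincide and $\langle\ch_{k_1}\cdots\ch_{k_N}\rangle_{m,t_1,t_2}' = [\ch_{k_1}\cdots\ch_{k_N}]_{m,t_1,t_2}'$; Theorem~\ref{BB11250927Z} then furnishes the claimed $\BD(t_1,t_2)[m]$ membership with the required degree-in-$m$ and weight bounds.

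For part (i), the starting point is Lemma~\ref{CaseM=0}, which identifies the two reduced series at $m=0$ for \emph{all} $k_j$. Two observations will be combined. First, from Lemma~\ref{BB11241134Z} together with the fact that $a_{\la,\la}|_{m=0} = 1$ (visible directly from \eqref{Car1(23).1}, where numerator and denominator agree at $m=0$), one has
\begin{equation*}
\langle\ch_{k_1}\cdots\ch_{k_N}\rangle_{0,t_1,t_2} = \sum_\la q^{|\la|} \prod_{j=1}^N \sum_{\square\in D_\la}\frac{(-1)^{k_j}}{k_j!}(a'(\square)t_1+\ell'(\square)t_2)^{k_j},
\end{equation*}
which is manifestly a degree-$\sum k_j$ homogeneous polynomial in $t_1,t_2$, symmetric under $t_1\leftrightarrow t_2$ via conjugation $\la\leftrightarrow\la'$; multiplying by $\langle\rangle_{0,t_1,t_2}^{-1} = (q;q)_\infty$ preserves these properties. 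The refinement from $\BD$ (Theorem~\ref{BB11250927Z}) to $\qBD$ comes from setting $m=0$ in \eqref{BB11250927Z.1}: the factor $\prod_{j\in S}\bigl((\w t_j^{-1}y_j)_\infty/(y_j)_\infty\bigr)^{m/t_2}$ drops out, and the surviving $q$-shuffle factors produce only bi-brackets with first index $s_1>1$.

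Part (ii) is treated explicitly at $N=1$, where $|S|\le 1$ makes the inner $y$-product empty, and Lemma~\ref{CaseM=0}(iii) collapses, after the substitution $z = t_2 w$, to
\begin{equation*}
\langle \ch_k \rangle_{0,t_1,t_2}' = \Coe_{w^k}\,\frac{1}{(1-e^{-t_1 w})(1-e^{-t_2 w})}\bigl(1 - R\bigr),\qquad R = \frac{(q)_\infty (e^{-(t_1+t_2)w}q)_\infty}{(e^{-t_1 w}q)_\infty(e^{-t_2 w}q)_\infty}.
\end{equation*}
A direct computation yields $\log R = -\sum_{m,n\ge 1}m^{-1}(1-e^{-t_1 mw})(1-e^{-t_2 mw})q^{mn}$, so the leading $(-\log R)$ contribution to the series equals
\begin{equation*}
\frac{(-1)^k}{k!}\sum_{m,n\ge 1}\frac{q^{mn}}{m}\sum_{i,j=0}^{m-1}(it_1+jt_2)^k,
\end{equation*}
manifestly symmetric, homogeneous of degree $k$, and in $\qMZV[t_1,t_2]$. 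Extracting the coefficient of $t_1^k$ leaves $\frac{(-1)^k}{k!}\sum_{m,n\ge 1}q^{mn}\sum_{i=0}^{m-1}i^k$; Faulhaber's identity $\sum_{i=0}^{m-1}i^k = m^{k+1}/(k+1) + (\text{lower powers of }m)$ together with the identity $[k+2] = \frac{1}{(k+1)!}\sum_{m,n\ge 1}m^{k+1}q^{mn}$ then produces the top-weight term $(-1)^k[k+2]$, while the Faulhaber lower powers and the subleading pieces $(\log R)^j/j!$ for $j\ge 2$ contribute strictly smaller weights.

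The hard part will be confirming that the higher-order pieces from $(\log R)^j/j!$ stay within $\qMZV[t_1,t_2]$ (and not merely within the larger $\qMD$) in part (ii), together with the analogous refinement from $\BD$ to $\qBD$ in part (i); both amount to checking that every bi-bracket produced by the $m=0$ specialization of \eqref{BB11250927Z.1} has $s_1 > 1$, which hinges on the precise shape of the $q$-shuffle factors in Proposition~\ref{BB06111137Z} and on a refinement of the bracket estimates underlying Theorem~\ref{BB11250927Z}.
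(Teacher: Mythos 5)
Your overall architecture matches the paper's: part (iii) via Remark~\ref{rmk_trace}~(iii) and Theorem~\ref{BB11250927Z}, and parts (i)--(ii) via Lemma~\ref{CaseM=0} plus the $a_{\la,\la}|_{m=0}=1$ observation from Lemma~\ref{BB11241134Z} for homogeneity and symmetry. Part (iii) is complete as written. For part (ii) your route is genuinely different in one respect: instead of quoting the expansion of $1-\frac{(q)_\infty(\w q\w t^{-1}q)_\infty}{(\w q q)_\infty(\w t^{-1}q)_\infty}$ from \cite[Remark~3.2]{Qin3} as the paper does, you derive it from scratch via $\log R=-\sum_{m,n\ge1}m^{-1}(1-e^{-t_1mw})(1-e^{-t_2mw})q^{mn}$ and Faulhaber; this is more self-contained and your leading-term identification $(-1)^k[k+2]$ is correct. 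The issue you flag as ``hard'' for (ii) --- that the $(\log R)^j/j!$ pieces stay in $\qMZV[t_1,t_2]$ --- actually closes easily: each $\Coe_{w^c}\log R$ with $c\ge 2$ is a $\Q[t_1,t_2]$-combination of $(c-1)!\,[c]$, the single brackets $[c]=Z_{Q^E}(c)$ with $c\ge2$ lie in $\qMZV=Z(\{Q^E_s\}_{s\in\N_{>1}},\N_{>1})$, and $\qMZV$ is a $\Q$-algebra by \eqref{BK3-2.4}, with weights subadditive under products; the divisibility of each $\log R$ factor by $t_1t_2$ kills their contribution to the $t_1^k$ coefficient. So (ii) is essentially complete modulo that short argument.

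The genuine gap is in part (i). Your entire justification for $\qBD$ membership and for the weight bound $\sum_{i}(k_i+2)$ in the multi-variable case is the sentence that the $m=0$ specialization of \eqref{BB11250927Z.1} ``produces only bi-brackets with first index $s_1>1$,'' which you yourself then defer as ``the hard part.'' That claim is exactly the analytic content of the theorem and is not routine for $N\ge2$: the expression in Lemma~\ref{CaseM=0}~(iii) contains the cross-factors $\prod_{\ell<j}\frac{(\w q_\ell\w t_j^{-1}y_jy_\ell^{-1})_\infty(y_jy_\ell^{-1})_\infty}{(\w q_\ell y_jy_\ell^{-1})_\infty(\w t_j^{-1}y_jy_\ell^{-1})_\infty}\cdots$, and extracting $\Coe_{\prod_{j\in S}y_j^0}$ of such products before taking $\Coe_{z_1^{k_1}\cdots z_N^{k_N}}$ is where both the bi-bracket structure and the weight count arise; nothing in your argument controls which bi-brackets appear or bounds their weight. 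The paper closes this step by invoking \cite[Theorem~1.1 and Theorem~1.3~(ii)]{Qin3} (together with the pole-order estimate \eqref{BB11250927Z.2}), i.e., it outsources precisely the statement you left unproved. Without either that citation or an actual proof of the coefficient-extraction lemma, your part (i) establishes only homogeneity, symmetry, and membership in $\Q[[q]][t_1,t_2]$ --- not the $\qBD$ membership or the weight bound that constitute the theorem's substance.
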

\begin{proof}
(i) Let $\alpha = -t_1/t_2$. Let $\w q_j = e^{\alpha z_j}$ and 
$\w t_j = e^{z_j}$ for every $1 \le j \le N$. By Lemma~\ref{CaseM=0}, 
$\langle \ch_{k_1} \cdots \ch_{k_N} \rangle_{0, t_1, t_2}'
= [\ch_{k_1} \cdots \ch_{k_N}]_{0, t_1, t_2}$ is equal to
$$
t_2^{\sum_{j=1}^N k_j} \cdot \sum_{S \subset {\underline N}} (-1)^{|S|} 
\Coe_{z_1^{k_1} \cdots z_N^{k_N}} 
\prod_{j=1}^N \frac{1}{(1-\w q_j)(1-\w t_j^{-1})} \cdot
\prod_{j \in S} \frac{(q)_\infty (\w q_j \w t_j^{-1}q)_\infty}
{(\w q_j q)_\infty (\w t_j^{-1}q)_\infty}
$$
\begin{eqnarray}    \label{BB070340X.1}
\cdot \Coe_{\prod_{j \in S} y_j^0}
\prod_{\substack{\ell, j \in S\\\ell < j}} 
\frac{(\w q_\ell \w t_j^{-1} y_j y_\ell^{-1})_\infty 
(y_jy_\ell^{-1})_\infty}{(\w q_\ell y_j y_\ell^{-1})_\infty
(\w t_j^{-1} y_j y_\ell^{-1})_\infty}
\frac{(\w q_j \w t_\ell^{-1} y_\ell y_j^{-1}q)_\infty 
(y_\ell y_j^{-1}q)_\infty}{(\w q_j y_\ell y_j^{-1}q)_\infty 
(\w t_\ell^{-1} y_\ell y_j^{-1}q)_\infty}.
\end{eqnarray}
By \eqref{BB11250927Z.2} and 
\cite[Theorem~1.1 and Theorem~1.3 (ii)]{Qin3}, we obtain
$$
\langle \ch_{k_1} \cdots \ch_{k_N} \rangle_{0, t_1, t_2}' 
\in \qBD(t_1, t_2)
$$ 
whose weight is at most $\sum_{i=1}^N (k_i+2)$. 
By \eqref{Car1(23).1}, since $m=0$, we get $a_{\la, \la} = 1$ for every partition $\la$. By Lemma~\ref{BB11241134Z}, 
$\langle \ch_{k_1} \cdots \ch_{k_N} \rangle_{0, t_1, t_2}'$ 
is a degree-$\sum_{i=1}^N k_i$ homogeneous polynomial in $t_1$ and $t_2$ 
with coefficients in $\Q[[q]]$. So 
$$
\langle \ch_{k_1} \cdots \ch_{k_N} \rangle_{0, t_1, t_2}' 
\in \qBD[t_1, t_2].
$$ 
Since $t_1$ and $t_2$ are symmetric, 
$\langle \ch_{k_1} \cdots \ch_{k_N} \rangle_{0, t_1, t_2}'$ 
is symmetric in $t_1$ and $t_2$.

(ii) We see from \eqref{BB070340X.1} that 
$\langle \ch_{k} \rangle_{0, t_1, t_2}'$ is equal to
$$
t_2^k \cdot \Coe_{z^{k}} \frac{1}{(1-\w q)(1-\w t^{-1})} \left (1
   - \frac{(q)_\infty (\w q \w t^{-1}q)_\infty}
   {(\w q q)_\infty (\w t^{-1}q)_\infty} \right )
$$
where $\w q = e^{\alpha z}$ and $\w t = e^{z}$. Note that
$$
  \frac{1}{1-e^x}
= -\frac1x \left (1 - \frac12 x + \frac1{12}x^2 
  - \frac1{720}x^4+ O(x^5) \right )
\in \Q[[x]].
$$
Also, in view of \cite[Remark~3.2]{Qin3}, we conclude that 
\begin{eqnarray*}
& &1 - \frac{(q)_\infty (\w q \w t^{-1}q)_\infty}
   {(\w q q)_\infty (\w t^{-1}q)_\infty}  \\
&=&-[2] \alpha z^2 - [3] (\alpha - 1)\alpha z^3 
   - [4] \alpha^3 z^4 - \left (\frac12 [2]^2 - \frac32 [4] \right ) 
   \alpha^2 z^4 - [4] \alpha z^4 + O(z^5)  \\
&=&-\alpha z^2 \cdot \sum_{m=0}^\infty h_m(\alpha) z^m
\end{eqnarray*}
where $h_m(\alpha) \in \qMZV[\alpha]$ for each $m \ge 0$, 
$\deg_\alpha h_m(\alpha) = m$, the weight of 
$h_m(\alpha) \in \qMZV[\alpha]$ is $m+2$, and the coefficient of $\alpha^m$ 
in $h_m(\alpha)$ is equal to $[m+2]$. Therefore,
$$
\langle \ch_{k} \rangle_{0, t_1, t_2}' \in \qMZV[t_1, t_2]
$$ 
is a degree-$k$ symmetric homogeneous polynomial in $t_1$ and $t_2$ 
whose coefficients have weights at most $k+2$. Moreover,
the coefficients of $t_1^k$ and $t_2^k$ are of the form 
$$
(-1)^k \cdot [k+2] + \big (\text{terms with weights $< (k+2)$} \big ).
$$

(iii) By \eqref{rmk_trace.0}, 
$\langle \ch_{k_1} \cdots \ch_{k_N} \rangle_{m, t_1, t_2}
= [\ch_{k_1} \cdots \ch_{k_N}]_{m, t_1, t_2}$. By \eqref{ExampleF01.1},
we obtain $\langle \ch_{k_1} \cdots \ch_{k_N} \rangle_{m, t_1, t_2}'
= [\ch_{k_1} \cdots \ch_{k_N}]_{m, t_1, t_2}'$. 
So (iii) follows from Theorem~\ref{BB11250927Z}.
\end{proof}


\section{\bf Equivariant higher order derivatives} 
\label{sect_HigherDeri}

Our goal in this section is to study the higher order derivatives 
$\fa_{n}^{(k)}, \, k \ge 1$ of 
the equivariant Heisenberg operators $\fa_{n}, \, n \ne 0$ 
(see Definition~\ref{Intro_DefEquivaDeri}). The main idea is to utilize 
the formula $\fG_1(t_1, t_2) = t_2^k t_1^{\delta(\cdot)} 
\overline{\fB}_1^{(\alpha)}|_{\alpha = -t_1/t_2}$ from 
Theorem~\ref{Intro_TT10210727X}. This formula interprets 
the equivariant boundary operator 
$\fG_1(t_1, t_2)$ in terms of vertex operators through \eqref{Intro_CW1}, 
\eqref{Intro_TT10180917Z.0} and \eqref{Intro_OverlinefBkAlpha}. 

The first order derivative $\fa_{n}' = \fa_{n}^{(1)}$ is given below.

\begin{lemma}   \label{BB02170918Z}
For nonzero integers $i$ and $j$, define $\epsilon(i, j) = 1$ 
if $i j < 0, i+j > 0$ or if $i, j < 0$, and $\epsilon(i, j) = 0$ otherwise.
Then, for $n \ne 0$, we have 
\begin{eqnarray}   \label{BB02170918Z.0}
\fa_{n}' 
= \frac{n}{2} \sum_{i+j=n} (-t_1t_2)^{\epsilon(i, j)} \fa_{i}\fa_{j}
  + \frac{n(|n|-1)}{2} (t_1+t_2) \fa_{n}.
\end{eqnarray}
\end{lemma}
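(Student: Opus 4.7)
The plan is to compute $\fa_n' = [\fG_1(t_1,t_2), \fa_n]$ directly, using the explicit expression \eqref{Intro_OP3DefMqt1t2} for the equivariant boundary operator together with the Heisenberg commutation relation \eqref{Intro_TT09110234X}, $[\fa_k, \fa_\ell] = k\,\delta_{k,-\ell}\cdot {\rm Id}$. The first step is to decompose $\fG_1(t_1,t_2) = A + B + C$ where $A = \frac{t_1 t_2}{2}\sum_{i,j>0}\fa_{-i}\fa_{-j}\fa_{i+j}$, $B = -\frac{1}{2}\sum_{i,j>0}\fa_{-i-j}\fa_i\fa_j$, and $C = -\frac{t_1+t_2}{2}\sum_{i>0}(i-1)\fa_{-i}\fa_i$, and then evaluate each of $[A,\fa_n],[B,\fa_n],[C,\fa_n]$ case-by-case in the sign of $n$, exploiting $[\fa_k,\fa_n]=0$ whenever $k$ and $n$ have the same sign.

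For $n>0$, only the commutators $[\fa_{-i},\fa_n]=-n\,\delta_{i,n}$ inside $A,B,C$ contribute. A short expansion yields $[A,\fa_n] = -n t_1 t_2\sum_{j>0}\fa_{-j}\fa_{n+j}$, $[B,\fa_n] = \frac{n}{2}\sum_{i,j>0,\,i+j=n}\fa_i\fa_j$, and $[C,\fa_n] = \frac{n(n-1)}{2}(t_1+t_2)\fa_n$. For $n=-m$ with $m>0$, only the commutators in the annihilation slots survive: $[A,\fa_{-m}] = \frac{m t_1 t_2}{2}\sum_{i,j>0,\,i+j=m}\fa_{-i}\fa_{-j}$, $[B,\fa_{-m}] = -m\sum_{j>0}\fa_{-m-j}\fa_j$, and $[C,\fa_{-m}] = -\frac{m(m-1)}{2}(t_1+t_2)\fa_{-m}$, where one uses the commutativity $[\fa_{-m-j},\fa_j]=0$ to move factors freely.

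It remains to reconcile these with the target expression \eqref{BB02170918Z.0}. For $n>0$, the decorated sum $\sum_{i+j=n}(-t_1t_2)^{\epsilon(i,j)}\fa_i\fa_j$ over pairs of nonzero integers decomposes into the diagonal contribution $i,j>0$ with $\epsilon=0$ (matching $[B,\fa_n]$) and two mixed contributions $(i>0,j<0)$ and $(i<0,j>0)$ each with $\epsilon=1$; the two mixed sums coincide because $\fa_{-k}$ commutes with $\fa_{n+k}$ when $n>0$, and together they reproduce $[A,\fa_n]$. The case $n<0$ is symmetric: $\epsilon(i,j)=1$ now selects pairs $i,j<0$ (the $[A,\fa_{-m}]$ piece carrying the $-t_1 t_2$ weight), while the two mixed sums again merge to give $[B,\fa_{-m}]$.

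The main obstacle is not conceptual but a careful combinatorial bookkeeping: verifying that $\epsilon(i,j)$ correctly tracks which index patterns acquire the factor $-t_1 t_2$ in each sign-case of $n$, and systematically using $[\fa_k,\fa_\ell]=0$ for $k+\ell\neq 0$ to collapse the two ``mixed-sign'' subsums into a single operator expression that matches the output of the direct commutator calculation.
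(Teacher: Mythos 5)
Your proposal is correct and follows exactly the route the paper takes: the paper's proof simply states that the lemma follows from \eqref{Intro_OP3DefMqt1t2} and \eqref{Intro_TT09110234X} via straightforward computations, which is precisely the direct commutator calculation you carry out. Your case analysis in the sign of $n$ and the bookkeeping of $\epsilon(i,j)$ both check out.
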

\begin{proof}
Follows from \eqref{Intro_OP3DefMqt1t2} and 
\eqref{Intro_TT09110234X} via straightforward computations.
\end{proof}

To compute $\fa_{-n}^{(k)}$ for $n, k \ge 1$, we begin with two simple lemmas.

\begin{lemma}  \label{CommuVFa}
Let $n > 0$, and $V(z; q, t, \w q, \w t)$ be from \eqref{CW1}. Then, 
$$
  [V(z; q, t, \w q, \w t), \fa_{-n}] 
= (\w t^n - t^n)z^{-n} \cdot V(z; q, t, \w q, \w t).
$$
\end{lemma}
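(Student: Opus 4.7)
The plan is to mirror the proof of Lemma~\ref{GammaPMComm}(i). Write $V(z; q, t, \w q, \w t) = E_-(z) \, E_+(z)$, where
\[
E_-(z) = \exp\!\left(\sum_{k \ge 1} (q^k - \w q^k) \tfrac{z^k}{k}\fa_{-k}\right), \qquad
E_+(z) = \exp\!\left(\sum_{k \ge 1} (\w t^k - t^k) \tfrac{z^{-k}}{k}\fa_k\right).
\]
Since $[\fa_{-k}, \fa_{-n}] = 0$ for all $k, n > 0$ by \eqref{Intro_TT09110234X}, the factor $E_-(z)$ commutes with $\fa_{-n}$. Therefore
\[
[V(z; q, t, \w q, \w t), \fa_{-n}] = E_-(z) \cdot [E_+(z), \fa_{-n}].
\]

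Next I would split off the $k = n$ contribution inside $E_+(z)$. All terms with $k \ne n$ commute with $\fa_{-n}$ (again by \eqref{Intro_TT09110234X}), so
\[
[E_+(z), \fa_{-n}] = \exp\!\left(\sum_{k \ge 1,\, k \ne n} (\w t^k - t^k) \tfrac{z^{-k}}{k}\fa_k\right) \cdot \left[\exp\!\left((\w t^n - t^n)\tfrac{z^{-n}}{n}\fa_n\right), \fa_{-n}\right].
\]
For the remaining single-mode commutator, the identity $[\fa_n, \fa_{-n}] = n$ gives $[\fa_n^m, \fa_{-n}] = mn\,\fa_n^{m-1}$, hence $[\exp(c\,\fa_n), \fa_{-n}] = cn \exp(c\,\fa_n)$ for any scalar $c$. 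Applying this with $c = (\w t^n - t^n) z^{-n}/n$ yields
\[
[E_+(z), \fa_{-n}] = (\w t^n - t^n) z^{-n} \cdot E_+(z),
\]
and combining with the factor $E_-(z)$ on the left produces the claimed identity. Every step is a routine manipulation of the Heisenberg relation \eqref{Intro_TT09110234X} and there is no genuine obstacle; the only care needed is to keep the creation and annihilation parts separated so that the single active mode $k = n$ is isolated cleanly.
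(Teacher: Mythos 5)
Your proof is correct and follows essentially the same route as the paper: both reduce the claim, via the Leibniz rule, to the single-mode commutator $[\exp(c\,\fa_n),\fa_{-n}]=cn\exp(c\,\fa_n)$. The only cosmetic difference is that the paper factors the annihilation part as $\Gamma_+(\w t^{-1}z)\,\Gamma_+(t^{-1}z)^{-1}$ and invokes \eqref{GammaPMComm.01} twice, whereas you keep it as the single exponential $E_+(z)$ and isolate the mode $k=n$ directly.
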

\begin{proof}
By \eqref{CW1}, $V(z; q, t, \w q, \w t) = \Gamma_-(qz) \Gamma_-(\w qz)^{-1} 
\Gamma_+(\w t^{-1}z) \Gamma_+(t^{-1}z)^{-1}$. So 
\begin{eqnarray*}    
   [V(z; q, t, \w q, \w t), \fa_{-n}]
&=&\Gamma_-(qz) \Gamma_-(\w qz)^{-1} 
   [\Gamma_+(\w t^{-1}z), \fa_{-n}] \Gamma_+(t^{-1}z)^{-1}     \\
& &+\,\, \Gamma_-(qz) \Gamma_-(\w qz)^{-1} 
   \Gamma_+(\w t^{-1}z) [\Gamma_+(t^{-1}z)^{-1}, \fa_{-n}]  \\
&=&(\w t^n - t^n)z^{-n} \cdot V(z; q, t, \w q, \w t)
\end{eqnarray*}
where we have used \eqref{GammaPMComm.01} twice in the last step.
\end{proof}

\begin{lemma}  \label{CommuBwtV}
Let $k \ge 1$. For $1 \le i < j \le k$, define
$$
  f_{i, j} 
= \frac{1-(t_j^{-1}z_j)^{-1}(q_iz_i)}{1-(\w t_j^{-1}z_j)^{-1}(q_iz_i)} \cdot
  \frac{1-(\w t_j^{-1}z_j)^{-1}(\w q_iz_i)}{1-(t_j^{-1}z_j)^{-1}(\w q_iz_i)}.
$$
Then, 
$[V(z_k; q_k, t_k, \w q_k, \w t_k), [\ldots,
[V(z_2; q_2, t_2, \w q_2, \w t_2), V(z_1; q_1, t_1, \w q_1, \w t_1)] \ldots ]$
is equal to
$$
\prod_{j=2}^k \left (\prod_{i=1}^{j-1}f_{i,j} - \prod_{i=1}^{j-1}f_{j, i} \right )   
\cdot \prod_{i=1}^k \Gamma_-(q_iz_i) \Gamma_-(\w q_iz_i)^{-1}
\cdot \prod_{i=1}^k \Gamma_+(\w t_i^{-1}z_i) \Gamma_+(t_i^{-1}z_i)^{-1}.
$$
\end{lemma}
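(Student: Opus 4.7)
The strategy is to split each vertex operator into creation and annihilation parts and to normal-order everything, keeping careful track of the scalar arising from each move of a $\Gamma_+$ past a $\Gamma_-$. Writing
$$
\Gamma_-^{(i)} = \Gamma_-(q_i z_i) \Gamma_-(\w q_i z_i)^{-1},
\qquad
\Gamma_+^{(i)} = \Gamma_+(\w t_i^{-1} z_i) \Gamma_+(t_i^{-1} z_i)^{-1},
$$
formula \eqref{CW1} reads $V(z_i; q_i, t_i, \w q_i, \w t_i) = \Gamma_-^{(i)} \Gamma_+^{(i)}$. Because $\Gamma_-(a)$ commutes with $\Gamma_-(b)$ (for any parameters $a,b$), and similarly for the $\Gamma_+$'s, the operators $\Gamma_-^{(1)}, \ldots, \Gamma_-^{(k)}$ mutually commute and so do $\Gamma_+^{(1)}, \ldots, \Gamma_+^{(k)}$. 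This is what will ultimately allow any normal-ordered product to be written as $\prod_i \Gamma_-^{(i)} \cdot \prod_i \Gamma_+^{(i)}$.

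The key single-step computation is: applying Lemma~\ref{GammaPMComm}(ii) four times to move the two factors of $\Gamma_+^{(j)}$ past the two factors of $\Gamma_-^{(i)}$ (with $i \ne j$) produces a product of four $(1 - \cdot)^{\pm 1}$ terms, where the sign of each exponent is determined by the product $-ab$ of the relevant $\Gamma_\pm$ exponents $\pm 1$ coming from $\Gamma_\pm^{(\cdot)}$. A direct check of the four sign combinations recovers exactly
$$
\Gamma_+^{(j)} \Gamma_-^{(i)} = f_{i,j} \cdot \Gamma_-^{(i)} \Gamma_+^{(j)}.
$$
Consequently $V_j V_i = f_{i,j} \cdot \Gamma_-^{(i)} \Gamma_-^{(j)} \Gamma_+^{(i)} \Gamma_+^{(j)}$ and, swapping indices, $V_i V_j = f_{j,i} \cdot \Gamma_-^{(i)} \Gamma_-^{(j)} \Gamma_+^{(i)} \Gamma_+^{(j)}$, so $[V_j, V_i] = (f_{i,j} - f_{j,i}) \cdot \Gamma_-^{(i)} \Gamma_-^{(j)} \Gamma_+^{(i)} \Gamma_+^{(j)}$. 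This settles the base case $k=2$.

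The remainder of the proof is an induction on $k$. Let
$
N_{1, \ldots, k-1} = \prod_{i=1}^{k-1} \Gamma_-^{(i)} \cdot \prod_{i=1}^{k-1} \Gamma_+^{(i)}
$
denote the normal-ordered product. By the inductive hypothesis the $(k{-}1)$-fold nested commutator equals $\prod_{j=2}^{k-1} \bigl(\prod_{i<j} f_{i,j} - \prod_{i<j} f_{j,i}\bigr) \cdot N_{1, \ldots, k-1}$; the scalar prefactor commutes with $V_k$, so it suffices to evaluate $[V_k, N_{1, \ldots, k-1}]$. Moving $\Gamma_+^{(k)}$ to the right past each $\Gamma_-^{(i)}$, $i < k$, contributes the scalar $\prod_{i=1}^{k-1} f_{i,k}$ and leaves $N_{1, \ldots, k}$; moving $\Gamma_+^{(1)}, \ldots, \Gamma_+^{(k-1)}$ past $\Gamma_-^{(k)}$ in $N_{1, \ldots, k-1} V_k$ contributes $\prod_{i=1}^{k-1} f_{k,i}$ and leaves the same $N_{1, \ldots, k}$. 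Subtracting gives the new factor $\prod_{i<k} f_{i,k} - \prod_{i<k} f_{k,i}$, closing the induction.

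\textbf{Main obstacle.} There is no serious obstacle; the only delicate part is the sign bookkeeping in the four factors defining $f_{i,j}$. Each $\Gamma_\pm$ in $\Gamma_\pm^{(\cdot)}$ carries exponent $+1$ (from $q_i z_i$ and $\w t_i^{-1} z_i$) or $-1$ (from $\w q_i z_i$ and $t_i^{-1} z_i$), and Lemma~\ref{GammaPMComm}(ii) contributes $(1 - z^{-1} y)^{-ab}$, so one must verify that the four resulting exponents match precisely the pattern in the definition of $f_{i,j}$. Once this one computation is checked, the rest of the argument is a clean induction.
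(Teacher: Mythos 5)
Your proposal is correct and follows essentially the same route as the paper: both decompose each $V(z_i;\ldots)$ via \eqref{CW1} into $\Gamma_-^{(i)}\Gamma_+^{(i)}$, use Lemma~\ref{GammaPMComm}~(ii) four times to get the normal-ordering scalar $f_{i,j}$, verify the case $k=2$ explicitly, and finish by induction on $k$. If anything, you spell out the inductive step (which the paper leaves to the reader) in more detail, and your sign bookkeeping for the four exponents matches the paper's definition of $f_{i,j}$.
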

\begin{proof}
First of all, the lemma is trivially true when $k = 1$.
Next, by \eqref{CW1} and Lemma~\ref{GammaPMComm}~(ii), 
$[V(z_2; q_2, t_2, \w q_2, \w t_2), V(z_1; q_1, t_1, \w q_1, \w t_1)]$ is equal to
\begin{eqnarray*}    
& &\Gamma_-(q_2z_2) \Gamma_-(\w q_2z_2)^{-1} 
   \Gamma_+(\w t_2^{-1}z_2) \Gamma_+(t_2^{-1}z_2)^{-1}   \\
& &\cdot \Gamma_-(q_1z_1) \Gamma_-(\w q_1z_1)^{-1}    
   \Gamma_+(\w t_1^{-1}z_1) \Gamma_+(t_1^{-1}z_1)^{-1} \\
& &-\, \Gamma_-(q_1z_1) \Gamma_-(\w q_1z_1)^{-1}    
   \Gamma_+(\w t_1^{-1}z_1) \Gamma_+(t_1^{-1}z_1)^{-1} \\
& &\cdot \Gamma_-(q_2z_2) \Gamma_-(\w q_2z_2)^{-1} 
   \Gamma_+(\w t_2^{-1}z_2) \Gamma_+(t_2^{-1}z_2)^{-1} \\
&=&(f_{1,2}-f_{2,1})   
   \cdot \prod_{i=1}^2 \Gamma_-(q_iz_i) \Gamma_-(\w q_iz_i)^{-1}
   \cdot \prod_{i=1}^2 \Gamma_+(\w t_i^{-1}z_i) \Gamma_+(t_i^{-1}z_i)^{-1}.
\end{eqnarray*}
So the lemma holds for $k = 2$. Using induction on $k$ yields the lemma.
\end{proof}

The following is the main result in this section.

\begin{theorem}  \label{EquivHigherDeriv}
Let $n > 0$ and $k \ge 1$. Let $t_1^{\delta(\cdot)}$ be the operator 
defined by putting
$t_1^{\delta(\cdot)} \fa_\la = t_1^{\ell(\la^-) - \ell(\la^+)} \fa_\la$
for any generalized partition $\la$. Then, $\fa_{-n}^{(k)}$ is equal to
$$
(-t_2)^k t_1^{\delta(\cdot)-1} \Coe_{s_1 \cdots s_k z_1^0 \cdots z_k^0} 
   \frac{(Q_1^n - 1)T_1^{-n}z_1^{-n}}{\prod_{i=1}^k (1-Q_i)(1-T_i^{-1})}
   \prod_{j=2}^k \left (\prod_{i=1}^{j-1}f_{i,j} 
   - \prod_{i=1}^{j-1}f_{j, i} \right )   
$$
$$
\cdot \prod_{i=1}^k \Gamma_-(T_iz_i) \Gamma_-(z_i)^{-1}
\cdot \prod_{i=1}^k \Gamma_+(Q_i^{-1}T_iz_i) \Gamma_+(T_iz_i)^{-1}.
$$
where $Q_i = e^{\alpha s_i}$ with $\alpha = -t_1/t_2$, $T_i = e^{s_i}$, and
\begin{eqnarray}  \label{EquivHigherDeriv.0}
  f_{i, j} 
= \frac{1-(T_jz_j)^{-1}(T_iz_i)}{1-(Q_j^{-1}T_jz_j)^{-1}(T_iz_i)} \cdot
  \frac{1-(Q_j^{-1}T_jz_j)^{-1}z_i}{1-(T_jz_j)^{-1}z_i}.
\end{eqnarray}
\end{theorem}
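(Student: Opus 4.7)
The plan is to lift $\fa_{-n}^{(k)}=\mathrm{ad}(\fG_1)^k(\fa_{-n})$ to an iterated commutator of vertex operators, then evaluate it using Lemmas~\ref{CommuVFa} and~\ref{CommuBwtV}. Combining \eqref{Intro_TT10210727X.0}, \eqref{Intro_TT10180917Z.0}, and \eqref{Intro_OverlinefBkAlpha} gives
\[
\fG_1(t_1,t_2)=t_2\,t_1^{\delta(\cdot)}\,\Coe_{s\,z^0}\frac{1-V(z;T,T^{-1},1,QT^{-1})}{(1-Q)(1-T^{-1})},\qquad Q=e^{\alpha s},\ T=e^s,\ \alpha=-t_1/t_2.
\]
For the $k$-fold bracket I would assign independent formal variables $(s_i,z_i)$ to the $i$-th occurrence of $\fG_1$, producing vertex operators $V_i:=V(z_i;T_i,T_i^{-1},1,Q_iT_i^{-1})$. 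All scalar prefactors, $\Coe_{s_iz_i^0}$ operations, and the additive constant $1$ inside each $1-V_i$ (which is central) pass through commutators, so only the nested bracket of the $V_i$'s needs to be computed.

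To manage the grading operator I would interpret $t_1^{\delta(\cdot)}$ as conjugation by an operator $t_1^{\hat N}$ acting on states via $t_1^{\hat N}|\mu\rangle=t_1^{\ell(\mu)}|\mu\rangle$; one checks directly that $t_1^{\hat N}\fa_\la t_1^{-\hat N}=t_1^{\delta(\la)}\fa_\la$ for every generalized partition $\la$. Writing $\fG_1=\mathrm{Ad}(t_1^{\hat N})(t_2\,\overline{\fB}_1^{(\alpha)}|_{\alpha=-t_1/t_2})$ and $\fa_{-n}=\mathrm{Ad}(t_1^{\hat N})(t_1^{-1}\fa_{-n})$, iterated application of the automorphism-equivariance of $\mathrm{ad}$ yields
\[
\mathrm{ad}(\fG_1)^k(\fa_{-n})=t_1^{\delta(\cdot)-1}\,(-t_2)^k\,\Coe_{\prod_is_iz_i^0}\frac{[V_k,[\ldots,[V_1,\fa_{-n}]]\ldots]}{\prod_i(1-Q_i)(1-T_i^{-1})}.
\]
A key point is that only one factor $t_1^{-1}$ survives, not $t_1^{-k}$: after the first bracket the result lies in the image of $\mathrm{Ad}(t_1^{\hat N})$, so subsequent iterations produce no new powers of $t_1^{-1}$. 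This already accounts for the prefactor $(-t_2)^k\,t_1^{\delta(\cdot)-1}$ in the theorem.

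For the innermost bracket, Lemma~\ref{CommuVFa} with $(q,t,\w q,\w t)=(T_1,T_1^{-1},1,Q_1T_1^{-1})$ produces $[V_1,\fa_{-n}]=T_1^{-n}(Q_1^n-1)z_1^{-n}\,V_1$; this scalar coefficient is precisely the subscript-$1$ factor appearing in the theorem, and being central it passes unchanged through the remaining brackets, reducing the computation to $[V_k,[\ldots,[V_2,V_1]]\ldots]$. Specializing Lemma~\ref{CommuBwtV} to the same parameters, the functions $f_{i,j}$ collapse to \eqref{EquivHigherDeriv.0} and the $\Gamma_\pm$ product becomes $\prod_i\Gamma_-(T_iz_i)\Gamma_-(z_i)^{-1}\cdot\prod_i\Gamma_+(Q_i^{-1}T_iz_i)\Gamma_+(T_iz_i)^{-1}$, completing the assembly. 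The main obstacle is the careful bookkeeping of $t_1^{\delta(\cdot)}$: one must verify that exactly one power of $t_1^{-1}$ survives after $k$ iterations, and that the double pole in $\frac{1}{(1-Q_i)(1-T_i^{-1})}$ is tamed by the simple zero of $1-V_i$ at $s_i=0$ (equivalently, that $V_i|_{s_i=0}=1$ acts centrally and drops out), so that the $\Coe_{s_i^1}$ operation is unambiguous.
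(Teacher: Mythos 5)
Your proposal is correct and follows essentially the same route as the paper: rewrite $\fG_1(t_1,t_2)$ via \eqref{EquivHigherDeriv.1} as a coefficient of the deformed vertex operator $V(z;T,T^{-1},1,QT^{-1})$, reduce $\mathrm{ad}(\fG_1)^k(\fa_{-n})$ to the nested commutator of the $V_i$'s, and then apply Lemma~\ref{CommuVFa} to the innermost bracket and Lemma~\ref{CommuBwtV} to the rest. Your explicit treatment of $t_1^{\delta(\cdot)}$ as conjugation by a length-grading operator (explaining why exactly one factor of $t_1^{-1}$ survives) is a point the paper passes over silently, and it is a correct and welcome clarification rather than a deviation.
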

\begin{proof}
By Theorem~\ref{TT10210727X} and \eqref{Intro_OverlinefBkAlpha}, we have
$$
  \fG_1(t_1, t_2)
= t_2 t_1^{\delta(\cdot)} \overline{\fB}_1^{(\alpha)}
= t_2 t_1^{\delta(\cdot)} \Coe_{t_0} \overline{\fB}(q, t^{-1})
$$
where $q = e^{\alpha t_0}$ with $\alpha = -t_1/t_2$, and $t = e^{t_0}$. 
Combining with \eqref{TT10180917Z.0}, we obtain
\begin{eqnarray*}   
   \fG_1(t_1, t_2)
&=&t_2 t_1^{\delta(\cdot)} \Coe_{t_0} \frac{1}{(1-q)(1-t^{-1})} 
   \big (1 - V_0(z; t, t^{-1}, 1, qt^{-1})\big )     \\
&=&t_2 t_1^{\delta(\cdot)} \Coe_{t_0z^0} \frac{1}{(1-q)(1-t^{-1})} 
   \big (1 - V(z; t, t^{-1}, 1, qt^{-1})\big ).
\end{eqnarray*}
To avoid confusion with the notations, we write the above as 
\begin{eqnarray}  \label{EquivHigherDeriv.1}
  \fG_1(t_1, t_2)
= t_2 t_1^{\delta(\cdot)} \Coe_{sz^0} \frac{1}{(1-Q)(1-T^{-1})} 
   \big (1 - V(z; T, T^{-1}, 1, QT^{-1})\big )
\end{eqnarray}
by replacing $q, t, t_0$ by $Q, T, s$ respectively 
(so $Q = e^{\alpha s}$ and $T = e^{s}$). By definition,
$$
  \fa_{-n}^{(k)}
= \underbrace{[\fG_1(t_1, t_2), [\ldots, [\fG_1(t_1, t_2)}_{\text{$k$ times}}, 
   \fa_{-n}] \ldots ].
$$
Applying \eqref{EquivHigherDeriv.1}, we conclude that $\fa_{-n}^{(k)}$ is equal to
$$
(-t_2)^k t_1^{\delta(\cdot)-1} \Coe_{s_1 \cdots s_k z_1^0 \cdots z_k^0} 
   \frac{1}{\prod_{i=1}^k (1-Q_i)(1-T_i^{-1})}
$$
\begin{eqnarray*}   
[V(z_k; T_k, T_k^{-1}, 1, Q_kT_k^{-1}), 
[\ldots, [V(z_1; T_1, T_1^{-1}, 1, Q_1T_1^{-1}), \fa_{-n}] \ldots ]    
\end{eqnarray*}
where $Q_i = e^{\alpha s_i}$ and $T_i = e^{s_i}$.
By Lemma~\ref{CommuVFa}, $\fa_{-n}^{(k)}$ is equal to
$$
(-t_2)^k t_1^{\delta(\cdot)-1} \Coe_{s_1 \cdots s_k z_1^0 \cdots z_k^0} 
   \frac{(Q_1^n - 1)T_1^{-n}z_1^{-n}}{\prod_{i=1}^k (1-Q_i)(1-T_i^{-1})}
$$
$$
[V(z_k; T_k, T_k^{-1}, 1, Q_kT_k^{-1}), 
[\ldots, [V(z_2; T_2, T_2^{-1}, 1, Q_2T_2^{-1}), 
$$
$$
V(z_1; T_1, T_1^{-1}, 1, Q_1T_1^{-1})] \ldots ].    
$$
Finally, applying Lemma~\ref{CommuBwtV} completes the proof of our lemma.
\end{proof}

Our next result determines the leading term of $\fa_{-n}^{(k)}$ 
where $n > 0$ and $k \ge 1$. 

\begin{proposition}  \label{Lma_EquivHigherDeriv}
Let $n > 0$ and $k \ge 1$. Then, $\fa_{-n}^{(k)}$ is equal to
$$
n^k k! \cdot \sum_{\ell(\la) = k + 1, \, |\la| = -n} (-1)^{\ell(\la^+)} 
(t_1t_2)^{\ell(\la^-)-1} \frac{\fa_\la}{\la^!} 
+ \sum_{\ell(\la) < k + 1, \, |\la| = -n} f^{(k)}_\la \frac{\fa_\la}{\la^!}
$$
where $f^{(k)}_\la \in \Q(t_1, t_2)$, and $\la^\pm$ denote 
the positive and negative parts of $\la$.
\end{proposition}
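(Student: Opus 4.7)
I plan to prove the proposition by induction on $k \ge 1$, exploiting the fact that $[\fG_1(t_1,t_2), \cdot\,]$ is a derivation on the algebra generated by the Heisenberg operators. For the base case $k=1$, Lemma~\ref{BB02170918Z} with $n$ replaced by $-n$ exhibits $\fa_{-n}^{(1)}$ as the length-$2$ sum $(-n/2)\sum_{i+j=-n}(-t_1t_2)^{\epsilon(i,j)}\fa_i\fa_j$ plus a length-$1$ remainder. The length-$2$ part produces exactly two kinds of generalized partitions $\la$ of $-n$ of length $2$: those with both parts negative, for which $\epsilon=1$ and the coefficient becomes $(t_1t_2)^{\ell(\la^-)-1}=(t_1t_2)^1$ with sign $(-1)^{\ell(\la^+)}=1$; and those of mixed signs with $\la_a+\la_b=-n<0$, for which $\epsilon=0$, yielding $(t_1t_2)^0$ with sign $(-1)^{1}=-1$. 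After accounting for multiplicities these match the claimed leading coefficient $n\cdot 1!\cdot c_\la$ with $c_\la=(-1)^{\ell(\la^+)}(t_1t_2)^{\ell(\la^-)-1}$.

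For the inductive step, write the leading part of $\fa_{-n}^{(k-1)}$ as
\[
L_{k-1} \;=\; n^{k-1}(k-1)! \sum_{\ell(\mu)=k,\,|\mu|=-n} \frac{c_\mu}{\mu^!}\, \fa_\mu,
\]
with $c_\mu$ as above. Since $[\fG_1,\cdot\,]$ is a derivation,
\[
[\fG_1,\fa_\mu] \;=\; \sum_{l=1}^{k} \fa_{\mu_1}\cdots\fa_{\mu_{l-1}}\,\fa_{\mu_l}^{(1)}\,\fa_{\mu_{l+1}}\cdots\fa_{\mu_k},
\]
and the length-$2$ leading part of each $\fa_{\mu_l}^{(1)}$ (from Lemma~\ref{BB02170918Z}) is the only piece that produces a length-$(k+1)$ output. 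The length-$1$ remainders inside each $\fa_{\mu_l}^{(1)}$, the reorderings of the newly inserted $\fa_i\fa_j$ past neighbouring $\fa_{\mu_s}$'s via \eqref{Intro_TT09110234X}, and $[\fG_1,\cdot\,]$ applied to the length-$(<k)$ tail of $\fa_{-n}^{(k-1)}$ all yield operators of length $\le k$, hence fall into the unspecified lower-order sum of the claim.

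The combinatorial heart of the argument is then to compute, modulo lower length, the coefficient of $\fa_\la/\la^!$ for fixed $\la$ with $\ell(\la)=k+1$ and $|\la|=-n$ inside
\[
n^{k-1}(k-1)!\sum_{\mu}\frac{c_\mu}{\mu^!}\sum_{l}\frac{\mu_l}{2}\sum_{i+j=\mu_l}(-t_1t_2)^{\epsilon(i,j)}\,\fa_{(\mu_1,\ldots,\mu_{l-1},i,j,\mu_{l+1},\ldots)}.
\]
I would parameterize the contributing data by an unordered pair of positions $\{a,b\}$ in $\la$: such a pair determines $\mu$ (merge $\la_a,\la_b$ into the single part $\la_a+\la_b$), the index $l$ of the merged part in $\mu$, and the two orderings $(i,j)=(\la_a,\la_b),(\la_b,\la_a)$ (collapsed to one when $\la_a=\la_b$). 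A case split on the signs of $\la_a,\la_b$ and of $\la_a+\la_b$ then shows that, after the multiplicity ratio $\mu^!/\la^!$ cancels against the multiplicity $m_{\la_a+\la_b}^\mu$ of the merged part in $\mu$, the net contribution from the pair $\{a,b\}$ collapses uniformly to $-c_\la(\la_a+\la_b)$; the degenerate case $\la_a+\la_b=0$ (where $\mu$ would have a forbidden zero part) is absorbed automatically since $\la_a+\la_b=0$ kills the contribution. Using $\sum_a \la_a=-n$,
\[
\sum_{\{a,b\}:\,a<b}(\la_a+\la_b) \;=\; k\sum_{a=1}^{k+1}\la_a \;=\; -kn,
\]
so the coefficient of $\fa_\la/\la^!$ becomes $n^{k-1}(k-1)!\cdot(-c_\la)\cdot(-kn)=n^k k!\,c_\la$, as required.

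The main obstacle will be the sign-and-weight bookkeeping in this case analysis: one must check, in each of the three non-degenerate subcases (both parts negative; both positive; mixed with nonzero sum), that the quantity $c_\mu\cdot(-t_1t_2)^{\epsilon(\la_a,\la_b)}$ together with the multiplicity factors from $\mu^!/\la^!$ and $m_{\la_a+\la_b}^\mu$ indeed collapses to the single expression $-c_\la\cdot(\la_a+\la_b)$, even though $\ell(\mu^+)$ and $\ell(\mu^-)$ shift by different amounts relative to $\ell(\la^\pm)$ in each subcase.
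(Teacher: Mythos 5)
Your proof is correct, but it takes a genuinely different route from the paper's. The paper first derives the closed vertex-operator expression for $\fa_{-n}^{(k)}$ (Theorem~\ref{EquivHigherDeriv}), then expands all factors in powers of $s_1,\ldots,s_k$ to show that only generalized partitions with $\ell(\la)\le k+1$ and $|\la|=-n$ survive and that the length-$(k+1)$ coefficient has the exact shape $(-1)^{\ell(\la^+)}(t_1t_2)^{\ell(\la^-)-1}f^{(k)}_{0,\la}$ with $f^{(k)}_{0,\la}$ a rational \emph{number}; only then does it run an induction on $k$, specialized at $t_1=-1$, $t_2=1$ (where Lemma~\ref{BB02170918Z} degenerates to $\fa_m'=\tfrac{m}{2}\sum_{i+j=m}\fa_i\fa_j$ and all sign/weight bookkeeping disappears), citing \cite[Lemma~3.29]{Qin1} to get $f^{(k)}_{0,\la}=n^kk!$. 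You instead run a single induction with general $t_1,t_2$, using only Lemma~\ref{BB02170918Z} and the derivation property of $[\fG_1(t_1,t_2),\cdot\,]$; the price is the four-way case analysis you flag as the main obstacle, but it does close: $\epsilon(\la_a,\la_b)=1$ exactly when splitting the merged part creates an extra negative part (so $c_\la=t_1t_2\,c_\mu$ and $c_\mu\cdot(-t_1t_2)=-c_\la$), while $\epsilon=0$ exactly when it creates an extra positive part (so $c_\la=-c_\mu$ and $c_\mu\cdot 1=-c_\la$), and the degenerate pair with $\la_a+\la_b=0$ is killed by the prefactor $\mu_l$ in Lemma~\ref{BB02170918Z}; the identity $\sum_{a<b}(\la_a+\la_b)=k\,|\la|=-kn$ then gives $n^{k-1}(k-1)!\cdot(-c_\la)\cdot(-kn)=n^kk!\,c_\la$. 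Your route is more self-contained, bypassing Theorem~\ref{EquivHigherDeriv} and the external reference entirely, at the cost of carrying the $(t_1,t_2)$-dependent sign analysis through the induction; the paper's route gets the structural statement (which partitions occur, and the $(t_1,t_2)$-shape of the top coefficient) for free from the vertex-operator formula and only needs the easy specialized induction for the numerical constant.
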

\begin{proof}
We will compute the terms in the expression of $\fa_{-n}^{(k)}$ from 
Theorem~\ref{EquivHigherDeriv}. Recall that $Q_i = e^{\alpha s_i}$ with 
$\alpha = -t_1/t_2$ and $T_i = e^{s_i}$. We get
\begin{eqnarray}   \label{TT11251039Z}
& &\frac{(Q_1^n - 1)T_1^{-n}z_1^{-n}}{\prod_{i=1}^k (1-Q_i)(1-T_i^{-1})}  
     \nonumber   \\
&=&\frac{(-1)^knz_1^{-n}}{\alpha^{k-1} s_1 \prod_{i=2}^k s_i^2} \cdot 
     \left (1 + \frac12 (\alpha - 2)ns_1 
     - \frac12(\alpha -1)\sum_{i=1}^k s_i \right.     \nonumber   \\
& & + \frac16 (\alpha^2 - 3\alpha + 3)n^2s_1^2 
     - \frac14 (\alpha - 2)(\alpha - 1)ns_1 \sum_{i=1}^k s_i   \nonumber   \\
& &\left. - \frac{1}{24} (\alpha^2+1) \sum_{i=1}^k s_i^2 + \frac18 (\alpha -1)^2 
     \left (\sum_{i=1}^k s_i \right)^2 + O(3) \right )
\end{eqnarray}
where $O(n)$ denotes terms whose combined degrees in $s_1, \ldots, s_k$ are 
at least $n$.

Next, by \eqref{Intro_GammaLz}, 
$\prod_{i=1}^k \Gamma_-(T_iz_i) \Gamma_-(z_i)^{-1}
\cdot \prod_{i=1}^k \Gamma_+(Q_i^{-1}T_iz_i) \Gamma_+(T_iz_i)^{-1}$ equals
\begin{eqnarray} \label{TT12190950Z}
\prod_{r > 0} \exp\left (\frac{\sum_{i=1}^k (T_i^r -1)z_i^r}{r} \fa_{-r} 
     \right ) \cdot \prod_{r > 0} \exp\left (\frac{\sum_{i=1}^k 
     (Q_i^rT_i^{-r} -T_i^{-r})z_i^{-r}}{r} \fa_{r}\right ).
\end{eqnarray}
Since $Q_i = e^{\alpha s_i}$ and $T_i = e^{s_i}$,
the above expression is equal to
$$
\prod_{r > 0} \exp\left (\sum_{i=1}^k s_i\left (1 + \frac12 rs_i + 
     \frac16 r^2s_i^2 + O(s_i^3) \right )z_i^r \fa_{-r} \right )   
$$
$$
\cdot \prod_{r > 0} \exp\left (\sum_{i=1}^k s_i\left (\alpha + 
     \frac12(\alpha^2 - 2\alpha)rs_i + \frac16(\alpha^3 - 3\alpha^2 
     + 3\alpha)r^2s_i^2 + O(s_i^3)\right )z_i^{-r} \fa_{r} \right ).
$$
Thus, $\prod_{i=1}^k \Gamma_-(T_iz_i) \Gamma_-(z_i)^{-1}
\cdot \prod_{i=1}^k \Gamma_+(Q_i^{-1}T_iz_i) \Gamma_+(T_iz_i)^{-1}$ is equal to
\begin{eqnarray} \label{TT11250422X}
& &\sum_\la \frac{\fa_\la}{\la^!} \cdot \alpha^{\ell(\la^+)}   
     \cdot \left ( \prod_{r > 0} \left (\left (
     \sum_{i=1}^k s_i z_i^r\right)^{m_{-r}} \right. \right.   \nonumber  \\
& &+ \frac{rm_{-r}}{2} \left (\sum_{i=1}^k s_i z_i^r\right)^{m_{-r}-1} \cdot
     \sum_{i=1}^k\left (s_i^2 + \frac13 rs_i^3 \right )z_i^r   \nonumber   \\
& &\left. + \frac{r^2}{4} \binom{m_{-r}}{2} 
     \left (\sum_{i=1}^k s_i z_i^r\right)^{m_{-r}-2} \cdot
     \left (\sum_{i=1}^k s_i^2z_i^r \right )^2 \right )  
     \cdot \prod_{r > 0} \left (\left (\sum_{i=1}^k s_i z_i^{-r} \right)^{m_{r}}  
     \right.   \nonumber   \\
& &+ \frac{rm_{r}}{2} \left (\sum_{i=1}^k s_i z_i^{-r} \right)^{m_{r}-1} \cdot
     \sum_{i=1}^k\left ((\alpha - 2)s_i^2 
     + \frac{\alpha^2-3\alpha + 3}{3} rs_i^3 \right )z_i^{-r}   \nonumber   \\
& &\left. \left. + \frac{r^2}{4} \binom{m_{r}}{2} 
     \left (\sum_{i=1}^k s_i z_i^{-r}\right)^{m_{r}-2} \cdot
     \left (\sum_{i=1}^k (\alpha - 2)s_i^2z_i^{-r} \right )^2 \right )
     + O(\ell(\la) + 3) \right )
\end{eqnarray}
where $\la$ denotes a generalized partition whose multiplicity of part $n$ 
is $m_n$.

To calculate $f_{i, j}$ with $i \ne j$, we put $z_{i, j} = z_iz_j^{-1}$ 
for simplicity. By \eqref{EquivHigherDeriv.0},
\begin{eqnarray} \label{TT11250424X.1}
  f_{i, j} 
= \frac{1-e^{s_i-s_j}z_{i, j}}{1-e^{s_i+(\alpha-1)s_j}z_{i, j}} \cdot
  \frac{1-e^{(\alpha-1)s_j}z_{i, j}}{1-e^{-s_j}z_{i, j}}.
\end{eqnarray}
The numerator $(1-e^{s_i-s_j}z_{i, j})(1-e^{(\alpha-1)s_j}z_{i, j})$ is equal to
\begin{eqnarray} \label{TT11250424X.2}
& &1 - e^{s_i-s_j}z_{i, j} - e^{(\alpha-1)s_j}z_{i, j} + 
     e^{s_i+(\alpha-2)s_j}z_{i, j}^2    \nonumber   \\
&=&(1 - z_{i, j})^2 - \sum_{\ell = 1}^4 \frac{1}{\ell!}(s_i-s_j)^\ell z_{i, j}
     - \sum_{\ell = 1}^4 \frac{1}{\ell!}((\alpha-1)s_j)^\ell z_{i, j}  
     \nonumber   \\
& &\qquad + \, \sum_{\ell = 1}^4 
     \frac{1}{\ell!}(s_i+(\alpha-2)s_j)^\ell z_{i, j}^2 + O(5).
\end{eqnarray}
For the denominators in \eqref{TT11250424X.1}, 
$1/(1-e^{s_i+(\alpha-1)s_j}z_{i, j})$ is equal to
$$
\frac{1}{1-z_{i,j}} + \frac{z_{i,j}}{(1-z_{i,j})^2} (s_i+(\alpha-1)s_j)
   + \frac1{2!} \frac{z_{i,j} + z_{i,j}^2}{(1-z_{i,j})^3} (s_i+(\alpha-1)s_j)^2
$$
$$
+ \frac1{3!} \frac{z_{i,j} + 4z_{i,j}^2+z_{i,j}^3}{(1-z_{i,j})^4} 
    (s_i+(\alpha-1)s_j)^3
$$
\begin{eqnarray}   \label{TT11250424X.3}
+ \frac1{4!} \frac{z_{i,j} + 11 z_{i,j}^2 + 11z_{i,j}^3 + z_{i,j}^4}{(1-z_{i,j})^5} 
    (s_i+(\alpha-1)s_j)^4 + O(5).
\end{eqnarray}
Similarly, we conclude that $1/(1-e^{-s_j}z_{i, j})$ is equal to
$$
\frac{1}{1-z_{i,j}} + \frac{z_{i,j}}{(1-z_{i,j})^2} (-s_j)
   + \frac1{2!} \frac{z_{i,j} + z_{i,j}^2}{(1-z_{i,j})^3} (-s_j)^2
$$
\begin{eqnarray}   \label{TT11250424X.4}
+ \frac1{3!} \frac{z_{i,j} + 4z_{i,j}^2+z_{i,j}^3}{(1-z_{i,j})^4} 
    (-s_j)^3
+ \frac1{4!} \frac{z_{i,j} + 11 z_{i,j}^2 + 11z_{i,j}^3 + z_{i,j}^4}{(1-z_{i,j})^5} 
    (-s_j)^4 + O(5).
\end{eqnarray}
Combining \eqref{TT11250424X.1}, \eqref{TT11250424X.2}, 
\eqref{TT11250424X.3} and \eqref{TT11250424X.4}, 
we see that $f_{i, j}$ is equal to
$$
1 + \frac{\alpha z_{i,j}}{(1-z_{i,j})^2} s_is_j 
+ \frac{\alpha z_{i,j}(1+z_{i,j})}{2(1-z_{i,j})^3} s_i^2s_j 
+ \frac{\alpha(\alpha-2) z_{i,j}(1+z_{i,j})}{2(1-z_{i,j})^3} s_is_j^2
$$
$$
+ \frac{\alpha z_{i,j}(1+4z_{i,j}+z_{i,j}^2)}{6(1-z_{i,j})^4} s_i^3s_j 
+ \frac{\alpha z_{i,j}(-2(1+4z_{i,j}+z_{i,j}^2) + \alpha(1+6z_{i,j}+z_{i,j}^2))}{4(1-z_{i,j})^4} s_i^2s_j^2
$$
\begin{eqnarray}   \label{TT11250424X.5}
+ \frac{\alpha(\alpha^2-3\alpha+3) z_{i,j}(1+4z_{i,j}+z_{i,j}^2)}
  {6(1-z_{i,j})^4} s_is_j^3 + \alpha s_is_j O(3).
\end{eqnarray}
noting that $f_{i, j} -1$ is divisible by $\alpha s_is_j$. It follows that 
\begin{eqnarray}   \label{TT11250424X.6}
& &\prod_{j=2}^k \left (\prod_{i=1}^{j-1}f_{i,j} 
   - \prod_{i=1}^{j-1}f_{j, i} \right )  \nonumber   \\
&=&\alpha^{k-1} \prod_{j=2}^k \sum_{i=1}^{j-1} s_is_j
   \left (\frac{z_{i,j}}{(1-z_{i,j})^2}  
   + \frac{z_{i,j}(1+z_{i,j})}{2(1-z_{i,j})^3} s_i 
   + \frac{(\alpha-2) z_{i,j}(1+z_{i,j})}{2(1-z_{i,j})^3} s_j
   \right.   \nonumber  \\
& &\left. - \frac{z_{j, i}}{(1-z_{j, i})^2} 
   - \frac{z_{j, i}(1+z_{j, i})}{2(1-z_{j, i})^3} s_j 
   - \frac{(\alpha-2) z_{j, i}(1+z_{j, i})}{2(1-z_{j, i})^3} s_i 
    + O(2) \right ). 
\end{eqnarray}

We see from Theorem~\ref{EquivHigherDeriv}, \eqref{TT11251039Z},
\eqref{TT11250422X} and \eqref{TT11250424X.6} that 
$\fa_{-n}^{(k)}$ is equal to
$$
(-1)^kt_2^k t_1^{\delta(\cdot)-1} \cdot \Coe_{s_1 \cdots s_k z_1^0 \cdots z_k^0} 
   \frac{(-1)^knz_1^{-n}}{\alpha^{k-1} s_1 \prod_{i=2}^k s_i^2} 
$$
$$
\cdot \left (1 + \frac12 (\alpha - 2)ns_1 
- \frac12(\alpha -1)\sum_{i=1}^k s_i + O(2) \right )
$$
$$
\cdot 
\alpha^{k-1} \prod_{j=2}^k \sum_{i=1}^{j-1} s_is_j
   \left (\frac{z_{i,j}}{(1-z_{i,j})^2}  
   + \frac{z_{i,j}(1+z_{i,j})}{2(1-z_{i,j})^3} s_i 
   + \frac{(\alpha-2) z_{i,j}(1+z_{i,j})}{2(1-z_{i,j})^3} s_j
   \right.
$$
$$
\left. - \frac{z_{j, i}}{(1-z_{j, i})^2} 
   - \frac{z_{j, i}(1+z_{j, i})}{2(1-z_{j, i})^3} s_j 
   - \frac{(\alpha-2) z_{j, i}(1+z_{j, i})}{2(1-z_{j, i})^3} s_i 
    + O(2) \right )
$$
$$
\cdot \sum_\la \frac{\fa_\la}{\la^!} \cdot \alpha^{\ell(\la^+)}   
     \cdot \left ( \prod_{r > 0} \left (\left (
     \sum_{i=1}^k s_i z_i^r\right)^{m_{-r}}    
+ \frac{rm_{-r}}{2} \left (\sum_{i=1}^k s_i z_i^r\right)^{m_{-r}-1} \cdot
     \sum_{i=1}^k s_i^2 z_i^r \right ) \right.      
$$
$$
\left. \cdot \prod_{r > 0} \left (\left (\sum_{i=1}^k s_i z_i^{-r} \right)^{m_{r}}       
+ \frac{rm_{r}}{2} \left (\sum_{i=1}^k s_i z_i^{-r} \right)^{m_{r}-1} 
     \sum_{i=1}^k (\alpha - 2)s_i^2z_i^{-r} \right )
     +O(\ell(\la) + 2) \right )
$$
where $z_{i, j} = z_iz_j^{-1}$ for $i \ne j$, and $\la$ denotes 
a generalized partition whose multiplicity of part $n$ is $m_n$. 
Note that for a fixed generalized partition $\la$, 
the sum of the exponents of $s_1, \ldots, s_k$ 
in the above expression is at least 
$$
-1 - 2(k-1) + 2(k-1) + \ell(\la) = \ell(\la) - 1.
$$
Since $s_1 \cdots s_k$ has degree $k$, the generalized partitions $\la$ with 
$\ell(\la) \ge k + 2$ do not contribute to $\fa_{-n}^{(k)}$. 
Similarly, by considering the exponents of $z_1, \ldots, z_k$,
we see that if $|\la| \ne -n$, then $\la$ does not contribute to $\fa_{-n}^{(k)}$. 
Thus, $\fa_{-n}^{(k)}$ is equal to
$$
\sum_{\ell(\la) \le k + 1, \, |\la| = -n} \frac{\fa_\la}{\la^!} \cdot 
(-1)^{\ell(\la^+)} t_1^{\ell(\la^-)-1} t_2^{k - \ell(\la^+)}      
\cdot \Coe_{s_1 \cdots s_k z_1^0 \cdots z_k^0} 
   \frac{nz_1^{-n}}{\prod_{i=1}^k s_i} 
$$
$$
\cdot \left (1 + \frac12 (\alpha - 2)ns_1 
- \frac12(\alpha -1)\sum_{i=1}^k s_i + O(2) \right )
$$
$$
\cdot \prod_{j=2}^k \sum_{i=1}^{j-1} s_i
   \left (\frac{z_{i,j}}{(1-z_{i,j})^2}  
   + \frac{z_{i,j}(1+z_{i,j})}{2(1-z_{i,j})^3} s_i 
   + \frac{(\alpha-2) z_{i,j}(1+z_{i,j})}{2(1-z_{i,j})^3} s_j
   \right.
$$
$$
\left. - \frac{z_{j, i}}{(1-z_{j, i})^2} 
   - \frac{z_{j, i}(1+z_{j, i})}{2(1-z_{j, i})^3} s_j 
   - \frac{(\alpha-2) z_{j, i}(1+z_{j, i})}{2(1-z_{j, i})^3} s_i 
    + O(2) \right )
$$
$$
\cdot \left (O(\ell(\la) + 2) + \prod_{r > 0} \left (\left (
     \sum_{i=1}^k s_i z_i^r\right)^{m_{-r}}    
+ \frac{rm_{-r}}{2} \left (\sum_{i=1}^k s_i z_i^r\right)^{m_{-r}-1} \cdot
     \sum_{i=1}^k s_i^2 z_i^r \right ) \right.      
$$
$$
\left. \cdot \prod_{r > 0} \left (\left (\sum_{i=1}^k s_i z_i^{-r} \right)^{m_{r}}       
+ \frac{rm_{r}}{2} \left (\sum_{i=1}^k s_i z_i^{-r} \right)^{m_{r}-1} \cdot
     \sum_{i=1}^k (\alpha - 2)s_i^2z_i^{-r} \right )
     \right ).
$$
It follows that $\fa_{-n}^{(k)}$ is of the form
\begin{eqnarray}   \label{fk0Lambda}
\sum_{\ell(\la) = k + 1, \, |\la| = -n} (-1)^{\ell(\la^+)} 
(t_1t_2)^{\ell(\la^-)-1} f^{(k)}_{0, \la} \frac{\fa_\la}{\la^!} 
+ \sum_{\ell(\la) < k + 1, \, |\la| = -n} f^{(k)}_\la \frac{\fa_\la}{\la^!}
\end{eqnarray}
where $f^{(k)}_\la \in \Q(t_1, t_2)$, and $f^{(k)}_{0, \la} \in \Q$ with 
$\ell(\la) = k + 1$ and $|\la| = -n$ is equal to
$$
\Coe_{s_1 \cdots s_k z_1^0 \cdots z_k^0} 
   \frac{nz_1^{-n}}{\prod_{i=1}^k s_i} 
\, \prod_{j=2}^k \sum_{i=1}^{j-1} s_i
  \left (\frac{z_{i,j}}{(1-z_{i,j})^2} - \frac{z_{j, i}}{(1-z_{j, i})^2} \right )
  \cdot \prod_{r \ne 0} \left (\sum_{i=1}^k s_i z_i^r\right)^{m_{-r}}. 
$$

It is very difficult to evaluate $f^{(k)}_{0, \la}$ directly 
via the previous line. Alternatively, by using induction on $k$, 
we now prove that if $\ell(\la) = k + 1$ and $|\la| = -n$, then
\begin{eqnarray}   \label{fk0Lambda.1} 
f^{(k)}_{0, \la} = n^k k!.
\end{eqnarray}
When $k = 0$, we have $\fa_{-n}^{(0)} = \fa_{-n}$.
Comparing with \eqref{fk0Lambda}, we get $f^{(k)}_{0, \la} = 1$ 
for the partition $\la = (-n)$. So \eqref{fk0Lambda.1} is true for $k = 0$.
Now assume that \eqref{fk0Lambda.1} holds for some $k \ge 0$. 
We will show that \eqref{fk0Lambda.1} holds for $k+1$ as well. Indeed, 
by \eqref{fk0Lambda} and our inductive assumption, 
$\fa_{-n}^{(k)}$ is equal to
$$
\sum_{\ell(\la) = k + 1, \, |\la| = -n} (-1)^{\ell(\la^+)} 
(t_1t_2)^{\ell(\la^-)-1} n^k k! \frac{\fa_\la}{\la^!} + 
\sum_{\ell(\la) < k + 1, \, |\la| = -n} f^{(k)}_\la \frac{\fa_\la}{\la^!}.
$$
Putting $t_1=-1$ and $t_2 = 1$,
since $\fa_{-n}^{(k+1)} = \big (\fa_{-n}^{(k)} \big )'$, we see that
$\fa_{-n}^{(k+1)}$ is equal to
\begin{eqnarray}   \label{fk0Lambda.2}
& &\sum_{\ell(\la) = k + 1, \, |\la| = -n} 
   (-n)^k k! \frac{(\fa_\la)'}{\la^!} 
   + \sum_{\ell(\la) < k + 1, \, |\la| = -n} 
   f^{(k)}_\la \frac{(\fa_\la)'}{\la^!}   \nonumber  \\
&=&\sum_{\ell(\mu) = k + 2, \, |\mu| = -n} (-1)^{k+1} 
   f^{(k+1)}_{0, \mu} \frac{\fa_\mu}{\mu^!} 
   + \sum_{\ell(\mu) < k + 2, \, |\mu| = -n} 
   f^{(k+1)}_\mu \frac{\fa_\mu}{\mu^!}
\end{eqnarray}
where by abusing notations, we let $f^{(k)}_\la = f^{(k)}_\la(-1, 1)$ and 
$f^{(k+1)}_\mu = f^{(k+1)}_\mu(-1, 1)$. By \eqref{BB02170918Z.0},
$\fa_m' = {m}/{2} \cdot \sum_{i+j=m} \fa_{i}\fa_{j}$. 
Using the arguments in the proof of \cite[Lemma~3.29]{Qin1}, 
we conclude that $f^{(k+1)}_{0, \mu} = n^{k+1}(k+1)!$ 
whenever $\ell(\mu) = k + 2$ and $|\mu| = -n$. 
Hence \eqref{fk0Lambda.1} is true for $k+1$.

Finally, combining \eqref{fk0Lambda} and \eqref{fk0Lambda.1} completes 
the proof of our proposition. 
\end{proof}

\end{document}